\DeclareMathAlphabet{\mathpzc}{OT1}{pzc}{m}{it}
\newcommand{\po}{\ar@{}[dr]|{\text{\pigpenfont R}}}
\newcommand{\pb}{\ar@{}[dr]|{\text{\pigpenfont J}}}
\mathchardef\mhyphen="2D 
\DeclareRobustCommand{\hvec}[1]{{\mathpalette\hvec@{#1}}}
\newcommand{\hvec@}[2]{
  \vbox{\offinterlineskip
    \ialign{
      \hfil##\hfil\cr
      $\m@th#1{}_{\rightharpoonup}$\kern-\scriptspace\cr
      $\m@th#1#2$\cr
    }
  }
}
\newlength\myheight
\newlength\mydepth
\settototalheight\myheight{Xygp}
\DeclareMathOperator{\Aut}{Aut}
\newcommand{\Tot}{\mathrm{Tot}}
\newcommand{\Sym}{\mathrm{Sym}}
\newcommand{\Cptd}{\operatorname{Cpt}\partial}
\newcommand{\Cpt}{\operatorname{Cpt}}
\newcommand{\triv}{\mathrm{triv}}
\newcommand{\st}{\mathrm{st}}
\newcommand{\hocolim}{\mathrm{hocolim}}
\newcommand{\colim}{\mathrm{colim}}
\newcommand{\hofib}{\mathrm{hofib}}
\DeclareMathOperator{\Sing}{\operatorname{Sing}}
\DeclareMathOperator{\loc}{loc}
\newcommand{\pt}{\operatorname{pt}}
\def\bB{\mathbb{B}}
\def\bC{\mathbb{C}}
\def\bD{\mathbb{D}}
\def\bE{\mathbb{E}}
\def\bF{\mathbb{F}}
\def\bP{\mathbb{P}}
\def\bR{\mathbb{R}}
\def\bS{\mathbb{S}}
\def\bU{\mathbb{U}}
\def\bV{\mathbb{V}}
\def\bZ{\mathbb{Z}}
\def\BB{\prescript{}2B}
\def\ee{\prescript{}2e}
\def\oBB{\prescript{}2{\overline B}}
\def\XX{\prescript{}2\Xi}
\DeclareMathOperator{\Thom}{\mathsf{Thom}}
\DeclareMathOperator{\Base}{\mathsf{Base}}
\DeclareMathOperator{\sSet}{\mathsf{sSet}}
\DeclareMathOperator{\Vect}{\mathsf{Vect}}
\newcommand{\eps}{\varepsilon}
\newcommand{\cA}{\mathcal{A}}
\newcommand{\cB}{\mathcal{B}}
\newcommand{\cC}{\mathcal{C}}
\newcommand{\cD}{\mathcal{D}}
\newcommand{\cE}{\mathcal{E}}
\newcommand{\cF}{\mathcal{F}}
\newcommand{\cG}{\mathcal{G}}
\newcommand{\cH}{\mathcal{H}}
\newcommand{\cI}{\mathcal{I}}
\newcommand{\cL}{\mathcal{L}}
\newcommand{\cM}{\mathcal{M}}
\newcommand{\cN}{\mathcal{N}}
\newcommand{\cO}{\mathcal{O}}
\newcommand{\cP}{\mathcal{P}}
\newcommand{\cQ}{\mathcal{Q}}
\newcommand{\cR}{\mathcal{R}}
\newcommand{\cS}{\mathcal{S}}
\newcommand{\cV}{\mathcal{V}}
\newcommand{\cW}{\mathcal{W}}
\newcommand{\cX}{\mathcal{X}}
\newcommand{\cY}{\mathcal{Y}}
\def\bZ{\mathbb{Z}}
\tikzset{
    ncbar angle/.initial=90,
    ncbar/.style={
        to path=(\tikztostart)
        -- ($(\tikztostart)!#1!\pgfkeysvalueof{/tikz/ncbar angle}:(\tikztotarget)$)
        -- ($(\tikztotarget)!($(\tikztostart)!#1!\pgfkeysvalueof{/tikz/ncbar angle}:(\tikztotarget)$)!\pgfkeysvalueof{/tikz/ncbar angle}:(\tikztostart)$)
        -- (\tikztotarget)
    },
    ncbar/.default=0.5cm,
}
\tikzset{square left brace/.style={ncbar=0.5cm}}
\tikzset{square right brace/.style={ncbar=-0.5cm}}
\tikzset{round left paren/.style={ncbar=0.5cm,out=120,in=-120}}
\tikzset{round right paren/.style={ncbar=0.5cm,out=60,in=-60}}
\renewcommand{\cong}{\simeq}
\numberwithin{equation}{section} 
\begin{document}

\title{Open-closed maps and spectral local systems}

\author[]{Noah Porcelli, Ivan Smith}
\thanks{The Clynelish distillery.}
\date{September 2024. Unauthorised readers will be pestered by pufflings.}

\address{Noah Porcelli, Max Planck Institute for Mathematics, Vivatsgasse 7, 53111 Bonn, Germany}
\address{Ivan Smith, Centre for Mathematical Sciences, University of Cambridge, Wilberforce Road, Cambridge CB3 0WB, U.K.}


\begin{abstract} {\sc Abstract:}
Let $X$ be a graded Liouville domain. Fix a pair of infinite loop spaces $\Psi = (\Theta \to \Phi)$ living over $(BO \to BU)$. This determines a spectral Fukaya category $\scrF(X;\Psi)$ whenever $TX$ lifts to $\Phi$, containing closed exact Lagrangians $L$ for which $TL$ lifts compatibly to $\Theta$; and by Bott periodicity and index theory, a Thom spectrum $R$ with bordism theory $R_*$.

This paper has two main goals: we incorporate rank one spectral local systems $\xi: L \to BGL_1(R)$ into the spectral category; and we prove that the bordism class $[(L,\xi)]$ defined by the open-closed map differs from the class $[L]$ by a multiplicative two-torsion element in $R^0(L)^{\times}$ determined by an action of the stable homotopy class of the Hopf map $\eta \in \pi_1^{st}$ on $\xi$.   Methods include a twisting construction associating flow categories to spectral local systems, and a model for the open-closed map incorporating Schlichtkrull's construction of the trace map $BGL_1(R) \subseteq K(R) \to R$.

The companion paper \cite{PS4} shows that (for Lagrangians which themselves admit spectral lifts) one can lift quasi-isomorphisms from $\bZ$ to $\Psi$ at the cost of introducing rank one local systems. Together with the open-closed computation given here, this gives an essentially complete picture of the bordism-theoretic consequences of quasi-isomorphism in the classical exact Fukaya category.
\end{abstract}

\maketitle
\thispagestyle{empty}
\setlength{\cftbeforesubsecskip}{-2pt}
\tableofcontents

\section{Introduction}

  \subsection{Context}

Throughout symplectic topology and Floer theory, one studies Lagrangian submanifolds equipped with local systems, usually of rank one. Allowing local systems is essential from many viewpoints: rank one $\bZ$-local systems (which are closely related to $Spin$ structures) allow one to incorporate changes in orientation signs; rank one $\bC^*$-local systems underpin a deep relation to cluster theory; Novikov-unitary local systems are at the heart of the `SYZ viewpoint' on mirror symmetry, and the construction of mirror spaces.  

A well-behaved (`unobstructed') Lagrangian submanifold has a fundamental class, in general living in quantum homology,   defined via the open-closed map as a count of perturbed holomorphic discs with boundary on $L$.  An important fact is that in the exact -- which implies unobstructed -- case, the fundamental class  of a Lagrangian equipped with a local system is the same as its classical fundamental class with no local system (this changes, in general, in the presence of holomorphic discs).

This paper has two goals:
\begin{enumerate}
    \item We incorporate rank one spectral local systems into exact spectral Floer theory; more precisely, we show how to `twist' the Floer flow category associated to $L$ and $K$ when either carries a spectral local system.  
    \item We compute the corresponding open-closed bordism-valued fundamental class. Strikingly this \emph{does} now depend on the choice of local system, with an essentially universal dependence governed by the stable homotopy class of the Hopf map $\eta \in \pi_1^{st}$. It follows that the dependence of the bordism class on the local system is concentrated at the prime $2$.
\end{enumerate}

The appearance of $\eta$ is not just a curiosity. Numerous applications of this result to classical symplectic topology questions are given in \cite{PS4}; in particular, we show that the normal invariant of a nearby Lagrangian factors through $\eta$ (indeed, we obtain analogous constraints which apply even outside the case of cotangent bundles).

   Our restriction to rank one spectral local systems is motivated by both the ubiquity of rank one local systems in classical Floer theory, and the main theorem of the sequel \cite{PS4}, which shows that if $L$ and $K$ themselves admit lifts to the spectral Fukaya category, then a quasi-isomorphism of $L$ and $K$ over $\bZ$ lifts to a quasi-isomorphism of $(L,\xi)$ and $K$ over $\Psi$, for some rank one spectral local system $\xi$. 
 Together with the computation of the open-closed map in the presence of the local system given here, this gives essentially complete control of the consequences in  bordism of knowing that two Lagrangians are quasi-isomorphic in the integral Fukaya category. 
 
 \subsection{Results}

A precise statement of our results is most naturally given in the framework of the spectral Fukaya category.  Recall that 
Abouzaid and Blumberg \cite{AB} have constructed a stable infinity-category of framed flow categories, which is equivalent to the category of spectra, with a view to creating `Floer homotopy theory without homotopy theory'. In this worldview, bordism takes centre stage: morphism groups in the spectral Fukaya category are bordism classes of flow modules.  More classical viewpoints on Floer homotopy theory in the vein of work of Cohen, Jones and Segal live on the `other side' of the Pontrjagin-Thom isomorphism. 

 In \cite{PS}, building on work of \cite{Large,FO3:smoothness}, we constructed such a `bordism-flavoured'  spectral Fukaya category for a stably framed Liouville manifold, with objects being compatibly stably framed compact exact Lagrangian submanifolds. The sequel \cite{PS2} constructed an analogous spectral Fukaya category for any tangential structure $\Psi = (\Theta \to \Phi)$ living over $BO \to BU$, thus allowing us to access information contained in more general bordism theories. Here the symplectic manifold $X$ is assumed to have a lift of the classifying map for $TX$ to $\Phi$, and objects of the category are compact exact Lagrangians equipped with lifts of their tangent bundles to $\Theta$. (Both \cite{Large} and \cite{ADP} also construct versions of exact spectral Fukaya categories, but both pass via Cohen-Jones-Segal from bordism to spectra and homotopy types.)

To formulate the main result of this paper requires revisiting the construction of the spectral Fukaya category $\scrF(X;\Psi)$ in the special case where the tangential pair $(\Theta \to \Phi)$ lives over $(BO \to BU)$ as a diagram of infinite loop spaces. The appearance of $\eta$ reflects a trace map from algebraic $K$-theory of a ring spectrum to the original ring spectrum which only exists in the commutative setting.  Indeed, even before computing the associated fundamental class, our way of incorporating spectral local systems into the Fukaya category itself makes essential use of commutativity, and doesn't seem to have any obvious analogue in any of the other current approaches to Floer homotopy theory.

Thus, the version of the Fukaya category we work with in this paper uses \emph{commutative} tangential pairs, which amounts to asking that 
\[
\xymatrix{
\Theta \ar[r] \ar[d] & \Phi \ar[d] \\
BO \ar[r] & BU
}
\]
be a diagram of infinite loop spaces\footnote{In fact, we impose grading and orientation requirements that mean that our tangential structures more naturally live over $BSpin \to BS_{\pm}U$, with the latter being the fibre over $\pm 1$ of the  determinant map; indeed, we work with a slightly more general notion of oriented tangential pair, see  Definition \ref{def: comm tang pair}.}.  Let $F$ be the homotopy fibre of the map $\Theta \to \Phi$. Using real Bott periodicity, we may define a commutative ring spectrum $R_\Psi$, as the Thom spectrum of an index bundle defined by the composition $\Omega F \to \Omega\widetilde{U/O} \xrightarrow{\mathrm{Bott}} BO$, see Section \ref{sec:abstract discs} for a detailed construction. Its base, which we write as $E_\Psi$, is homotopy equivalent to $\Omega F$.

\begin{ex}
    We write $fr$ for the tangential pair $\Psi=(\pt, \pt)$, corresponding to the case of framed symplectic manifold and Lagrangians. In this case, $R_\Psi$ is equivalent to the sphere spectrum. If $\Psi=(\pt, BS_\pm U)$, $R_\Psi$ is the complex cobordism ring spectrum $MU$.
\end{ex}

Similarly to the case of an ordinary ring, $R_\Psi$ has a group of units $GL_1(R_\Psi)$, and we may model $R_\Psi$-local systems over a base $B$ as maps $B \to BGL_1(R_\Psi)$. We call these \emph{$\Psi$-local systems}. Through the Pontrjagin-Thom isomorphism, the Thom ring spectrum $R = R_{\Psi}$ defines an associated bordism theory $\Omega_*^{E_{\Psi}}(\cdot)$. 

To a pair $\Psi=(\Theta, \Phi)$ as above and a Liouville domain $X$ with a lift $\phi: X \to \Phi$ of the classifying map of $TX$, we associate a spectral Fukaya category $\scrF(X; \Psi)$. This has objects pairs $(L, \xi_L)$, where $L$ is a closed exact Lagrangian with a compatible lift of $TL$ to $\Theta$, and $\xi_L$ is a $\Psi$-local system. The morphism groups are linear over the ring $\Omega_*^{E_{\Psi}}$. Our construction here incorporates the infinite loop structure of $\Psi$ in a way that was absent from \cite{PS2}.

\begin{rmk}
    In the ordinary ($\bZ$-linear) Fukaya category, objects are closed exact Lagrangians $L$, equipped with a (relative) Spin structure $\mathfrak{s}$ and a $\bZ$-local system $\xi$. However, the local system is in some sense redundant: any $(L, \mathfrak{s}, \xi)$ is isomorphic to $L$ equipped with a different Spin structure and no local system. This is not the case in our spectral Fukaya category; the incorporation of a local system can't be absorbed by a change in the tangential structure on the underlying Lagrangian brane.
\end{rmk}

Let $\phi$ denote the $\Phi$-orientation on $X$. There is a (open-closed or `$\cO\cC$') bordism theory $\Omega_*^{E_{\Psi},\cO\cC}(X,\phi)$, which is a module over $R^{-*}(X)$  by cap-product, and which is a variant of $\Omega_*^{E_{\Psi}}(X)$; see Section \ref{sec:OC definitions} for the precise definition, and Example \ref{ex:OC bordism maps here} for relations to more familiar theories. This is 
the `correct' place for the open-closed map to land in this setting: for a $\Theta$-oriented Lagrangian $L$, or pair $(L,\xi_L)$ of such a Lagrangian with a spectral local system, the open-closed map defines a class $[(L,\xi_L)] \in \Omega_d^{E_{\Psi},\cO\cC}(X,\phi)$. The definition of this class in principle uses moduli spaces of holomorphic curves, and is not \emph{a priori} manifestly topological. Our main result below gives a topological description of how this class depends on $\xi_L$.

\begin{ex}
    If $\Psi=fr$, $\Omega^{E_\Psi, \cO\cC}_*(X, \phi)$ maps to the framed bordism groups of $X$. If $\Psi = (\pt, BS_\pm U)$, $\Omega^{E_\Psi,\cO\cC}_*(X,\phi)$ maps to the complex cobordism groups of $X$.
\end{ex}

The fact that $R$ is commutative means that the stable homotopy class $\eta \in \pi_1^{st}$ defines  a distinguished subgroup of the group of units $R^0(L)^\times$, comprising classes that factor through a map $\eta^*: BGL_1(R) \to GL_1(R)$ induced by $\eta$. Here we are using that, similarly to the case of ordinary rings, there is an inclusion $GL_1(R) \to R$. Slightly abusively we call this subgroup the `image of $\eta$'; it is composed of multiplicatively two-torsion elements (i.e. they square to 1).

\begin{thm}[{Theorem \ref{thm: tech OC main}}] \label{thm:main2}
    Let $\Psi$ be a commutative tangential pair, and $X$ a Liouville domain with $\Phi$-orientation $\phi$. Let $L \subseteq X$ be a closed exact Lagrangian equipped with a compatible $\Theta$-orientation, and $\xi_L$ a $\Psi$-local system on $L$ (thus $L$ and $(L, \xi_L)$ are objects of the spectral Fukaya category $\scrF(X; \Psi)$). Write $[L], [(L, \xi_L)] \in \Omega_*^{E_\Psi;\cO\cC}(X,\phi)$ for their open-closed fundamental classes.

    Then we have: $[(L, \xi_L)] = [L] \cap [\eta\xi_L]$, where $[\eta\xi_L] \in R^0(L)$ lies in the image of $\eta$.
\end{thm}
The class $[\eta\xi_L]$ is determined by $\xi_L$ explicitly, cf. the full statement in Theorem \ref{thm: tech OC main}.
\begin{cor}\label{2}
With the notation of Theorem \ref{thm:main2}: 
    \begin{enumerate}
        \item The restriction of the class $[\eta\xi_L]$ to any point in $L$ represents the class $+1 \in R^0(\pt)$.

        \item The difference $[(L, \xi_L)]-[L]=0$ after inverting 2, i.e. it is $2^l$-torsion for some $l$. 
        \item If $L$ is a homotopy sphere, $[(L, \xi_L)]-[L]$ is 2-torsion.
    \end{enumerate}
\end{cor}
See Section \ref{proof 2} for the proof of Corollary \ref{2}.

\begin{rmk}\label{rmk: eoirkhgourhg}
    The class $[L] \cap [\eta\xi]$ is in general not equal to $[L]$. Concretely, if $R_\Psi \cong \bS$ and $L=S^n$, $[L, BGL_1(R_\Psi)] \cong \pi_{n-1}\bS$, and if we choose a class $\xi \in \pi_{n-1}\bS$ which is not annihilated by multiplication by $\eta$, we obtain such an example (such classes exist when $(n-1) = 1,2,3,7,8,9,\dots$).

    This is in contrast to the case of (exact) Fukaya categories over $\bZ$, where the image of a Lagrangian brane under the open-closed map is independent of the local system. In light of Theorem \ref{thm:main2}, morally this is because $\eta$ acts trivially on ordinary rings.
\end{rmk}

Theorem \ref{thm:main2} is obtained using work of Schlichtkrull \cite{Schlichtkrull}. Schlichtkrull proved that for a commutative ring spectrum $R$, there is a universal identity relating the trace map on algebraic $K$-theory of $R$, when restricted to rank-one local systems, to the class $\eta$. This further reflects the expected behaviour of the open-closed map as a kind of `trace'. Commutativity is required here even in stating this identity.

\begin{rmk}
    Philosophically $\eta$ should also arise in \cite{AAGCKarxiv} (which, however, is not directly about either open-closed maps or local systems). Theorem \ref{thm:main2} essentially confirms, in our setting, the speculation in that direction given in the Introduction of \cite{AAGCKarxiv}; the result is accessible here because we keep track of more commutativity than  \emph{op. cit.}.  Amongst the applications of the main results given in \cite{PS4} (which also use commutativity in an essential way), those related to normal invariants of nearby Lagrangians in cotangent bundles can be seen as mild refinements of the results of \cite{AAGCKarxiv}, where the refinement arises because $\eta$ is visible rather than in the background.
\end{rmk}

\subsection{Methods of proof}

\subsubsection*{Construction of the Fukaya category}
    To incorporate commutativity of the tangential pair into the Fukaya category, we prove that all relevant moduli spaces $\cM$ admit coherent $\Psi$-orientations, namely a lift of their stable tangent bundle to $E_\Psi$. In \cite{PS2}, we constructed related tangential structures on such moduli spaces, but these were not a priori compatible with the commutativity of the tangential pair: heuristically, the point of view in \cite{PS2} was that breaking of moduli spaces corresponds to concatenation of loops in $\Omega U/O$ whereas the natural way to construct an infinite loop structure on $\Omega U/O$ uses direct sum in $U/O$ and ignores the loop factor. In Section \ref{sec: fuk no loc} we carefully carry out an explicit Eckmann-Hilton-type argument to bridge the gap between these two notions.
    
    Some similar ideas appear in work of Bonciocat \cite{Bonciocat} in the setting of moduli spaces of strips.

 To incorporate $\Psi$-local systems into the spectral Fukaya category, we `twist' the flow category $\cM^{LK}$ associated to a pair of $\Psi$-branes $L, K$ equipped with local systems $\xi_L, \xi_K$ to produce a new flow category $\cM^{\xi_L,\xi_K}$. The data of the local systems is used to construct maps from the moduli spaces $\cM^{LK}_{xy}$ into Thom spaces of finite rank index bundles over spaces of abstract holomorphic caps, and the twisted flow category moduli spaces are given by zero-sets of these maps.

    Central to our technical results (both here and in \cite{PS4}) is the notion of a flow category \emph{over} a manifold $L$ (possibly equipped with an $E_\Psi$-orientation, cf. Definition \ref{defn:E-oriented flow cat}). This consists of a flow category $\cF$, along with maps of spaces $ev: \cF_{xy} \to \Omega L$, compatible with concatenation on both sides. These arise naturally in Floer theory: the flow category $\cM^{LK}$ associated to a pair of Lagrangians has such a map, essentially by restricting a holomorphic strip to one of its boundary components. A similar construction has appeared in \cite{Barraud-Cornea,Abouzaid:based_loops, BDHO}. 
    
There is a category of ($E_{\Psi}$-oriented) flow categories over $L$; in the sequel \cite{PS4} we further exploit these ideas to construct a Viterbo-style restriction functor
    \[
    \scrF(X;\Psi) \longrightarrow {\Flow}^{E_{\Psi}}_{/L}.
    \]

\begin{rmk}
   There is also a truncation of $\Flow_{/L}$ which recovers $C_*(\Omega L)\mhyphen$mod, making contact with well known ideas of  Floer theory with `universal local systems', cf. for instance \cite{BDHO, BDHO2}.
\end{rmk}

\subsubsection*{Algebraic $K$-theory and the Hopf map}
    For the ordinary ($A_\infty$) Fukaya category, the \emph{open-closed} map is a map from Hochschild homology to ordinary homology, $\cO\cC: HH_*^\bZ(\scrF(X; \bZ)) \to H_{*+d}(X)$. From the TFT perspective, this can be viewed as composing the unit morphism of an object with a `co-unit'-type moduli space of caps, as arises in trace maps in more abstract frameworks such as \cite{HSS}. 
    
    More concretely, the ordinary Fukaya category can be enlarged to contain Lagrangians with local systems of arbitrary rank, as is done in \cite{Konstantinov}. We may further restrict to the full subcategory $\scrF^L(X)$ of arbitrary-rank local systems supported on a single Lagrangian $L$. Then the open-closed map $HH_*^\bZ(\scrF^L(X; \bZ)) \to H_{*+d}(X)$ can be seen to be a form of trace map more directly; for example in the construction of the open-closed map with higher-rank $\bZ$-local systems in \cite[Section 3.2.2]{Konstantinov}, one takes traces of parallel transport matrices explicitly.

    In the spectral setting, the spectral open-closed map is expected to be a map  \[
    \cO\cC: \pi_*THH^{R_\Psi}(\scrF(X; \Psi)) \to \Omega^{E_\Psi, \cO\cC}_*(X, \phi)\]
    from (relative) topological Hochschild homology, the `spectral analogue' of ordinary Hochschild homology. 
    Amongst other things, this would require an $A_\infty$ version of our spectral Fukaya category, which has not yet been built. 

    Recall that algebraic $K$-theory $K(R)$ of a ring spectrum $R$ is obtained from the colimit $\lim_{k \to \infty} BGL_k(R)$ by applying Quillen's plus construction. For $R$ commutative, there is a standard map from $K$-theory, $K(R) \to \Omega^\infty R$. 
    
    Crucially, Schlichtkrull computed the composition of these standard maps when restricted to $BGL_1(R) \subseteq K(R)$, surprisingly finding an instance of the Hopf map $\eta$:
    \begin{thm}[{\cite{Schlichtkrull}}]\label{thm:Schlichtkrull}
        Let $R$ be a commutative ring spectrum. The restriction of the trace map to $BGL_1(R)$ is determined by the Hopf map:
        \begin{equation}
            \xymatrix{
                BGL_1(R) 
                \ar[r]_{\subseteq}
                \ar[d]_{\eta_*} 
                &
                K(R)
                \ar[d]
                \\
                GL_1(R)
                \ar[r]_\subseteq
                &
                \Omega^\infty R
            }
        \end{equation}
        commutes up to homotopy.
    \end{thm}

    This is the motivation for $\eta$ arising in the study of spectral local systems and the open-closed map. Our model of the open closed map is constructed similarly to some model for the map $K(R) \to \Omega^\infty R$, and to prove Theorem \ref{thm:main2} in Section \ref{sec: OC} we pass through several cyclic bar-type constructions to be able to compare to the models for $\eta$ arising in \cite{Schlichtkrull}.

    \begin{rmk}
        For $R$ an ordinary ring, $\eta_*: BGL_1(R) \to GL_1(R)$ is nullhomotopic, reflecting Remark \ref{rmk: eoirkhgourhg}.
    \end{rmk}

\subsubsection*{Speculation on higher ranks}
    One should also be able to incorporate higher rank spectral local systems on a Lagrangian $L$ into the spectral Fukaya category. These can be modelled as maps $\xi_L: L \to BGL_n(R_\Psi)$ for $n > 1$ (or, more generally, maps $\xi_L: L \to B\Aut(M)$ for $M$ a perfect $R_\Psi$-module). The natural generalisation of Theorem \ref{thm:main2} would be that $[(L, \xi)] = [L] \cap [Tr\,\xi_L]$, where $Tr \,\xi_L$ is the composition of the map $BGL_n(R) \subseteq K(R) \to R$ and $\xi_L: L \to BGL_n(R)$. 

    Unlike Schlichtkrull's full computation in the $n=1$ case, we are not aware of any general computations of the map $BGL_n(R) \to R$ for $n > 1$, though there is has been some study of a related filtration on algebraic $K$-theory \cite{Rognes}. 

    Consider a plumbing of two closed manifolds along a point, $X=T^*Q_1 \# T^*Q_2$. \cite{AS:plumbings} shows that, once $\dim(Q_i) \geq 3$,  any closed exact graded Spin Lagrangian $L \subseteq X$ is generated (in the integral Fukaya category) by local systems over the core components. Since trace maps from $K$-theory send exact sequences to sums, one may expect that the open-closed map sends $L$ to a sum of classes of the form $[(Q_i, \xi)]$, for $\xi$ a (possibly high-rank) spectral local system on $Q_i$. Combined with higher-rank generalisations of Schlichtkrull's computation, this could be used to constrain bordism classes of Lagrangians in plumbings.

\noindent \textbf{\emph{Acknowledgements.}} N.P. is supported by EPSRC standard grant EP/W015889/1.  I.S. is partially supported by UKRI Frontier Research grant EP/X030660/1 (in lieu of an ERC Advanced grant).  We are grateful to Mohammed Abouzaid, Daniel Álvarez-Gavela, Sylvain Courte, Jeremy Hahn, Alice Hedenlund, Thomas Kragh, Alex Oancea, Oscar Randal-Williams, John Rognes for helpful discussions. N.P. is grateful to the Max Planck Institute for Mathematics in Bonn for its hospitality and financial support.

\section{Homotopical preliminaries}\label{sec: htpy prel}

Our main results make essential use of commutativity of tangential pairs (as we have explained in the introduction). A convenient model for infinite loop spaces is commutative $\cI$-monoids, which are a special kind of $\cI$-space. An $\cI$-space is a family of spaces indexed by elements of a set $\cI$ (e.g. the natural numbers), with good functoriality properties under inclusions of subsets (e.g. maps $[n] \to [m]$). For instance, $BO$ can be constructed as an infinite loop space in many ways; the commutative $\cI$-monoid structure does this using maps $\oplus: BO(n) \times BO(m) \to BO(n+m)$ and stabilisations $BO(n) \to BO(n')$. The `underlying’ space one is naively thinking of is built as a homotopy colimit, but having access to the `finite levels’ of these spaces is very natural in symplectic topology (where we stabilise Cauchy-Riemann operators or complex vector bundles by finite-dimensional spaces, even if one has to pass to larger and larger such spaces to see the underlying homotopy type emerge).  Working with $\cI$-spaces is also convenient for enabling us to directly leverage the results of  \cite{Schlichtkrull} when proving Theorem \ref{thm:main2}. 

Whilst $\cI$-spaces permeate the paper, some of the more delicate homotopical  preliminaries on bar constructions, $\Gamma$-spaces, etc are only used essentially in Section \ref{sec: OC}.

\subsection{Homotopy colimits}\label{sec: hocolim}

    Let $Spc$ be the category of (compactly generated, Hausdorff) topological spaces, and $Spc_*$ the category of (well-)based such spaces. We write $Spc_{(*)}$ to mean either of these two categories.
    \begin{defn}[{\cite{Bousfield-Kan}, cf. also \cite[Section 2.5]{Brun}}]
        Let $\cC$ be a small category, and $F: \cC \to Spc_{(*)}$ a functor. The \emph{homotopy colimit} of $F$, $\hocolim_\cC F$, is defined to be the realisation of the simplicial space:
        \begin{equation}\label{eq: hocolim}
            [k] \mapsto \bigvee\limits_{x_0 \leftarrow \ldots \leftarrow x_k \in \cC} F(x_k)
        \end{equation}
        where $\vee$ means $\sqcup$ in $Spc$, and the wedge is over sequences of $k$ composable morphisms in $\cC$.

        The face maps come from composition in $\cC$ and the degeneracy maps from inserting the identity of $x_i$.

        For an object $x \in \cC$, we write $\iota_0$ for the inclusion into the 0-simplices $F(x) \to \hocolim_F \cC$.
    \end{defn}
    $\hocolim$ takes functors to spaces, and natural transformations to maps of spaces:
    \begin{defn}\label{def: hocolim func}

        Homotopy colimits are functorial in the following way. Let $\cC$ and $\cD$ be small categories, and $F: \cC \to Spc_{(*)}$ and $G: \cD \to Spc_{(*)}$ be functors. Let $A: \cC \to \cD$ be a functor, and let $T: F \to G \circ A$ be a natural transformation.
    
        This induces a map $(A,T)_*: \hocolim_\cC F \to \hocolim_\cD G$, constructed as follows. The map sends each term $F(x_k)$ of the $k$-simplices corresponding to the sequence $x_0 \leftarrow \ldots \leftarrow x_k$ in $\cC$ to the term $F(A(x_k))$ corresponding to the sequence $A(x_0) \leftarrow \ldots \leftarrow A(x_k)$, and on this term is given by the map $F(x_k) \xrightarrow{T} G( A(x_k))$.
    \end{defn}
    The following is used in Section \ref{sec: OC}.
    \begin{lem}\label{lem: hocolim hom}
        Let $\cC, \cD$ be small categories, and $F: \cC \to Spc_{(*)}$ and $G: \cD \to Spc_{(*)}$ be functors. 

        Let $A, A': \cC \to \cD$ be functors, and $T: F \to G \circ A$, $T': F \to G \circ A'$ natural transformations.

        Let $U: A \to A'$ be a natural transformation, such that the two natural transformations $F \to G \circ A'$ given by $T'$ and $(G \circ U) \circ T$ agree.

        Then the two maps $(A,T)_*, (A',T')_*: \hocolim_\cC F \to \hocolim_\cD G$ are homotopic (via a naturally-constructed homotopy).
    \end{lem}
    
    \begin{proof}
        Let $A_1$ be the category with objects $0,1$, and one nontrivial arrow $a: 0 \to 1$. Let $H: \cC \times A_1 \to \cD$ be the functor given by $F$ on $\cC \times 0$, $G$ on $\cC \times 1$ and $U$ on $\cC \times a$. $T$ and $T'$ together define a natural transformation $S: F \to G \circ H$ of functors $\cC \times A_1 \to Spc_{(*)}$. 

        From Definition \ref{def: hocolim func}, we obtain a map $(H,S)_*: \hocolim_{\cC \times A_1} F \to \hocolim_\cD G$. There is an equivalence $\hocolim_{\cC \times A_1} F \simeq (\hocolim_\cC F) \times [0,1]$, and $(H,S)_*$ restricts to $(A,T)_*$ and $(A',T')_*$ over $0$ and $1$ respectively, thus providing the desired homotopy.
    \end{proof}

\subsection{$\cI$-spaces}
    In this section, we recap the notions of $\cI$-spaces and $\cI$-monoids, following \cite[Section 2]{Schlichtkrull}.

    Let $\cI$ be the category with objects nonnegative integers $n$, and morphisms $n \to m$ given by (not necessarily order-preserving) injections $[n] \to [m]$, where $[n] := \{1, \ldots, n\}$. 

    \begin{defn}
        An \emph{$\cI$-space} is a functor from $\cI$ to the category of spaces. 
    \end{defn}
    For any category $\cC$, one can similarly consider an \emph{$\cI$-object in $\cC$} as a functor $\cI \to \cC$.
    
    Given an $\cI$-space $X$, we define $X_{h\cI}=\operatorname{hocolim}_{n \in \cI} X(n)$ to be the homotopy colimit of the $X(n)$.

    We say an $\cI$-space $X$ is \emph{convergent} if the connectivity of the maps $X(n) \to X(n')$ goes to $\infty$ as $n \to \infty$.
    \begin{lem}[{\cite[Lemma 2.5.1]{Brun}}]
        If $X$ is a convergent $\cI$-space, the connectivity of the inclusion map $X(n) \to X_{h\cI}$ goes to $\infty$ as $n \to \infty$.
    \end{lem}
    \begin{rmk}
        All examples of $\cI$-spaces we encounter throughout this paper are convergent. 
    \end{rmk}
    
    $\cI$ is symmetric monoidal, with monoidal structure given by $(n, m) \mapsto n+m$. The unit of the symmetric monoidal structure is the empty set $0$ and the symmetric structure comes from the bijection $n+m \to m+n$ which sends the first $n$ elements to the last $n$ elements (preserving their order) and the last $m$ elements to the first $m$ elements (also preserving their order).

    \begin{rmk}
        Explicitly, the monoidal structure is defined by $(X \otimes Y)(n) = \operatorname{colim}_{[n_1] \sqcup [n_2] \to [n]} X(n_1) \times Y(n_2)$. If $\cC$ is a cocomplete (symmetric) monoidal category, the category of $\cI$-objects in $\cC$ is also (symmetric) monoidal.

        $\otimes$ is not always well-behaved homotopically, though it does have a better-behaved derived version, cf. \cite[Section 2]{Sagave-Schlichtkrull:Gp} for further discussion.
    \end{rmk}
    \begin{ex}
        Other categories of interest $\cC$ that satisfy these conditions include simplicial sets, and spaces over a fixed space $R$.
    \end{ex}

    \begin{defn}\label{def: Imon}
        An \emph{$\cI$-monoid} is a monoidal object in the category of $\cI$-spaces. Explicitly, this consists of an $\cI$-space $X$ equipped with maps $\mu_{mn}: X(m) \times X(n) \to X(m+n)$, which form an associative and unital natural transformation $X(\cdot) \times X(\cdot) \to X(\cdot+\cdot)$, where we view both sides as functors $\cI^2$ to spaces. Here unitality means that the basepoint acts as a two-sided identity element in the natural way.

        An $\cI$-monoid $X$ is \emph{commutative} if the following diagram commutes for all $m,n$:
        \begin{equation}\label{eq: Imon comm}
            \xymatrix{
                X(m) \times X(n)
                \ar[r]_\mu
                \ar[d]_{\operatorname{swap}}
                &
                X(m+n)
                \ar[d]_{\sigma_{mn}}
                \\
                X(n) \times X(m)
                \ar[r]_\mu
                &
                X(n+m)
            }
        \end{equation}  
        where $\sigma_{mn}$ swaps the first $m$ factors of $[m+n]$ with the final $n$ factors.
    \end{defn}
    \begin{rmk}\label{rmk: inj prob}
        Let $X$ be a commutative $\cI$-monoid, and let $S$ be a(n unordered) finite set. Let $n_s \in \cI$ for each $s \in S$, and let $N \in \cI$. Let $f: \sqcup_{s \in S} [n_s] \to [N]$ be an injective map.

        Choose an ordering on $S$, i.e. an isomorphism $g:S \cong [k]$ for some $k$, and consider the composition:
        \begin{equation}\label{eq: prod}
            \prod_{s \in S} X(n_s) \xrightarrow{g} X(n_{g^{-1}(1)}) \times \ldots \times X(n_{g^{-1}(k)}) \xrightarrow{\mu} X(n_{g^{-1}(1)} + \ldots + n_{g^{-1}(k)}) \xrightarrow{X(f)} X(N)
        \end{equation}
        By commutativity of $X$, this map does \emph{not} depend on the choice of ordering on $S$: it only depends on the map $f$. We will use this construction in several places in Sections \ref{sec: cI Gam} and \ref{sec: OC}.
    \end{rmk}   

    For an $\cI$-monoid $X$, $X_{h\cI}$ forms a topological monoid, with product given by the composition:
    \begin{equation}\label{eq: hI prod}
        X_{h\cI} \times X_{h\cI} = \operatorname{hocolim}\limits_{(m,n) \in \cI^2} X(m) \times X(n) \to \operatorname{hocolim}_{(m,n) \in \cI^2} X(m+n) \to X_{h\cI} 
    \end{equation}
    The first arrow uses the product maps $\mu_{mn}$ of $X$ and the second uses the monoidal structure on $\cI$.

    \begin{defn}
        An $\cI$-monoid $X$ is \emph{group-like} if the monoid $X_{h\cI}$ is group-like, i.e. $\pi_0 X_{h\cI}$ is a group.
    \end{defn}

    \begin{rmk}
        $\cI$-spaces model spaces, group-like $\cI$-monoids model loop spaces, and group-like commutative $\cI$-monoids model infinite loop spaces. More precisely, each of these is realised by a Quillen equivalence between the appropriate categories \cite{Sagave-Schlichtkrull:Diagram}.
    \end{rmk}
    \begin{ex}
        The groups $U(n)$ are functorial under injections $n \to m$, and so form a commutative $\cI$-monoid in the category of Lie groups rather than spaces. Since the bar construction is functorial, applying $B$ gives us an $\cI$-monoid in spaces $n \mapsto BU(n)$; we write $BU$ for this $\cI$-monoid. This construction applies similarly to other classical groups.

        A similar example is $U/O$: here each $U/O(n)$ is the space of totally real subspaces of $\bC^n$. This is naturally based at $\bR^n \subseteq \bC^n$; another example is then $\Omega U/O$, where we apply $\Omega$ levelwise.

        Another example is the (levelwise) universal cover $\widetilde{U/O}$ of $U/O$.
    \end{ex}
    \begin{ex}\label{ex: G as I}
        Let $G(n)$ be the topological monoid of based homotopy autoequivalences of $S^n$, viewed as the 1-point compactification of $\bR^n$ (with base-point at infinity). By identifying $S^{n+m} \cong S^n \wedge S^m$, along with the symmetric group action by permuting co-ordinates, $G(\cdot)$ becomes a commutative $\cI$-monoid in the category of (group-like) topological monoids. Applying the bar construction levelwise then provides a commutative $\cI$-monoid $BG$.
    \end{ex}
    \begin{ex}
        The maps $O(n) \to G(n)$, via the obvious linear actions of $O(n)$ on $S^n$, induces a map of commutative $\cI$-monoids $J: BO \to BG$, called the \emph{$J$ homomorphism}.
    \end{ex}
    \begin{ex}\label{ex: BSU}
        The fibration $\det^2: U(N) \to S^1$ is a group homomorphism so its (homotopy) fibre $S_\pm U(N) $ is a Lie group- explicitly, it is the subgroup of $U(N)$ consisting of matrices of determinant $\pm 1$, a $\bZ/2$-extension of $SU(N)$. Its delooping $BS_\pm U(N)$ is  the homotopy fibre of the map $BU(N) \to K(\bZ,2)$ classifying $2c_1$. The squared determinant homomorphism is compatible with the maps $U(N) \to U(M)$ induced by injections of finite sets $[N] \to [M]$. Hence the diagram
        \[
        \xymatrix{
        BS_\pm U(N) \ar[r] \ar@{.>}[d] & BU(N) \ar[r] \ar@{.>}[d] &  \bC\bP^{\infty} \ar@{.>}[d] \\ 
        S_\pm U(N) \ar[r] & U(N) \ar[r] & S^1
        }
        \]
        (with vertical arrows indicating taking based loops) exhibits $N \mapsto BS_\pm U(N)$ as an $\cI$-space.
    \end{ex}
    \begin{lem}
        There is a commutative diagram of commutative $\cI$-monoids:
        \begin{equation}
            \xymatrix{
                \widetilde{U/O}
                \ar[r]_{Re}
                \ar[d]
                &
                BO
                \ar[r]_{\cdot \otimes \bC}
                \ar[d]
                &
                BS_\pm U
                \ar[d]
                \\
                U/O
                \ar[r]_{Re}
                &
                BO
                \ar[r]_{\cdot \otimes \bC} 
                &
                BU
            }
        \end{equation}  
        where the rows are homotopy fibre sequences.
    \end{lem}
    \begin{rmk}
        Working with $BS_\pm U(N)$ (rather than the homotopy fibre $BSU(N)$ of the map classifying $c_1$) is natural from the viewpoint of gradings, and necessary for applications to the cotangent bundle $T^*Q$ of a non-orientable manifold $Q$, since then $c_1(T^*Q)$ is 2-torsion.
    \end{rmk}

    \begin{defn}
        A map of $\cI$-spaces $f: X \to Y$ is \emph{$k$-connected} if the induced map $X_{h\cI} \to Y_{h\cI}$ is $k$-connected. Similarly we set $\pi_* X := \pi_* X_{h\cI}$.
    \end{defn}

    \begin{defn}
        Let $f: X \to Y$ be a map of $\cI$-spaces. Its \emph{homotopy fibre} obtained by taking homotopy fibres of spaces levelwise: $\hofib(f: X \to Y)$ is the $\cI$-space sending $n$ to $\hofib(X(n) \to Y(n))$.

        Homotopy pullbacks are similarly defined levelwise.
    \end{defn}
    \begin{lem}\label{lem: hofib I}
        Let $f: X \to Y$ be a map of convergent $\cI$-spaces. Then $\hofib(f)$ is a convergent $\cI$-space, and $\hofib(f)_{h\cI} \to X_{h\cI} \to Y_{h\cI}$ is a homotopy fibre sequence of spaces.

        Similarly, taking homotopy pullbacks commutes with applying $\cdot_{h\cI}$.
    \end{lem}
    \begin{proof}
        If $X(n) \to X(m)$ and $Y(n) \to Y(m)$ are both $k$-connected, so is $\hofib(f)(n) \to \hofib(f)(m)$; the convergence claim follows.

        The canonical nullhomotopies of each composition $\hofib(f)(n) \to X(n) \to Y(n)$ assemble to a nullhomotopy of the composition $\hofib(f)_{h\cI} \to X_{h\cI} \to Y_{h\cI}$; there is therefore a map $\hofib(f)_{h\cI} \to \hofib(f_{h\cI})$, which induces an isomorphism of homotopy groups.

        The same argument applies to homotopy pullbacks.
    \end{proof}

    For an $\cI$-space $X$, we let $\Omega X$ be the $\cI$-space obtained from $X$ by applying $\Omega \cdot$ levelwise: $(\Omega X)(n) := \Omega(X(n))$.

    Applying Lemma \ref{lem: hofib I} to the map of $\cI$-spaces $\pt \to X$, we find:
    \begin{cor}
        Let $X$ be a convergent $\cI$-space. Then $\Omega X$ is convergent, and there is an equivalence of spaces $(\Omega X)_{h\cI} \simeq \Omega(X_{h\cI})$.
    \end{cor}

\subsection{Bar constructions}\label{sec: bar}
    We use bar constructions in various parts of the paper, particularly Sections \ref{sec: mon loco sys} and \ref{sec: OC}.

    \begin{defn}
        Let $G$ be a topological monoid. The \emph{bar construction} $BG$ is defined to be the realisation of the simplicial space $BG_\bullet$ with $k$-simplices $BG_k := G^k$. The face and degeneracy maps are given by:
        \begin{align}
            d_i(x_1, \ldots, x_k) 
            &=
            {\begin{cases}
                (x_2, \ldots, x_k)
                & \textrm{ if } i=0\\
                (x_1, \ldots, x_{k-1})
                & \textrm{ if } i=k\\
                (x_1, \ldots, x_i x_{i+1}, \ldots x_k)
                & \textrm{ otherwise}
            \end{cases}}
            \\
            s_i(x_1, \ldots, x_k) 
            &= 
            (x_1, \ldots, x_i, 1_G, x_{i+1}, \ldots, x_k)
        \end{align}

        This is a based space with base-point given by the unique 0-simplex $BG_0=G^0$.
        
        Let $M$ be a small topological category with object set $O$. Let $A$ be a right module and $D$ a left module over $M$. Then the \emph{bar construction} $B(A,M,D)$ is defined to be the realisation of the simplicial space $B(A,M,D)_\bullet$ with $k$-simplices:
        \begin{equation}
            B(A,M,D)_k := \bigsqcup\limits_{x_0, \ldots, x_k \in O} A(x_0) \times M(x_0, x_1) \times \ldots \times M(x_{k-1}, x_k) \times D(x_k)
        \end{equation}
    \end{defn}
    \begin{lem}
        $B$ defines a functor from topological monoids to spaces.

        If $f: G \to G'$ is a map of topological monoids such that $G$ and $G'$ are both group-like (meaning $\pi_0 G$ is a group) and $f$ is a weak equivalence of spaces, $Bf: BG \to BG'$ is also a weak equivalence of spaces.
    \end{lem}
    \begin{rmk}
        In a few constructions, it is more convenient to work with simplicial sets rather than topological spaces. Note that the (cyclic) bar constructions throughout Section \ref{sec: OC} still make sense as written when $G$ is a Kan complex rather than space; each construction is then the realisation of a bisimplicial set, which can be modelled by taking the diagonal simplicial set: in symbols, if $S_{\bullet \bullet}$ is a simplicial simplicial set, $|S_{\bullet\bullet}|$ is the simplicial set with $|S_{\bullet\bullet}|_k := S_{kk}$.
        
        We obtain similar constructions in the cases of topological spaces by using the equivalence $|\Sing_\bullet(Y)| \to Y$, where $\Sing_\bullet(Y)$ is the singular simplicial set of a space $Y$.
    \end{rmk}
    We model the standard topological $k$-simplex as $\Delta^k := \{(s_1, \ldots, s_k) \in [0,1]^k\,|\, 0 \leq s_1 \leq \ldots \leq s_k \leq 1\}$. We define maps $r_k: \Delta^k \to [0,k]$, to send $(s_1, \ldots, s_k) \mapsto s_1+\ldots+s_k$. 
    \begin{lem}\label{lem: b deloop}
        Let $Y$ be a path-connected based space which is homotopy equivalent to a CW complex.
        
        Then there is a weak equivalence of spaces $ev: B \Omega Y \to Y$.
    \end{lem}
    
    \begin{proof}
        We give a construction of the map here; that this is an equivalence is standard.

        We first define a map of simplicial spaces $ev': B\Sing_\bullet(\Omega Y) \to \Sing_\bullet Y$ as follows. On $k$-simplices, for $\gamma_1, \ldots, \gamma_k \in \Sing_k \Omega Y^k$, we define $ev'_k$ to send this tuple to the $k$-simplex in $Y$ sending $s \in \Delta^k$ to:
        \begin{equation}
            ev'_k(\gamma_1,\ldots,\gamma_k)(s) := \left[\overline \gamma_1^s \star \ldots \star \overline \gamma_k^s\right](r_k(s))
        \end{equation}
        For $\gamma$ a simplex in $\Sing_k\Omega Y$ and $s \in \Delta^k$, we write $\gamma^s$ for the corresponding loop in $\Omega Y$.

        We define the map $ev$ of spaces by requiring that the following diagram commutes up to homotopy:
        \begin{equation}
            \xymatrix{
                |B\Sing_\bullet\Omega Y|
                \ar[r]_{ev'}
                \ar[d]_\simeq 
                &
                |\Sing_\bullet Y|
                \ar[d]_\simeq
                \\
                B\Omega Y
                \ar[r]_{ev}
                &
                Y
            }
        \end{equation}
    \end{proof}
    
    The space of 1-simplices in $BG$ is exactly $G$. Since all such 1-simplices have boundary on the unique 0-simplex, we obtain a canonical map of spaces $\iota_1: G \to \Omega BG$. This is a weak equivalence of spaces. Unfortunately, $\iota$ only respects the product structure on $G$ and $\Omega BG$ up to (coherent) homotopy, and not on the nose. Instead, we have:
    \begin{lem}\label{lem: zig zag}
        Assume $G$ is group-like (meaning $\pi_0 G$ is a group). Then there is a zig-zag:
        \begin{equation}\label{eq: G zig zag}
            G \xleftarrow{\simeq} WG \xrightarrow{\simeq} \Omega BG
        \end{equation}
        where the intermediate space $WG$ is a group-like topological monoid, and both maps are maps of topological monoids which are homotopy equivalences of (unbased) spaces.

        Furthermore, this can be chosen so that the equivalence of spaces $BG \simeq B\Omega G$ obtained by applying $B$ to (\ref{eq: G zig zag}) agrees with the one from applying Lemma \ref{lem: b deloop} to $BG$.
    \end{lem}
    \begin{proof}
        This follows immediately from the statement of \cite[Theorem 1.3]{Vogt}.

        For completeness, we give a rough overview of the construction. Associated to a topological monoid $M$ there is another group-like topological monoid $WM$ (\cite[Definition 2.2]{Vogt}), along with a monoid map $\eps: WM \to M$ which is a homotopy equivalence of spaces\footnote{We remind the reader \emph{shrinkable} maps as considered in \cite[Definition 2.2]{Vogt} are always homotopy equivalences.}. $WM$ has the property that for another topological monoid $N$, monoid maps $WM \to N$ biject with \emph{(unital) homotopy homomorphisms} $M \to N$ (\cite[Definition 2.1]{Vogt}): roughly this is a map $M \to N$ which respects the products of $M$ and $N$ up to coherent higher homotopy (this is similar to a map of $\bE_1$-spaces). 
        
        Applying \cite[Theorem 4.4]{Vogt} to $M=G$ and $X=BG$, the map of based spaces $B\eps: BWG \to BG$ is induced by a (unique up to homotopy) map of monoids $WG \to W\Omega BG$; composing with $\eps$ gives a map $WG \to \Omega BG$. This completes the required zig-zag.

        The idea behind these constructions is that in $WM$, unlike in $M$, the product map $WM \times WM \to WM$ is an embedding with a ``collar neighbourhood'', and the collar neighbourhood coordinates can be used to keep track of the homotopy coherence.
    \end{proof}
    \begin{defn}
        We call the zig-zag (\ref{eq: G zig zag}), from $G$ to $\Omega BG$, $\widetilde \iota_1$.
    
        We write $\rightsquigarrow$ instead of $\to$ for a zig-zag: this will either be the zig-zag $\widetilde \iota_1$ of topological monoids as in (\ref{eq: G zig zag}), or the zig-zag of spaces obtained by applying a functor such as $B$ (or variants such as $B^{cyc}$) to this zig-zag.
    \end{defn}
    \begin{defn}
        Let $G$ be a topological monoid. The \emph{(1-pointed) cyclic bar construction} $B^{cyc}G$ is defined to be the realisation of the simplicial space $B^{cyc}G_\bullet$ with $k$-simplices $B^{cyc}G_k := G^{k+1}$. The face and degeneracy maps are given by:
        \begin{align}
            d_i(x_0, \ldots, x_k) 
            &=
            \begin{cases}
                (x_k x_0, \ldots, x_{k-1})
                &
                \textrm{ if } i=k
                \\
                (x_0, \ldots, x_i x_{i+1}, \ldots, x_k)
                &
                \textrm{ otherwise } 
            \end{cases}
            \\
            s_i(x_0, \ldots, x_k)
            &=
            (x_1, \ldots, x_i, 1_G, x_{i+1}, \ldots, x_k)
        \end{align}
        The \emph{2-pointed cyclic bar construction} $\BB^{cyc}G$ is defined to be the realisation of the simplicial space $\BB^{cyc}G_\bullet$ with $k$-simplices $G^{2k+2}$, and face and degeneracy maps given by:
        \begin{align}
            d_i(u, x_0^\pm, \ldots, x_k^\pm, v) 
            &=
            \begin{cases}
                (x_0^+ u x_0^-, x_1^\pm, \ldots, x_k^\pm, v)
                & \textrm{ if } i=0\\
                (u, x_1^\pm, \ldots, x_{k-1}^\pm, x_k^+ v x_k^-)
                & \textrm{ if } i=k\\
                (u, x_1^\pm, \ldots, x_{i}^\pm x_{i+1}^\pm, \ldots, x_k^\pm, v)
                & \textrm{ otherwise}
            \end{cases}
            \\
            s_i(u, x_0^\pm, \ldots, x_k^\pm, v) 
            &=
            (u, x_0^\pm, \ldots, x_i^\pm, (1_G, 1_G), x_{i+1}^\pm, \ldots, x_k^\pm, v)
        \end{align}
        where each $x_i^\pm$ represents a pair of elements in $G$, and $x_i^\pm x_{i+1}^\pm$ means pairwise multiplication.

        There is a map $\BB_1: \BB^{cyc}G \to B^{cyc}G$ (compare with \cite[(2.196)]{Sheel:Duality}) induced by a map $(\BB_1)_\bullet$ of simplicial spaces, defined on $k$-simplices by:
        \begin{equation}\label{eq:2b1}
            (\BB_1)_k(u, x_0^\pm, \ldots, x_k^\pm, v) = (v x_k^+ \ldots x_0^+ u, x_1^-, \ldots, x_k^-)
        \end{equation}
    \end{defn}

\begin{figure}[ht]
\begin{center}
\begin{tikzpicture}[scale=0.7] 
\draw (0,0) circle (3);
\draw[thick] (3,0) arc (0:180:3);
\draw[fill, black] (3,0) circle (0.15);
\draw[fill,black] (-3,0) circle (0.15);
\draw[fill,gray] (-3/2, 2.6) circle (0.1);
\draw[fill,gray] (3/2, 2.6) circle (0.1);
\draw[fill,gray] (-3/2, -2.6) circle (0.1);
\draw[fill,gray] (0,-3) circle (0.1);
\draw (3.5,0) node {$v$};
\draw (-3.5,0) node {$u$};

\end{tikzpicture}
\caption{Contracting the upper boundary arc gives a schematic for $\BB_1$. Later, in Section \ref{sec: OC}, a domain with two distinguished punctures gives an element in a 2-pointed cyclic bar complex, and contracting the upper arc amounts to passing from $\partial \Delta$ to $\partial \Delta^{\sim} :=\partial \Delta / \partial \Delta^+$ in the notation of Section \ref{sec: OC lab}\label{fig:from 2-pointed to 1-pointed}}
\end{center}
\end{figure}

    \begin{defn}\label{def: ev0 bar}
        Let $Y$ be a based space. 

        There is a map $ev_0: \BB^{cyc}\Omega Y \to Y$ which is defined on $k$-simplices to send $(u, x_1^\pm, \ldots, x_k^\pm, v)$ to $v(c/2)$, where $c$ is the Moore length of $v$. 
        
        We define the space $\oBB^{cyc} \Omega Y$ to be the realisation of the simplicial subspace of $\BB^{cyc} \Omega Y$ with $k$-simplices consisting of the subspace of tuples $(u, x_1^\pm, \ldots, x_k^\pm, v)$, such that:
        \begin{itemize} 
            \item Each $x_i^-$ is the reverse loop of $x_i^+$.
            \item If $c$ is the Moore length of $u$, then $u|_{[0, c/2]}$ is the reverse path of $u|_{[c/2, c]}$, and similarly for $v$.
        \end{itemize}
    \end{defn}

\subsection{Free loop spaces}
    In Section \ref{sec: OC} we will need to use the free loop space and its relationship to the bar construction.
    \begin{defn}
        Let $Y$ be a space. Its \emph{(Moore) free loop space} $\cL$ is the space of pairs $(\gamma, r)$, where $r \geq 0$ and $\gamma: [0,r] \to Y$ is a map such that $\gamma(0)=\gamma(r)$. We call $r$ the \emph{Moore length} of such a loop. 

        The inclusion of constant loops (with Moore length 0) defines a map $\operatorname{const}: Y \to \cL Y$.
    \end{defn}

    For a Moore path $\gamma$, we write $\overline \gamma$ for its reparametrisation to an ordinary path: if $\gamma$ has Moore length $l$, $\overline \gamma(t) = \gamma(t/l)$ (note this is well-defined since if $l=0$, $\gamma$ is constant). We write $\star$ for Moore concatenation of loops: i.e. $\gamma \star \delta$ is the Moore loop with Moore length given by the sum of those of $\gamma$ and $\delta$.
    \begin{defn}\label{def: cyc bar free loop}
        Let $Y$ be a space. We define a map of simplicial sets $e': B^{cyc}\Sing_\bullet(\Omega Y) \to \cL\Sing_\bullet (Y)$ as follows. On $k$-simplices, $e'$ is defined to be adjoint to the map $S^1_k \times \Sing_k(\Omega Y)^{k+1} \to \Sing_k(Y)$ which sends a simplex $(\tau^k_i, \gamma_0, \ldots, \gamma_k)$ to the singular simplex in $Y$ sending $s \in \Delta^k$ to:
        \begin{equation}
            \left[\overline \gamma_{i+1}^s \star \ldots \star \overline \gamma^s_k \star \overline \gamma^s_0 \star \ldots \star \overline \gamma^s_{i-1}\right](r_k(s))
        \end{equation}

        We define a map of spaces $e: B^{cyc}\Omega Y \to \cL Y$ by requiring that the following diagram commutes up to homotopy:
        \begin{equation}
            \xymatrix{
                |B^{cyc}\Sing_\bullet(\Omega Y)|
                \ar[r]_{e'}
                \ar[d]_\simeq 
                &
                |\cL \Sing_\bullet(Y)|
                \ar[d]_\simeq 
                \\
                B^{cyc}\Omega Y
                \ar[r]_e
                &
                \cL Y
            }
        \end{equation}
    \end{defn}
    \begin{ex}\label{ex: circ}
        We recap the definition of the standard simplicial circle, which (by abuse of notation) we write as $S^1$. This has $k$-simplices $\{\tau_0^k, \ldots, \tau^k_k\}$. The face and degeneracy maps are defined by:
        \begin{align}
            d_j\tau^k_i = \begin{cases}
                \tau^{k-1}_{i-1} 
                &
                \textrm{ if } j \leq i
                \\
                \tau_i^{k-1} 
                & 
                \textrm{ if } j > i
            \end{cases}
            &&
            s_j \tau^k_i = \begin{cases}
                \tau^{k+1}_{i+1}
                &
                \textrm{ if } j \leq i
                \\
                \tau^{k+1}_i
                &
                \textrm{ if } j > i
            \end{cases}
        \end{align}
        Note $\tau^1_0$ is the unique non-degenerate 1-simplex.
        
        This can be viewed as the sub-simplicial set of $\Sing_\bullet(S^1)$, where $\tau_i^k$ corresponds to the singular simplex $\Delta^k \to S^1$ sending $s=(s_1, \ldots, s_k) \in \Delta^k$ to $1$ if $i=k$ and to $s_{i-1}$ if $i \leq k - 1$.
    \end{ex}
    \begin{defn}[{\cite{Jones:cyc}}]\label{def: q}
        Let $G$ be a group-like monoid in the category of Kan complexes. We define a map of simplicial sets $q': B^{cyc}G_\bullet \to \cL BG_\bullet := \hom_{sSet}(S^1_\bullet, BG_\bullet)$ as follows.

        On $k$-simplices, $q'_k$ is defined to be adjoint to the map $S^1_k \times G^{k+1}_k \to G^k_k$, defined by:
        \begin{equation}
            (\tau^k_i, g_0, \ldots, g_k) \mapsto (g_{i+1}, \ldots, g_k, g_0, \ldots, g_{i-1})
        \end{equation}
        
        For $G$ a group-like topological monoid, we define a map of spaces $q: B^{cyc} G \to \cL BG$ by requiring that the following diagram commutes up to homotopy:
        \begin{equation}
            \xymatrix{
                B^{cyc}G
                \ar[r]_q
                \ar[d]_\simeq 
                &
                \cL BG
                \ar[d]_\simeq 
                \\
                |B^{cyc}\Sing_\bullet G| 
                \ar[r]_{q'}
                &
                |\cL BG_\bullet|
            }
        \end{equation}
    \end{defn}
    The two ways of relating cyclic bar constructions to free loop spaces agree in the following sense:
    \begin{lem}\label{lem: bar pre 1}
        Let $Y$ be a based space. Then the following diagram commutes up to homotopy:
        \begin{equation}\label{eq: B11}
            \xymatrix{
                B^{cyc}\Omega Y
                \ar[d]_{q}
                \ar[dr]^e
                &
                \\
                \cL B\Omega Y
                \ar[r]_{\cL ev}^\simeq
                &
                \cL Y
            }
        \end{equation}
    \end{lem}
    \begin{proof}
        It suffices to check this when $Y$ is instead assumed to be a Kan complex $Y_\bullet$, and $q,e,ev$ are replaced by $q',e',ev'$ respectively. The corresponding diagram then commutes on the nose: this is a straightforward check from the definitions.
    \end{proof}
    \begin{cor}
        Let $G$ be a group-like topological monoid. Then the following diagram commutes up to homotopy:
        \begin{equation}\label{eq: bar pre 1 revisited}
            \xymatrix{
                B^{cyc}G 
                \ar@{~>}[r]^{\iota_1} 
                \ar[dr]_q 
                &
                B^{cyc}\Omega BG 
                \ar[d]_e 
                \\
                &
                \cL BG 
            }
        \end{equation}
    \end{cor}
    \begin{proof}
        Consider the following diagram:
        \begin{equation}
            \xymatrix{
                B^{cyc} G 
                \ar[d]_q
                \ar@{~>}[r]^{B^{cyc}\iota_1} 
                &
                B^{cyc}\Omega BG
                \ar[d]_q
                \ar[dl]_e
                \\
                \cL BG
                \ar@{~>}[r]_{\cL B\iota_1} 
                &
                \cL B\Omega BG
            }
        \end{equation}
        The outer square commutes up to homotopy by naturality of $q$ and $\iota_1$. The lower triangle commutes up to homotopy by Lemma \ref{lem: bar pre 1} applied to $BG$, recalling $ev$ is a homotopy inverse to $B\iota_1$ by construction. Since all the maps are equivalences, it follows that the upper triangle also commutes up to homotopy; this is exactly (\ref{eq: bar pre 1 revisited}).
    \end{proof}

    \begin{lem}\label{lem: eoghrtoughor}
        Let $Y$ be a based space. Then the following diagram commutes up to homotopy:
                \begin{equation}\label{eq: bar pre 1}
            \xymatrix{
                \prescript{}2{\overline B}^{cyc}\Omega Y
                \ar[d]
                \ar[rr]_{ev_0}
                &&
                Y
                \ar[d]_{\operatorname{const}}
                \\
                \prescript{}2B^{cyc}\Omega Y
                \ar[r]_{\BB_1}
                &
                B^{cyc}\Omega Y
                \ar[r]_e
                &
                \cL Y
            }
        \end{equation}
    \end{lem}
    \begin{proof}
        Note that (\ref{eq: bar pre 1}) commutes after further concatenating with the map $ev_0: \cL Y \to Y$.
    
        Let $\overline \cL Y \subseteq \cL Y$ be the subspace of loops $\gamma: S^1 \to Y$ which, up to reparametrisation (by an autoequivalence of $S^1$ fixing the basepoint $0 \in S^1$), are given by some path concatenated with its inverse, i.e. of the form $\delta * \overline \delta$, $\delta: [0,1] \to Y$. The concatenation given by going down and then right twice in (\ref{eq: bar pre 1}) naturally factors through $\overline \cL$; let $f$ be this factorisation. Then $\operatorname{const} \circ ev_0 = ev_0 \circ f$. The claim then follows from homotopy commutativity of the following diagram:
        \begin{equation}
            \xymatrix{
                &
                Y
                \ar[d]^{\operatorname{const}}
                \\
                \overline \cL Y
                \ar[r]
                \ar[ur]^{ev_0}
                &
                \cL Y
            }
        \end{equation}
    \end{proof}
\subsection{$\Gamma$-spaces}

    We briefly recap the theory of Segal $\Gamma$-spaces \cite{Segal}, following Schlichtkrull's treatment \cite{Schlichtkrull}. Let $Spc_*$ be the category of (compactly generated, Hausdorff, well-based) based spaces.

    \begin{defn}
        Let $\Gamma^{op}$ be the category of based finite sets and based maps. A \emph{$\Gamma$-space} is a functor $E: \Gamma^{op} \to Spc_*$ such that $E(\pt) = \pt$.

        We say a $\Gamma$-space $E$ is \emph{special} if the map $E(S \vee T) \to E(S) \times E(T)$ is a weak equivalence for all $S, T \in \Gamma^{op}$.

        For a special $\Gamma$-space $E$, $E(S^0)$ obtains the structure of a homotopy-commutative $H$-space in a natural way, via the zig-zag:
        \begin{equation}
            E(S^0) \times E(S^0) \xleftarrow{\simeq} E(S^0 \vee S^0) \to E(S^0)
        \end{equation}
        We say $E$ is \emph{very} if the ensuing unital monoid $\pi_0 E(S^0)$ is a group.
    \end{defn}
    \begin{ex}\label{ex: prot gamm spac}
        Let $G$ be a commutative topological monoid. Let $E(S) = G^{\overline S}$ (where $\overline S$ is $S$ without the basepoint) and, for a map $f: S \to T$ in $\Gamma^{op}$, let $E(f): E(S) \to E(T)$ be given by multiplying together all of the factors in each fibre of $f$. This is well-defined because $G$ is commutative. 

        This defines a special $\Gamma$-space $E$, which is very if $G$ is group-like.

    \end{ex}
    \begin{rmk}
        Example \ref{ex: prot gamm spac} is the prototypical example of a $\Gamma$-space. However, most $\Gamma$-spaces do not arise in this way; see for example \cite[Corollary 4K.7]{Hatcher:book}.
        
        In general, (very) special $\Gamma$-spaces model (group-like) coherently homotopy-commutative and homotopy-associative topological monoids; more precisely, there is an equivalence (in the sense of Quillen \cite{Quillen:book}) between very special $\Gamma$-spaces and infinite loop spaces \cite[Theorem 5.8]{Bousfield-Friedlander}.
    \end{rmk}

    Let $E$ be a $\Gamma$-space. We extend the functor $E: \Gamma^{op} \to Spc_*$ to all based sets by left Kan extension. More explicitly, for a based set $S$, $E(S)$ is the (na\"ive, i.e. not homotopy) colimit over $E(T)$, for $T \subseteq S$ a finite subset. 

    We may also extend $E$ to based simplicial sets. For a based simplicial set $S_\bullet$, we define $E(S_\bullet)$ to be the realisation of the simplicial based space with $k$-simplices $E(S_k)$, and face/degeneracy maps induced by those of $S$.
\subsubsection{From $\cI$ to $\Gamma$}\label{sec: cI Gam}
    In this section, we construct a functor from commutative $\cI$-monoids to $\Gamma$-spaces. A similar construction appears in \cite[Section 2]{Segal}; here we follow \cite[Section 5.2]{Schlichtkrull}.

    \begin{defn}
        Let $S$ be a based finite set. We write $\overline S$ for $S$ minus its basepoint, and $\cP(S)$ for the category of subsets of $\overline S$, with morphisms given by inclusions of nested subsets.

        We define $\cS(S)$ to to be the category with objects functors $\theta: \cP(S) \to \cI$, such that for disjoint $U, V \subseteq \overline S$, the images of $\theta(U)$ and $\theta(V)$ in $\theta(U \cup V)$ are disjoint. The morphisms are natural transformations of functors.
    \end{defn}
    \begin{rmk}
        $\cP(\cdot)$ defines a contravariant functor from finite based sets to categories: for $f: S \to T$ and $U \in \cP(S)$, we obtain a functor $f^*: \cP(T) \to \cP(S)$, which is given on objects by $f^*(U) := f^{-1}U$. This preserves inclusions and so is naturally defined on morphisms too.

        $\cS(\cdot)$ defines a covariant functor from finite based sets to categories, by precomposition with $f^*$ as above.
    \end{rmk}
    \begin{rmk}
        There is a canonical equivalence of categories $\cS(S) \to \cI^{\overline S}$, but no canonical functor in the other direction without making a choice of ordering on $S$ (which cannot be done coherently in our subsequent constructions).
    \end{rmk}
    \begin{rmk}
        We think of objects of $\cS(S)$ as a way of coherently adding together tuples in $\cI$ indexed by subsets of $\overline S$: for each $s \in \overline S$ we have some $\theta(s) \in \cI$, corresponding to a finite approximation $E(\theta(s))$ of $E_{h\cI}$, and for each subset $U \subseteq \overline S$, we have a ``coherent'' way of applying the multiplication in $E$ to the $E(\theta(s))$ for $s \in U$. 
    \end{rmk}

    \begin{defn}\label{def: sS cat}
        Let $E$ be a commutative $\cI$-monoid, and let $S$ be a finite based set. $E$ defines a functor $E_S: \cS(S) \to Spc_*$, defined as follows.

        For an object $\theta \in \cS(S)$, $E_S(\theta)$ is defined to be $\prod_{s \in \overline S} E\left(\theta(s)\right)$.

        For a natural transformation $\theta \to \phi$ of functors $\cP(S) \to \cI$, this induces morphisms $\theta(s) \to \phi(s)$ for all $s \in \overline S$, and by applying the functor $E$ this induces maps of spaces $E_S(\theta) \to E_S(\phi)$.

        Let $f: S \to T$ be a map of finite based spaces. There is a natural transformation $E_{ST}: E_S \to E_T \circ f_*$ of functors $\cS(S) \to Spc_*$ constructed as follows. Let $\theta \in \cS(S)$. We define:
        \begin{equation}
            E_{ST}: \cS(S)(\theta) = \prod\limits_{s \in \overline S} \theta(s) \to \prod\limits_{t \in \overline T} \theta(f^{-1}\{t\}) = E_T\circ f_*(T)(\theta)
        \end{equation}
        so that the map is induced by multiplication in $E$ induced by the injection $\sqcup_{s \in f^{-1}\{t\}}[\theta(s)] \to [\theta(f^{-1}\{t\})]$ induced by $\theta$, for each $t \in \overline T$.

    \end{defn}
    \begin{defn}\label{def: Gam spc from I mon}
        Let $E$ be a commutative $\cI$-monoid. We define a $\Gamma$-space $E^\Gamma$ as follows.

        Let $S$ be a finite based set. We set:
        \begin{equation}
            E^\Gamma(S) := \underset{\theta \in \cS(S)}{\hocolim}\, E_S(\theta)
        \end{equation}

        For a morphism $f: S \to T$ of finite based sets, applying the construction of Definition \ref{def: hocolim func} to the functor $f_*: \cS(S) \to \cS(T)$ and the natural transformation $E_S \to E_T \circ f_*$ defines a map of based spaces $E^{\Gamma}(S) \to E^\Gamma(T)$. It is clear that this is compatible with composition and so $E^\Gamma$ defines a functor $\Gamma^{op} \to Spc_*$.
    \end{defn}
    \begin{lem}[{\cite[Proposition 5.3]{Schlichtkrull}}]
        Let $E$ be a commutative $\cI$-monoid.

        Then $E^\Gamma$ is a special $\Gamma$-space. If $E$ is group-like, $E^\Gamma$ is a very special $\Gamma$-space.
    \end{lem}
    \begin{rmk}
        \cite{Sagave-Schlichtkrull:Diagram} shows that the functor $\cdot^\Gamma$ is part of a Quillen equivalence between group-like commutative $\cI$-monoids and very special $\Gamma$-spaces.
    \end{rmk}

\subsubsection{Circles}

    \begin{ex}
        Let $E$ be a commutative $\cI$-monoid. We compute $E^\Gamma(S^0_\bullet)$. We consider $S^0_\bullet$ with its standard model as a simplicial set: there are exactly two $k$-simplices for each $k$, one of which is over the basepoint.

        Since $\overline{S^0} = \pt$, its $k$-simplices $\overline{S^0_k}$ are each a single point. Therefore each $E^\Gamma(S^0_k)$ is naturally identified with $E^\Gamma(S^0)$, with all the face and degeneracy maps being the identity. It follows that the inclusion of the zero-simplices $E^\Gamma(S^0_0)$ into the realisation $E^\Gamma(S^0_\bullet)$ is a homeomorphism.

        $\cS(S^0)$ is naturally isomorphic to $\cI$, so we find that $E^\Gamma(S^0)$ and hence $E^\Gamma(S^0_\bullet)$ are both naturally homeomorphic to $E_{h\cI}$. 
    \end{ex}

    \begin{lem}[{\cite[Proposition 5.3]{Schlichtkrull}}]\label{lem: gam delop}
        Let $E$ be a convergent commutative $\cI$-monoid. Then there is a weak equivalence of spaces $BE_{h\cI} \to E^\Gamma(S^1)$.
    \end{lem}

    \begin{lem}[{\cite[Proposition 6.3]{Schlichtkrull}}]\label{lem: S6.3}
        Let $E$ be a convergent commutative $\cI$-monoid. Then there is a weak equivalence of spaces $F: B^{cyc}E_{h\cI} \simeq E^\Gamma(S^1_+)$. Here $S^1_+$ is $S^1$ with a disjoint basepoint added.
    \end{lem}
    The equivalence of Lemma \ref{lem: S6.3} is constructed as a zig-zag. We recap the construction here as it will be used explicitly in Section \ref{sec: OC}. The intermediate space in the zig-zag, $V^{cyc}E$, is defined to be the realisation of the simplicial space $V^{cyc}E_\bullet$, with $k$-simplices:
    \begin{equation}\label{eq: Vcyc}
        V^{cyc}E_k = \underset{(n_0, \ldots, n_k, \theta)\in\cI^{k+1} \times \cS((S^1_+)_k)}{\hocolim} E(n_0+\theta(\tau_0)) \times \ldots \times E(n_k+\theta(\tau_k))
    \end{equation}
    The natural functors $\cI^{k+1} \to \cI^{k+1} \times \cS((S^1_+)_k) \leftarrow \cS((S^1_+)_k)$ induce maps of spaces $B^{cyc}E_{h\cI} \to V^{cyc}E \leftarrow \cS(S^1_+)$, which are shown to be equivalences in \cite{Schlichtkrull} using the convergence condition.

    \begin{lem}\label{lem: p668}
        Let $E$ be a very special $\Gamma$-space. Then there is a natural equivalence $E(S^1_+) \to \cL E(S^1)$.
    \end{lem}
    This equivalence is constructed on \cite[Page 668]{Schlichtkrull}. 

    For $E$ a commutative $\cI$-monoid, we now have two equivalences between the free loop space and the cyclic bar construction, coming from Definition \ref{def: q} and Lemma \ref{lem: p668}. They agree in the appropriate sense:
    \begin{lem}\label{lem: 330}
        Let $E$ be a commutative $\cI$-monoid. Then the following diagram commutes up to homotopy:
        \begin{equation}\label{eq: 330}
            \xymatrix{
                B^{cyc}E_{h\cI}
                \ar[r]
                \ar[d]_q
                &
                V^{cyc}E
                &
                E^\Gamma(S^1_+) 
                \ar[d]
                \ar[l]
                \\
                \cL BE_{h\cI} 
                \ar[rr]_{\cL F} 
                &&
                \cL E^\Gamma(S^1)
            }
        \end{equation}
        where $q$ is as in Definition \ref{def: q} and the right vertical arrow is the map from Lemma \ref{lem: p668}.
    \end{lem}
    \begin{proof}
        This exact statement is proved in \cite[Proof of Theorem 1.2]{Schlichtkrull}. We remark on the differences on notation: what Schlichtkrull calls $W^{cyc}$ we call $V^{cyc}$, and what he calls $GL_1(R)$\footnote{Note that the $X$ appearing in \cite[Proof of Theorem 1.2]{Schlichtkrull} is a typo and should say $GL_1(R)$.} can be replaced with our $E$ without affecting the argument. 
    \end{proof}

\subsection{Delooping}\label{sec: de loop}
    \begin{defn}
        Let $E$ be a $\Gamma$-space. We define the $\Gamma$-space $BE$ so that $BE(S)$ is the realisation of the simplicial space $[k] \mapsto E(S \times [k])$.
    \end{defn}
    The category of $\Gamma$-spaces is presentable \cite[Appendix A]{Schwede:Gamma}; $B$ is constructed as a colimit and hence preserves colimits, so $B$ has a right adjoint $\Omega$. For very special $E$, the unit $E \to \Omega B E$ of the adjunction is a weak equivalences (i.e. induces an equivalence of spaces for all $S \in \Gamma^{op}$) \cite{Segal}. 
    \begin{lem}[{\cite{Segal}}]
        Let $E$ be a very special $\Gamma$-space. Then $BE$ is also very special.
    \end{lem}
    \begin{lem}
        Let $X_\bullet$ be a based simplicial set and $E$ a very special $\Gamma$-space. Then there are natural equivalences $E(X_\bullet \wedge S^i) \simeq B^i(X_\bullet)$ for all $i$.
    \end{lem}
    \begin{proof}
        Since both sides preserve colimits send disjoint unions to products, it suffices to check this when $X_\bullet$ is $S^0$. For $i=1$, this is Lemma \ref{lem: gam delop}, and for $i \geq 2$, we may apply Lemma \ref{lem: gam delop} iteratively using $S^i = (S^1)^{\wedge i}$.
    \end{proof}
    The stable homotopy groups of spheres act on very special $\Gamma$-spaces in the following way.
    \begin{defn}\label{def: stab stem act gam}
        Let $E$ be a very special $\Gamma$-space. Let $\alpha \in \pi_i\bS$. We define $\alpha_*: E(S^i) \to E(S^0)$ to be the composition:
        \begin{equation}\label{ewojfwrpjgerj}
            E(S^i) \simeq \Omega^k E(S^{i+k}) \xrightarrow{\Omega^k E(\alpha')} \Omega^k(S^k) \simeq E(S^0)
        \end{equation}
        where $\alpha': S^{i+k} \to S^k$ is an unstable representative of $\alpha$.

        This is a well-defined map of spaces, up to homotopy.
    \end{defn}
    \begin{rmk}\label{rmk: eta act i spac}
        Let $X$ be a commutative $\cI$-monoid, and $\alpha \in \pi_1\bS$. By Lemma \ref{lem: gam delop}, $\alpha$ then induces a map $BX_{h\cI} \to X_{h\cI}$, well-defined up to homotopy.
    \end{rmk}
    
    Let $\eta \in \pi_1\bS$ be the stable Hopf map. Schlichtkrull proves the action of $\eta$ on an infinite loop space admits the following alternative description; this will be crucial in Section \ref{sec: OC}:
    \begin{prop}[{\cite[Proposition 7.3]{Schlichtkrull}}]\label{prop: 331}
        Let $E$ be a very special $\Gamma$-space. Then the following composition is homotopic to $\eta_*$:
        \begin{equation}
            E(S^1) \xrightarrow{\operatorname{const}} \cL E(S^1) \xleftarrow{\simeq} E(S^1_+) \xrightarrow{S^1_+ \to S^0} E(S^0)
        \end{equation}
        where the backwards arrow is the equivalence from Lemma \ref{lem: p668}.
    \end{prop}

\subsection{Symmetric spectra}\label{sec: spec}
    \begin{defn}[{\cite[Definition 1.2.2]{Hovey-Shipley-Smith}, cf. also \cite{Mandell-May-Schwede-Shipley}}]
        A \emph{symmetric spectrum} $Z$ consists of:
        \begin{itemize}
            \item A sequence of based spaces $Z_n$, along with (left) actions of the symmetric group $\Sym_n$.
            \item Maps $\eps_n: S^1 \wedge Z_n \to Z_{n+1}$.
        \end{itemize}
        such that the compositions:
        \begin{equation}
            \eps^i_n: S^i \wedge Z_n = (S^1)^{\wedge i} \wedge Z \to (S^1)^{\wedge (i-1)} \wedge Z_{1+n} \to \ldots \to Z_{i+n}
        \end{equation}
        are $\Sym_i \times \Sym_n$-equivariant.
        
        A symmetric spectrum $Z$ is \emph{convergent} if the connectivity of the maps $\eps^i_n$ goes to $\infty$ as $n \to \infty$. 
    \end{defn}
    We will often refer symmetric spectra just as spectra.
    \begin{defn}
        A \emph{ring spectrum} is a spectrum $R$ along with:
        \begin{itemize}
            \item $\Sym_n$-equivariant maps $\iota: S^n \to R_n$.
            \item $\Sym_i \times \Sym_j$-equivariant maps $\mu: R_i \wedge R_j \to R_{i+j}$.
        \end{itemize}
        such that for all $i,j,k$,
        \begin{itemize}
            \item $\iota_{i+j} = \eps^i(\id_{S^i} \wedge \iota_j)$
            \item (Unitality): The following diagrams commute:
            \begin{equation}
                \xymatrix{
                    S^i \wedge R_j 
                    \ar[r]_{\iota_i}
                    \ar[dr]_{\eps^i_j} 
                    &
                    R_i \wedge R_j 
                    \ar[d]_\mu 
                    &
                    R_i \wedge S^j
                    \ar[d]_{\operatorname{swap}}
                    \ar[r]_{\id_{R_j} \wedge \iota_j} 
                    &
                    R_i \wedge R_j
                    \ar[dr]_\mu 
                    &
                    \\
                    &
                    R_{i+j}
                    &
                    S^j \wedge R_i 
                    \ar[r]_{\eps^j_i}
                    &
                    R_{i+j} 
                    \ar[r]_{\sigma_{ij}}
                    &
                    R_{i+j}
                }
            \end{equation}
            where $\sigma_{ij} \in \Sym_{i+j}$ swaps the first $i$ and last $j$ entries.
            \item (Associativity): The following diagram commutes:
            \begin{equation}
                \xymatrix{
                    R_i \wedge R_j \wedge R_k 
                    \ar[r]_\mu
                    \ar[d]_\mu 
                    &
                    R_i \wedge R_{j+k}
                    \ar[d]_\mu
                    \\
                    R_{i+j} \wedge R_k
                    \ar[r]_\mu
                    &
                    R_{i+j+k}
                }
            \end{equation}
        \end{itemize}
        $R$ is \emph{commutative} if the following diagram also commutes:
        \begin{equation}
            \xymatrix{
                R_i \wedge R_j
                \ar[r]_\mu 
                \ar[d]_{\operatorname{swap}}
                &
                R_{i+j}
                \ar[d]_{\sigma_{ij}}
                \\
                R_j \wedge R_i
                \ar[r]_\mu
                &
                R_{i+j}
            }
        \end{equation}
    \end{defn}
    \begin{rmk}
        The category of symmetric spectra is a symmetric monoidal \cite[Section 2]{Hovey-Shipley-Smith}, and a (commutative) ring spectrum is exactly a (commutative) ring object in this category.
    \end{rmk}

\subsection{Units of ring spectra\label{subsec:units}}
    \begin{defn}[{\cite[Section 2.3]{Schlichtkrull}}]
        Let $R$ be a commutative ring spectrum. There is a commutative $\cI$-monoid $\Omega^\infty R$ sending $n$ to $\Omega^n R_n$, with $\cI$-space structure maps constructed as follows. For a morphism $f: i \to j$ in $\cI$ and $\alpha \in \Omega^i R_i$, $f$ sends $\alpha$ to the composition:
        \begin{equation}
            S^j \xrightarrow{\tilde f^{-1}} S^j \xrightarrow{=} S^{j-i} \wedge S^i \xrightarrow{\alpha} S^{j-i} \wedge R_i \xrightarrow{\eps_i^{j-i}} R_j \xrightarrow{\tilde f} R_j
        \end{equation}
        where $\tilde f: [j] \to [j]$ is defined so that if $x > j-i$, $\tilde f = f(x-j+i)$, and is order-preserving on the first $j-i$ entries. The product structure is the composition:
        \begin{equation}
            \Omega^i R_i \wedge \Omega^j R_j \to \Omega^{i+j} R_i \wedge R_j \xrightarrow{\Omega^{i+j} \mu} \Omega^{i+j} R_{i+j}
        \end{equation}
        By abuse of notation, we also sometimes write $\Omega^\infty R$ for $\Omega^\infty R_{h\cI}$.
    \end{defn}

    \begin{defn}[{\cite[Section 2.3]{Schlichtkrull}}]\label{def: GL one}
        Let $R$ be a commutative symmetric ring spectrum. There is a corresponding group-like commutative $\cI$-monoid $GL_1(R)$. Each $GL_1(R)(i)$ is defined to be the union of components of $\Omega^\infty R(i)$, which admit an element $c$ such that there is some $j$ and $c' \in \Omega^\infty R(j)$ such that $\mu(c,c') \in \Omega^\infty R(i+j)$ lies in the trivial path component. The product structure is inherited from $\Omega^\infty R$.
    \end{defn}
    Note that the assignment $R \mapsto GL_1(R)$ is functorial in $R$.
    \begin{rmk}\label{rmk:A}
        $GL_1(R)$ corresponds to an infinite loop space under the equivalence of \cite{Sagave-Schlichtkrull:Diagram} and hence gives rise to a connective spectrum, denoted $gl_1(R)$, and similarly delooping this infinite loop space gives another connective spectrum $bgl_1(R)$, which is equivalent to the shift $\Sigma gl_1(R)$. The map of infinite loop spaces induced by a class $\alpha \in \pi_1 \bS$ as in Definition \ref{def: stab stem act gam} gives a map of spectra $\alpha_*: bgl_1(R) \to gl_1(R)$ whose delooping is (\ref{ewojfwrpjgerj}). Explicitly, this is the map $\Sigma gl_1(R) \to gl_1(R)$ induced by $\alpha$ using the action of $\pi_*\bS$ on any spectrum.
    \end{rmk}
    \begin{ex}
        When $R=\bS$ is the sphere spectrum, $GL_1(\bS) \simeq G$ is as considered in Example \ref{ex: G as I}. Note however that for an arbitrary commutative ring spectrum $R$, each $GL_1(R)(n)$ is not a monoid, so we cannot apply $B\cdot$ levelwise to obtain a new commutative $\cI$-monoid $BGL_1(R)(\cdot)$: one must apply $\cdot_{h\cI}$ and then apply $B\cdot$.
    \end{ex}
    \begin{rmk}
        If $R$ is a commutative ring spectrum such that $R$ is convergent, so are both $\Omega^\infty R$ and $GL_1(R)$.
    \end{rmk}

\subsection{Thom spaces}
    \begin{defn}
        A \emph{Thom space} is a pair $(X, E)$, where $X$ is a space and $E$ is a vector bundle over $X$. A \emph{morphism} $(X, E) \to (X', E')$ is a map of spaces $f: X \to X'$ along with an isomorphism of vector bundles $E \to f^*E$. We write $\Thom$ for the ensuing category of Thom spaces.

        The category of Thom spaces is symmetric monoidal, with $(X, E) \times (X', E') := (X \times X', E \boxplus E')$.

        The category of Thom spaces is tensored over both the category of spaces by setting $(X, E) \times Y := (X \times Y, E)$, and the category of (finite-dimensional) vector spaces by setting $(X, E) \times V := (X, E \oplus V)$. Similarly, if $E' \to X$ is another vector bundle, we set $(X, E) \oplus E'$ to be $(X, E \oplus E')$.
    \end{defn}

    Given a Thom space $(X, E)$, we may take its Thom space (in the classical sense) $\Thom(E \to X)$; the convention that we use is that this is the quotient of the fibrewise one-point compactification of the total space $\Tot(E \to X)$ by the section at infinity.

    \begin{defn}\label{def: Thom}
        A \emph{Thom $\cI$-space} $X$ consists of Thom spaces $X(n)$ for each $n \in \cI$, and for each morphism $f: n \to m$ in $\cI$, a map of Thom spaces $f(n): X(n) \oplus \bR^{m-n} \to X(m)$, where $m-n$ denotes the finite set $[m] \setminus f[n]$.

        We require that, for morphisms $n \to m \to l$, the following diagram commutes:
        \begin{equation}
            \xymatrix{
                X(n) \oplus \bR^{m-n} \oplus \bR^{l-m} 
                \ar[r]_=
                \ar[d]
                &
                X(n) \oplus \bR^{l-n}
                \ar[d]
                \\
                X(m) \oplus \bR^{l-m}
                \ar[r]
                &
                X(l)
            }
        \end{equation}

        A \emph{morphism} $X \to Y$ of Thom $\cI$-spaces consists of maps $X(n) \to Y(n)$ for each $n$, such that for any morphism $n \to m$ in $\cI$ the following diagram commutes:
        \begin{equation}
            \xymatrix{
                X(n) \oplus \bR^{m-n}
                \ar[r]
                \ar[d]
                &
                Y(n) \oplus \bR^{m-n}
                \ar[d]
                \\
                X(m)
                \ar[r]
                &
                Y(m)
            }
        \end{equation}
        The category $\Thom^{\cI}$ of Thom $\cI$-spaces is symmetric monoidal, via the same construction of the monoidal structure on $\cI$-spaces (cf. Remark \ref{def: Imon} and \cite[Section 2]{Sagave-Schlichtkrull:Gp}): for example, a map $X \otimes Y \to Z$ consists of the same data as a sequence of maps in $\sSet\Vect$ $X(n) \times Y(m) \to Z(n+m)$, satisfying a similar requirement to Definition \ref{def: Imon}. There is a `shift functor' on Thom $\cI$-spaces, where $X[1]$ is defined so that $X[1](n) = X(n) \oplus \bR$.

        A \emph{(commutative) Thom $\cI$-monoid} is a (commutative) monoid in the category $\Thom^\cI$.
    \end{defn}
    \begin{rmk}
        Note that a Thom $\cI$-space is not the same as an $\cI$-object in the category $\Thom$.
    \end{rmk}

Since commutative $\cI$-monoids model infinite loop spaces, if $X$ is a commutative $\cI$-monoid then $X_{h\cI}$ admits an ``inversion" map $-1$.  This is not typically realised on the finite approximations $X(n)$, so we introduce the following:

\begin{defn}\label{defn:inverse in  I-monoid}
    Let $X$ be a group-like commutative $\cI$-monoid and $f: Z \to X(n)$ a map.  Then an \emph{inverse} to $f$ comprises
    \begin{enumerate}
        \item positive integers $m$ and $l$ with $[n] \sqcup [m] \hookrightarrow [l]$;
        \item a map (schematically denoted) $f^{-1}: Z \to X(m)$;
        \item a nullhomotopy of the composite 
        \[
        Z \to X(n)  \times X(m) \to X(l)
        \]
    \end{enumerate}
\end{defn}

\begin{lem}\label{lem:inverse maps to I-monoids exist}
    If $Z$ is a finite-dimensional cell complex, then for $m$ and $l$ sufficiently large, a map $f: Z\to X(n)$  admits an inverse. Moreover, the connectivity of the space of choices (i)-(iii) as in Definition \ref{defn:inverse in  I-monoid} becomes unbounded as $m,l \to \infty$.
\end{lem}

\begin{proof}
    Both statements follow from the fact that $X_{h\cI}$ has the homotopy type of an infinite loop space.
\end{proof}

Now if $E$ is a commutative Thom $\cI$-monoid and $(Z,F) \to E(n)$ is a map of Thom spaces, then choosing an inverse to $f: Z \to \Base E(n)$ as above, one obtains 
\begin{equation}\label{eqn:trivialisation from inverse monoid map}
F \oplus (f^{-1})^* E(m) \cong \bR^{\mathrm{rk}(E(l))}
\end{equation}
and this trivialisation is canonical up to homotopy.

    \begin{defn}\label{def: thom from thom}
        There is a symmetric monoidal functor from $\Thom$ $\cI$-spaces to symmetric spectra, sending $(X,E)$ to the symmetric spectrum sending $n$ to the Thom space $\Thom(E(n) \to X(n))$.

        There is a symmetric monoidal functor $\Base$ from $\Thom$ ($\cI$-)spaces to ($\cI$-)spaces, sending $Y=(X, E)$ to $X$.
    \end{defn}
    \begin{rmk}\label{rmk: comm thom ring spec}
        Let $(X, E)$ be a (commutative) Thom $\cI$-monoid. Its image under the functor to symmetric spectra is a (commutative) ring spectrum; explicitly, its $n^{th}$ space is $\Thom(E(n) \to X(n))$. If $(X, E)$ is convergent, so is the corresponding ring spectrum.
    \end{rmk}
    \begin{defn}
        Let $X, Y$ be Thom ($\cI$-)spaces, and $f: X \to Y$ a map.

        $f$ is an \emph{equivalence} if $\Base(f)$ is and that $f$ is \emph{$k$-connected} if $\Base(f)$ is. If $Z$ is a Thom $\cI$-space, $Z$ is \emph{convergent} if $\Base(Z)$ is convergent.
    
    \end{defn}

    \begin{lem}
        Let $f: (X, E) \to (X', E')$ be a $k$-connected map of Thom spaces, and $g: (Z, F) \to (X', E')$ a map of Thom spaces. Consider the lifting problem:
        \begin{equation}\label{eq: reiwoghe}
            \xymatrix{
                &
                (X,E)
                \ar[d]_f
                \\
                (Z,E)
                \ar[r]_g
                \ar@{-->}[ur]
                &
                (X',E')
            }
        \end{equation}
        If $Z$ is a CW complex of dimension $\leq k$, then there exists a homotopy lift $h$ of Thom spaces as in (\ref{eq: reiwoghe}). Explicitly, this means a map $h$ as in the diagram, along with a homotopy between the two ways around the diagram.

        If instead $Z' \subseteq Z$ is a CW subcomplex, a homotopy lift $h$ of (\ref{eq: reiwoghe}) is given on $(Z', E|_{Z'})$ and $Z$ is obtained from $Z'$ by attaching cells of dimension $\leq k$, then there exists a homotopy lift over all of $Z$, extending the given one over $Z'$.
    \end{lem}
    \begin{proof}
        This is standard in spaces. The case of Thom spaces follows from the fact that an isomorphism of vector bundles over $Z \times \{0\}$ can always be extended to one over $Z \times [0,1]$.
    \end{proof}

    Many of our constructions are more naturally done on the level of simplicial sets, so it will be convenient to work with the following variants of the above definitions.
    \begin{defn}
        For a simplicial set $S_\bullet$, a \emph{vector bundle} on $S_\bullet$ is a vector bundle (in the usual sense) on its geometric realisation $|S_\bullet|$.
    \end{defn}
    \begin{defn}
        A \emph{Thom simplicial set} consists of a pair $(X_\bullet, E)$, where $X_\bullet$ is a simplicial set and $E$ is a vector bundle over $X_\bullet$. The category of Thom simplicial sets is symmetric monoidal, by setting $(X_\bullet, E) \otimes (X'_\bullet, E') := (X_\bullet \times X'_\bullet, E \boxplus E')$ (using the canonical homeomorphism $|X_\bullet| \times |X'_\bullet| \cong |X_\bullet \times X'_\bullet|$).

        The \emph{geometric realisation} $|(X_\bullet,E)|$ of a Thom simplicial set or Thom $\cI$-simplicial set is given by applying the geometric realisation functor to the underlying simplicial set(s).
    \end{defn}
    The functor $\Base$ from Thom ($\cI$-)simplicial sets to ($\cI$-)simplicial sets, as well as the notions of connectivity and equivalence, are defined the same as in spaces.

\section{Abstract discs\label{sec:abstract discs}}

    In this section, we give models for Bott periodicity well-adapted to index theory for Cauchy-Riemann operators; this is largely similar to \cite[Section 3]{PS2}, with some modifications to account for commutativity, and now incorporating the language of Section \ref{sec: htpy prel}. 
\subsection{Tangential pairs}
    \begin{defn}\label{def: comm tang pair}
        A \emph{(graded) commutative tangential pair} $\Psi$ consists of a pair $(\Theta \to \Phi)$ of group-like connected commutative $\cI$-monoids, equipped with maps to $BO$ and $BS_\pm U$ fitting into the following commutative diagram:
        \begin{equation}
            \xymatrix{
                \Theta 
                \ar[r]
                \ar[d]
                &
                \Phi
                \ar[d]
                \\
                BO
                \ar[r]
                &
                BS_{\pm} U
            }
        \end{equation}
        where $BS_{\pm} U$ is as in Example \ref{ex: BSU}. We require that the homotopy fibre $F$ of $\Theta \to \Phi$ is simply connected.

        We say that $\Phi$ is \emph{oriented} if the map on homotopy fibres $F \to \widetilde{U/O}$ induces the zero map on $\pi_2$, and \emph{unoriented} otherwise. 
        
    \end{defn}
    One of the main differences between our set-up here and in \cite{PS2} is that in \cite{PS2}, we did not impose any sort of commutativity.
    \begin{rmk}
        Lemma \ref{lem: pi zero R} motivates the $\pi_2$ condition for orientability.
    \end{rmk}
    \begin{ex}
        We define $fr$ to be the commutative tangential pair $(\Theta=\pt, \Phi=\pt)$.
        
        Commutative tangential pairs form a category in a natural way, and $fr$ is initial in the category of commutative tangential pairs.
    \end{ex}
    \begin{ex}\label{ex: tang str}
        It is often convenient to simplify the discussion by working with commutative tangential pairs where $\Phi=\pt$ and $\Theta=F$ some commutative $\cI$-monoid $F$ over $\widetilde{U/O}$:
        \begin{equation}
            \xymatrix{
                &
                F
                \ar[r]
                \ar[dl]
                \ar[d]
                &
                \pt
                \ar[d]
                \\
                \widetilde{U/O}
                \ar[r]
                &
                BO
                \ar[r]
                &
                BS_\pm U
            }
        \end{equation}
    \end{ex}
    \begin{rmk}
        In \cite{PS2}, this corresponds to what we called a \emph{tangential structure} (as opposed to pair).
    \end{rmk}
    In Example \ref{ex: tang str}, a $\Phi$-orientation on a Liouville domain $X$ is a stable framing; we may modify the construction to be able to obtain similar Fukaya-categorical structures when $X$ only has a polarisation:
    \begin{defn}\label{defn: tang str pol}
        Let $f: F \to \widetilde{U/O}$ be a map of commutative $\cI$-monoids. Let $\Theta_F = BO \times F$, $\Phi_F=BO$, equipped with the maps shown in the right square of the following commutative diagram, with rows homotopy fibration sequences:
        \begin{equation}\label{eq: iregneirgn}
            \xymatrix{
                F
                \ar[rrr]_{\pt \times \id} 
                \ar[d]_f
                &&&
                BO \times F 
                \ar[rrr]_{Proj_1}
                \ar[d]_{Proj_1 + (Re \circ f)} 
                &&&
                BO
                \ar[d]_{\cdot \otimes \bC} 
                \\
                \widetilde{U/O}
                \ar[rrr]_{Re}
                &&&
                BO
                \ar[rrr]_{\cdot \otimes \bC} 
                &&&
                BS_\pm U
            }
        \end{equation}
        where $Proj_1$ denotes projection to the first factor.

        We call this $(\Theta_F, \Phi_F)$ the \emph{polarised tangential pair} of $F$. 
    \end{defn}
    \begin{ex}\label{ex: UOor}
        We let $\widetilde{U/O}^{or}$ be the commutative $\cI$-monoid obtained from $\widetilde{U/O}$ by taking 2-connected cover for each $n \in \cI$. Note that for each $n$, we have a homotopy fibre sequence:
        \begin{equation}
            \widetilde{U/O}^{or}(n) \to \widetilde{U/O}(n) \xrightarrow{w_2 \circ Re} K(\bZ/2,2)
        \end{equation}
        where the second map classifies the second Stiefel-Whitney class of a totally real subbundle of $\bC^n$.

        For a commutative $\cI$-monoid $F$ over $\widetilde{U/O}^{or}$, the polarised tangential pair of $F$ is oriented, since $\pi_2\widetilde{U/O}^{or}=0$.
    \end{ex}

\subsection{Abstract discs: fixed domain case}\label{sec:abst disc fix case}
    In this section, using Cauchy-Riemann operators, we define a model for $BO$ as an infinite loop space, along with a universal bundle over it. We use the models and language developed in the previous sections to describe it. 

    Consider the standard Riemann disk with a single puncture, $D^2 \setminus \{1\}$. We choose an \emph{output strip-like end} near 1 (cf. \cite[Section 7.3]{PS}); explicitly, this is a biholomorphism $\eps$ from $\bR_{\geq 0} \times [0,1]$ onto a neighbourhood of the puncture. We choose a Riemannian metric on $D^2 \setminus \{1\}$, standard over the strip-like end.
    \begin{defn}\label{def: std pun data}
        Consider the trivial vector bundle $\bC \times (\bR_{\geq 0} \times [0,1]) \to \bR_{\geq 0} \times [0,1]$, and the real subbundle $\bR \times (\bR_{\geq 0} \times \{1\}) \sqcup i\bR \times (\bR_{\geq 0} \times \{0\}) \to \bR_{\geq 0} \times \{0, 1\}$.

        We define a map of pairs $(\widetilde e, \widetilde f): (\bR_{\geq 0} \times [0,1], \bR_{\geq 0} \times \{0,1\}) \to (BO(1), BS_\pm U(1))$ compatibly classifying the above vector bundles as follows. 
        
        We set $\widetilde e$ to be constant at the basepoint. Let $\gamma: [0,1] \to {U/O}(1)$ be the path given by $\gamma(t) = e^{\frac{i\pi}2} \cdot i\bR$, and $\widetilde{\gamma}$ its unique lift to the universal cover $\widetilde{U/O}(1)$ sending $0$ to the basepoint. We take $\widetilde f$ to be the composition of $\widetilde \gamma$ with the map $\widetilde{U/O}(1) \to BO(1)$.

        We call this choice of vector bundle and pair of classifying maps the \emph{standard puncture datum}.
    \end{defn}

    \begin{defn}\label{def: fix dom case}
        For each $v, n \in \cI$, there is a Thom simplicial set $(\bU(v, n)_\bullet, \bV(v, n))$, defined as follows. A $0$-simplex in $\bU(v,n)_\bullet$ consists of the following data:
        \begin{itemize}
            \item A complex vector bundle $E$ over $D^2\setminus\{1\}$ of rank $n$. 
            
            \item A real subbundle over the boundary $F \subseteq E|_{\partial D}$, cf. Figure \ref{Fig:abstract disc}. 

            The pair $(E, F)$ is called the \emph{bundle pair}; $E$ is the \emph{complex part} and $F$ the \emph{real part}.

            We call $n$ the \emph{rank}.

            \item A classifying map $D^2\setminus \{1\} \to BS_\pm U(n)$ for $E$.

            \item A classifying map $\partial D^2 \setminus 1 \to BO(n)$ for $F$, compatible with the classifying map for $E$. 

            We require that the classifying maps agree with (the $n$-fold direct sum of) the standard puncture data over the strip-like end.

            \item A 1-form $Y \in \Omega^{0,1}(E)$, and a Hermitian connection (and metric) $\nabla$ on $E$.
            
            We require some additional data and conditions (cf. \cite[Definition 3.9]{PS2} for details). We define a \emph{Cauchy-Riemann operator}:

            \begin{equation}
                D^{CR} = \overline\partial^\nabla + Y: \Gamma_{W^{2,\kappa}}\left(D^2\setminus \{1\}, E, F\right) \to \Omega^{0,1}_{W^{2,\kappa-1}}\left(D^2\setminus \{1\}, E\right)
            \end{equation}
            where the subscripts $\cdot_{W^{2,\cdot}}$ denote passing to suitable Sobolev completions. This operator is always Fredholm, and in particular its cokernel is finite-dimensional.

            \item A linear map $f: \bR^v \to \Omega^{0,1}_{W^{2,\kappa-1}}(D^2\setminus \{1\}, E)$, such that $D^{CR} + f$ is surjective. 
        \end{itemize}
        
        A $k$-simplex consists of a smoothly-varying family $(E_t, F_t, Y_t, \ldots)_{t \in \Delta^k}$ of all of the above data, parametrised by the standard $k$-simplex $\Delta^k$.

        We call $\bU(v,n)$ (and its variants we encounter later) a \emph{space of abstract discs}; we think of a 0-simplex as such an abstract disc.

        The vector bundle $\bV(v,n)$ over $|\bU(v,n)_\bullet|$ is defined so that for a simplex $(\sigma_t)_{t \in \Delta^k} = (E_t, F_t, \ldots)_{t \in \Delta^k}$ as above, the restriction of $\bV(v,n)$ to the realisation of $(\sigma_t)_t$ has fibre over $t \in \Delta^k$ given by the kernel of the associated Cauchy-Riemann operator constructed with the Cauchy-Riemann data over $t$, $\ker(D^{CR}_t)$. The total space has a natural topology on it, hence forming a vector bundle.
    \end{defn}

\begin{figure}[ht]
\begin{center}
\begin{tikzpicture}

\draw[semithick] (2,0.5) arc (-90:-180:0.25);
\draw[semithick] (1.75,0.75) arc (0:180:1.25);
\draw[semithick] (2,-0.5) arc (90:180:0.25);
\draw[semithick] (1.75,-0.75) arc (0:-180:1.25);
\draw[semithick] (-0.75,-0.75) -- (-0.75,0.75);
\draw[semithick] (2,0.5) -- (2.5,0.5);
\draw[semithick,dashed] (2.5,0.5) -- (3,0.5);
\draw[semithick] (2,-0.5) -- (2.5,-0.5);
\draw[semithick,dashed] (2.5,-0.5) -- (3,-.5);
\draw[semithick,dotted] (2,-0.5) -- (2,0.5);
\draw (2.5,0) node {$\mathbb{C}^n$};
\draw (3.5,-0.5) node {$i\mathbb{R}^n$};
\draw (3.5,0.5) node {$\mathbb{R}^n$};
\draw[decoration = {calligraphic brace, mirror, amplitude=5pt}, decorate] (2,-0.7) -- (3,-0.7);
\draw (2.5,-1.05) node {$\varepsilon$};

\draw (0.5,0) node {$E$};
\draw (-0.2,2.25) node {$F$};

\end{tikzpicture}
\end{center}
\caption{Boundary conditions of an abstract disc\label{Fig:abstract disc}.}
\end{figure}
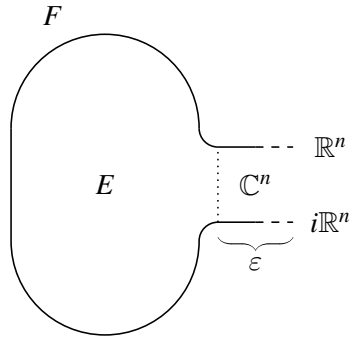
    \begin{rmk}
        Together, the classifying maps of the bundle pair and the puncture datum define a map of pairs $(D^2\setminus\{1\}, \partial D^2\setminus\{1\}) \to (BS_\pm U(n), BO(n))$.
    \end{rmk}
    \begin{defn}
        Let $[v] \to [v']$ be an injection. There is an induced map $\bU(v,n)_\bullet \to \bU(v',n)_\bullet$, defined as follows. This sends a simplex $(\sigma)_t = (E_t, F_t, Y_t, \nabla_t, f_t, \ldots)$ to $(E_t, F_t, Y_t, \nabla_t, f'_t, \ldots)$, where $f'_t: \bR^v = \bR^{v'} \oplus \bR^{v-v'} \to \Omega^{0,1}_{W^{2,\kappa-1}}(D^2\setminus \{1\}, E)$ is obtained from $f$ by precomposing with projection to $\bR^{v'}$. All the extra auxiliary data is taken to be the same.

        There is a natural isomorphism between the pullback of $\bV(v',n)$ and $\bV(v,n) \oplus \bR^{v'-v}$.

        Thse maps assemble to endow the pair $(\bU(\cdot, n)_\bullet, \bV(\cdot, n))$ with the structure of a Thom $\cI$-simplicial set.
    \end{defn}

    \begin{defn}
        Let $[v] \sqcup [v'] \to [v'']$ and $[n] \sqcup [n] \to [n'']$ be isomorphisms in $\cI$ (i.e. bijections). We define a map of Thom simplicial sets:
        \begin{equation}
            \oplus: \left(\bU(v, n)_\bullet, \bV\right) \times \left(\bU(v',n')_\bullet, \bV\right) \to \left(\bU(v'',n'')_\bullet, \bV\right)
        \end{equation}
        This takes two abstract discs (or families of them, parametrised by $\Delta^k$), and direct sums all pieces of data. The classifying map for the complex (resp. real) parts of the boundary conditions is given by the sum of those on the summands, using the map $BS_\pm U(n) \times BS_\pm U(n') \to BS_\pm U(n'')$ (resp. $BO(n) \times BO(n') \to BO(n'')$) induced by the given isomorphism in $\cI$.
    \end{defn}
    
    \begin{defn}\label{def: stab abst disc}
        We fix, once and for all, $D_0 \in  \bU(0,1)_0$ a choice of abstract disc, with $E = \bC$ trivial (note the space of choices of $F$ is given by the space of loops in the universal cover of $U/O(1)$ and is therefore contractible; the space of further choices is contractible too: \cite[Theorem C.1.10]{McDuff-Salamon} implies the Cauchy-Riemann operator is automatically surjective, without stabilisation). Note the index vector space $\bV_{D_0}$ (i.e. the fibre of $\bV$ over the 0-simplex $D_0$) is 0.

        Let $f: [n] \to [n']$ be an injection. Let $D_0^{\oplus n'-n}$ be the 0-simplex in $\bU(0,n'-n)$ given by applying $\oplus$ to $(n'-n)$ copies of $D_0$. We define a map of Thom simplicial sets:
        \begin{equation}
            \left(\bU(v,n)_\bullet, \bV\right) \to \left(\bU(v,n')_\bullet, \bV\right)
        \end{equation}
        given by applying $\oplus$ to the bijection $[n] \sqcup [n'-n] \to [n']$ which is $f$ on the first factor (and can be anything in the second factor, it does not affect the construction), and inserting $D_0^{\oplus n'-n}$.

        This assembles to make $(\bU(v, \cdot), \bV)$ into an $\cI$-object in $\Thom$.
    \end{defn}
    From \cite[Proposition 11.13]{Seidel:book}, we may compute the rank of the index bundle:
    \begin{lem} \label{lem:rank of index bundle}
        The rank of the vector bundle $\bV(v,n)$ is $v$.
    \end{lem}
    The above structures commute with each other in the appropriate sense. Combining them, we obtain:
    \begin{prop} \label{prop: wrifjeogiejoigv}
        $\left(\bU(\cdot, \cdot)_\bullet, \bV\right)$ forms a commutative monoid in the category of $\cI$-objects in the category of Thom $\cI$-simplicial sets.

        In particular, taking the diagonal $v=n$, we obtain a commutative Thom $\cI$-monoid $n \mapsto \left(\bU(n,n)_\bullet, \bV\right)$.
    \end{prop}
    It is this diagonal version that we will use throughout much of the paper: it is much more convenient to deal with a single $\cI$-factor.
    \begin{Not}
        We drop the $\cdot_\bullet$ to denote taking realisation of this simplicial space: for example, $\bU(v,n) := |\bU(v,n)_\bullet|$. 
        
        When it is clear which index bundle we are referring to, we often drop decorations and write $\bV$ for $\bV(v,n)$ (or its variants).
    \end{Not}
    \begin{lem}\label{lem: ev equiv}
        There is a weak equivalence of commutative $\cI$-monoids:
        \begin{equation}
            \bU \to \hofib(\Omega BO \to \Omega BS_\pm U)
        \end{equation}
    \end{lem}
    \begin{proof}
        Sending an abstract disc to the classifying map for the real part of its boundary conditions determines a map $\bU(n) \to \Omega BO(n)$. The classifying map for the complex part determines a nullhomotopy of the composite to $\Omega BS_\pm U(n)$; together, this defines the desired map. That this is an equivalence follows from the proof of \cite[Lemma 3.25]{PS2}: essentially this is because for fixed boundary conditions $(E,F)$, the space of all extra data as in Definition \ref{def: fix dom case} is contractible as $n \to \infty$.
    \end{proof}
    \begin{rmk}
        All the above constructions work if $BO$ and $BS_\pm U$ are replaced with $BO$ and $BU$ respectively. In that case, \cite[Lemma 3.27]{PS2} shows the classifying map for $\bV$, $\bU \to BO$, is an equivalence. Combined with Lemma \ref{lem: ev equiv}, this provides an index-theoretic proof of Bott periodicity $\Omega U/O \simeq BO$ as infinite loop spaces We elaborate on this further in Section \ref{sec: cat asp bott}.
    \end{rmk}
    \begin{defn}\label{def: E Psi}
        Let $\Psi = (\Theta, \Phi)$ be a commutative tangential pair. We define $\bU^\Psi$ to be the homotopy pullback of the following diagram:
        \begin{equation}\label{eq: hopullb}
            \xymatrix{
                \bU^\Psi 
                \ar[r]
                \ar[d]
                &
                \bU
                \ar[d]
                \\
                \hofib(\Omega \Theta \to \Omega \Phi) 
                \ar[r]
                &
                \hofib(\Omega BO \to \Omega BS_\pm U)
            }
        \end{equation}
        and $\bV^\Psi(n)$ to be pullback of $\bV(n)$. The pair $(\bU^\Psi(\cdot), \bV^\Psi)$ forms a commutative Thom $\cI$-monoid, which we write as $E_\Psi$. 
        
        Here $\bU$ is the ``diagonal'' $\cI$-monoid as in Proposition \ref{prop: wrifjeogiejoigv}, sending $n \mapsto |\bU(n,n)_\bullet|$.

    \end{defn}
    Note that the left vertical arrow of (\ref{eq: hopullb}) is an equivalence. 
    \begin{rmk}\label{rmk: I V}
        Let $\cV$ be the topologically enriched category of finite-dimensional real vector spaces equipped with inner products, and morphisms given by isometric embeddings. $\cI \subseteq \cV$ embeds a subcategory, by sending $[n]$ to $\bR^{[n]}$. 
    
        Throughout this section we worked with diagrams of spaces indexed by $\cI$ rather than $\cV$; one reason for this is to stay closer to the framework of \cite{Schlichtkrull}, which will be helpful in Section \ref{sec: OC}. In \cite{PS2} we indexed all constructions by $\cV$. All the constructions given in this section work over $\cV$ without issue.

        In particular, if $f,f': [n] \to [m]$ are two injective maps and $n<m$, then the maps $\bU^\Psi(n) \to \bU^\Psi(m)$ induced by $f$ and $f'$ are homotopic: using the above observation, it follows from the fact that the two maps $\bR^n \to \bR^m$ induced by $f$ and $f'$ are homotopic through isometric embeddings.
    \end{rmk}
    \begin{rmk}
        It follows from \cite[Lemma 3.24]{PS2} that $\bU^\Psi$, and hence $E_\Psi$, are convergent.
    \end{rmk}

\subsection{Categorical aspects of Bott periodicity}\label{sec: cat asp bott}

    We define the category of \emph{oriented polarised tangential structures} to be the category of simply connected commutative $\cI$-monoids $F$ over $\widetilde{U/O}^{or}$.  Definition \ref{defn: tang str pol} determines a functor $F \mapsto \Psi^F$ from polarised tangential structures to oriented commutative tangential pairs.

    In practice, sometimes the bordism theory we would like to study is given to us (in the form of a commutative Thom $\cI$-monoid, say) instead of the commutative tangential pair. In this subsection we prove the following proposition, which allows us to go back and forth between these two notions, at least in the polarised case. This is important for applications in \cite{PS4}.

    \begin{prop}\label{prop: mode baas sull}

        The assignment $F \mapsto E_{\Psi^F}$ defines a functor from oriented polarised tangential structures to connected oriented commutative Thom $\cI$-monoids, which induces an equivalence on homotopy categories.
    \end{prop}
    The proof involves combining existing results in the literature.

    We write $\cC\cS^\cI$ for the category of commutative $\cI$-monoids. For $X \in \cC\cS^\cI$, we write $\cC\cS^\cI_{/X}$ for the slice category of objects over $X$.
    
    $\cC\cS^\cI$ admits a model structure with weak equivalences those that induce isomorphisms on all homotopy groups \cite[Proposition 3.1]{Sagave-Schlichtkrull:Gp}. $\cC\cS^\cI$ is Quillen equivalent to (and hence has an equivalent homotopy category to) the category of $\bE_\infty$-algebras in spaces \cite[Theorem 1.2]{Sagave-Schlichtkrull:Diagram}.
    
    The slice category of objects over $X$ inherits a model category structure too, with weak equivalences (respectively fibrations, cofibrations) those whose underlying map in $\cC\cS^\cI$ is a weak equivalence (respectively fibration, cofibration) \cite[Theorem 7.6.5]{Hirschhorn:book}. 

    The functor $\Omega(\cdot): \cC\cS^\cI \to \cC\cS^\cI$ given by taking based loop spaces levelwise has a left adjoint $\bB$, which together form a Quillen adjunction \cite[Corollary 4.9]{Sagave-Schlichtkrull:Gp}. $\Omega$ shifts homotopy groups up by one on fibrant objects \cite[Lemma 2.4]{Sagave-Schlichtkrull:Gp}, $\bB$ shifts homotopy groups down by one on cofibrant objects \cite[Proposition 4.2]{Sagave-Schlichtkrull:Gp}. They therefore induce a Quillen equivalence (cf. \cite[Definition 1.3.12]{Hovey}), and hence an equivalence of homotopy categories, between connected commutative $\cI$-monoids and all commutative $\cI$-monoids.

    \begin{rmk}
        By the above arguments, there is an equivalence $\Omega \bB O \simeq O$ of commutative $\cI$-monoids. By applying the $\Omega-B$ adjunction levelwise there is also an equivalence $\Omega BO \simeq O$. It follows that there is an equivalence $BO \simeq \bB O$.
    \end{rmk}

    \begin{proof}[Proof of Proposition \ref{prop: mode baas sull}]
        We write $E$ as a composition of several functors, and argue that they all induce equivalences on homotopy categories:
        \begin{enumerate}
            \item Applying the functor $\Omega(\cdot)$.
            
            \item Taking homotopy pullback as in (\ref{eq: hopullb}) (but with $\bU^{or}$ instead of $\bU$).

            \item The above gives some object in $\cC\cS^\cI_{/\bU^{or}}$; we then push forwards along the classifying map for the index bundle $\bU^{or} \to BSO$.
        \end{enumerate}

        As observed above, \cite{Sagave-Schlichtkrull:Gp} shows (1) induces an equivalence on homotopy categories. (2) is obtained by applying fibrant replacement, follows by base-change along an equivalence, and hence induces an equivalence of homotopy categories by \cite[Proposition 2.7]{Rezk}. \cite[Proposition 2.7]{Rezk} also implies (3) induces an equivalence on homotopy categories (note $\cC\cS^\cI$ is proper \cite[Proposition 3.5]{Sagave-Schlichtkrull:Diagram}).
    \end{proof}

\subsection{Abstract discs}\label{sec:abstract discs and the Thom I-space}
    In order to connect to previous work in \cite{PS2}, we will use some generalisations of the construction in Section \ref{sec:abst disc fix case}.

    \begin{defn}\label{def: RSij}
        We define $\cR\cS_{ij}$ to be the simplicial set whose $k$-simplices consist of a smooth family of Riemann surfaces $D \to \Delta^k$, parametrised by the standard $k$-simplex. Each fibre is the complement of $i+j$ boundary points in a nodal Riemann disc (one whose smoothing is biholomorphic to $D^2$), for some\footnote{We will mostly care about the cases $j\in \{0,1\}$.} $i, j\geq 0$; $i$ of the boundary punctures are ordered anticlockwise and labelled incoming, and the remaining $j$ are ordered similarly and are labelled outgoing. Over each open stratum of $\Delta^k$ the topological type of the fibres is constant. The fibres are allowed to develop more nodes along deeper boundary strata.  Each simplex additionally includes (i) when $j=0$,  a marked point $p_x$ in the interior of each $D_x$ with a distinguished direction in $T_{p_x} D_x$, varying continuously in $x$, and which is required to ``point to the boundary of $D$'' (i.e. not to a puncture: more precisely, the geodesic in that direction hits the boundary $\partial D$ at a non-puncture); (ii) data of strip-like ends near each node and incoming / outgoing strip-like ends at the corresponding boundary punctures, as well as (iii) a metric on the Riemann surface which is standard over the strip-like ends. All this data  satisfies some mild constraints, cf. \cite[Sections 3.2-3.3]{PS2}.
    \end{defn}

    \begin{lem}
        The resulting simplicial sets $\cR\cS_{ij}$ are contractible Kan complexes.
    \end{lem}
    For each $1 \leq l \leq i'$, there is a map $\#_l: \cR\cS_{i1} \times \cR\cS_{i'j} \to \cR\cS_{i+i'-1,j}$ given by concatenating the output of a nodal Riemann disc to the $l^{th}$ input of another. These maps are suitably associative.

    Fix some commutative tangential pair $\Psi = (\Theta, \Phi)$. Let $F$ be the homotopy fibre of the map $\Theta \to \Phi$; $F$ maps naturally to $\widetilde{U/O}$.
    \begin{defn}
        \emph{A $\Psi$-oriented puncture datum} of \emph{rank $N$} consists of:
        \begin{itemize}
            \item A path $\gamma$ from $i\bR^n$ to $\bR^n$ in $U/O(N)$.
            
            Let $\widetilde \gamma$ be the unique lift of this path to the universal cover $\widetilde{U/O}(N)$ sending $1$ to the basepoint.
            \item A homotopy lift of $\widetilde \gamma$ to a path in $F(N)$.
        \end{itemize}
        By taking products with $\bR_{\geq 0}$ (or $\bR_{\leq 0}$), $\Psi$-oriented puncture data determines a map from $\bR_{\geq 0} \times [0,1]$ (or $\bR_{\leq 0} \times [0,1]$) to $\widetilde{U/O}(N)$ and a homotopy lift to $F$.
        
        By abuse of notation we sometimes refer to this latter data as the puncture datum.
    \end{defn}
    \begin{rmk}\label{rmk: 2 pts punc dat}
        Any $\Psi$-oriented puncture datum $\bE$ of rank $N$ determines a pair of points $p_\bE^\pm \in \Theta(N)$, taken to be the given lifts of $i\bR^n$ and $\bR^n$ respectively to $F(N)$, pushed forwards to land in $\Theta(N)$.
    \end{rmk}
    Any puncture data has an associated Maslov index $\mu \in \bZ$ (cf. \cite[Section 11(h)]{Seidel:book}).
    \begin{ex}
        The standard puncture datum from Definition \ref{def: std pun data} gives an example of $(BO, BS_\pm U)$-oriented puncture datum of rank $1$ and Maslov index 0. 
    \end{ex}
    \begin{rmk}
        We may take direct sum of $\Psi$-oriented puncture datum in a natural way: for $\bE$ and $\bE'$ $\Psi$-oriented puncture data of ranks $n$ and $n'$ respectively and a bijection $[n] \sqcup [n'] \to [n'']$, $\bE \oplus \bE'$ is a $\Psi$-oriented puncture datum of rank $n''$.

        For an injection $[n] \to [m]$, we may direct sum $\bE$ with $m-n$ copies of the standard puncture datum to obtain a puncture datum of rank $m$.
        
        Together, this endows the set of $\Psi$-oriented puncture data with the structure of a commutative $\cI$-monoid (in sets).
    \end{rmk}
    The following definition generalises Definition \ref{def: fix dom case} to allow the domain Riemann surface to vary within $\cR\cS_{ij}$ (in particular, it can have more than one puncture), and to incorporate other puncture data. 
    \begin{defn}\label{def: big abst disc defn}
        Let $\Psi$ be a commutative tangential pair. Let $i, j \geq 0$. Let $\bE=(\bE^{in}_1, \ldots, \bE^{in}_i, \bE^{out}_1, \ldots, \bE^{out}_j)$ an ordered tuple of puncture data.
    
        For each $v, n \in \cI$, there is a Thom simplicial set $\bU_{\bE}^{\Psi}(v,n)=(\bU^{\Psi}_{\bE}(v,n)_\bullet, \bV^{\Psi}_{\bE}(v,n))$, constructed as follows. A $k$-simplex in $\bU^{\Psi}_{\bE}(v,n)_k$ consists of:
        \begin{itemize}
            \item A simplex $D \to \Delta^k$ in $\cR\cS_{ij}$.
            \item A complex vector bundle $E$ over $D$ of rank $n$, along with a classifying map $E: D \to BU(n)$.
            \item A real subbundle over the boundary $F \subseteq E|_{\partial D}$, along with a classifying map $F: \partial D \to BO(n)$, compatible with that of $E$ (so $F \otimes \bC \simeq E$ over $\partial D$). 
            \item Compatible homotopy lifts of the classifying maps of $F$ and $E$ to $\Theta$ and $\Phi$.

            We require that over each strip-like end near each boundary puncture, the classifying maps and vector bundles agree with those determined by the puncture data $\bE_x := \bE_l^{in \textrm{ or }out}$ corresponding to each puncture $x$.
            \item A smooth family of 1-forms $Y_x \in \Omega^{01}(D_x, E_x)$, for $x \in \Delta^k$.
    
            We call all the data so far ($\Psi$-oriented) \emph{Cauchy-Riemann data on $D$}. For each fibre over $x \in \Delta^k$, using the Cauchy-Riemann data one can construct a Fredholm\footnote{More precisely: this is Fredholm after passing to appropriate Sobolev completions on both the domain and codomain,  and assuming that the Lagrangian boundary conditions are transverse at all boundary punctures, cf. \cite[Definition 3.10]{PS2}.} operator, called a \emph{Cauchy-Riemann operator}:
            \begin{equation}
                D^{CR}_x = \overline \partial + Y_x: \Gamma(E|_{D_x}, F|_{\partial D_x}) \to \Omega^{01}(D_x, E|_{D_x})
            \end{equation}

            \item A family of linear maps (called \emph{stabilisations}) $f_x: \bR^v \to \Omega^{01}_{D_x} (E|_{D_x})$ such that $f_x$ is compactly supported and $D^{CR}_x + f_x$ is surjective, for all $x$.

        \end{itemize}
    
        We impose mild assumptions on all this data, such as local constancy over the strip-like ends. We call these simplicial sets $\bU_{\bE}^{\Psi}(v,n)_\bullet$ \emph{spaces of abstract discs}.
    \end{defn}

    Forgetting all data except the domain $D$ defines maps $\dom: \bU^\Psi_{\bE}(v,n) \to \cR\cS_{ij}$ for all $v,n$.
 
\begin{rmk}
We will sometimes suppress $\Psi$ from the notation, to mean we work with the universal case $\Psi = (BO \to BS_\pm U)$. 
\end{rmk}

    \begin{lem}
        Each $\bU^\Psi_{\bE}(v, n)_\bullet$ is Kan. 
    \end{lem}
    \begin{proof}
        This is a consequence of a standard linear gluing argument.
    \end{proof}
    A similar argument shows:
    \begin{lem}
        Let $W_\bullet$ be the subsimplicial set of $\bU^\Psi_{\bE}(v, n)_\bullet$ consisting of abstract discs whose domain has no nodes. Then the inclusion $W_\bullet \to \bU^\Psi_{\bE}(v, n)_\bullet$ is an equivalence.
    \end{lem}

    \begin{lem}[{\cite[Lemma 3.18]{PS2}}]
        There is a canonical (up to contractible choice) vector bundle $\bV_{\bE}(v,n)$ over $|\bU_{\bE}(v,n)_\bullet|$, called the \emph{index bundle}.

        Each $(\bU_{\bE}(\cdot, n)_\bullet, \bV_{\bE}(\cdot, n))$ then defines a Thom $\cI$-simplicial set in a natural way. 
        
    \end{lem}
    \begin{rmk}\label{rmk: vect}
        The fibres of the corresponding vector bundle are given by the kernels of the stabilised Cauchy-Riemann operator. Note however that even though the set of fibres is canonical, the structure of a vector bundle is not automatic, this requires construction (and some mild choices).

        The structure of a Thom $\cI$-simplicial set is given by maps which roughly replace the stabilisations $f_x: \bR^v \to \Omega^{01}(\ldots)$ with $f_x \oplus 0: \bR^v \oplus \bR^{w-v} \to \Omega^{01}(\ldots)$.
    \end{rmk}

        Given two sets of Cauchy-Riemann data and stabilisation data $(E,F,Y,f)$ and $(E',F',Y',f')$ on the same $k$-simplex $D \to \Delta^k$ in $\cR\cS_{ij}$, one can take their direct sum $(E\oplus E', F \oplus F', Y \oplus Y', f \oplus f')$ fibrewise over $\Delta^k$, and their index bundles are naturally additive under this operation. This defines maps of Thom simplicial sets: 
        \[
        \oplus: \bU^\Psi_{\bE}(v,n)_\bullet \times_{\cR\cS_{ij}} \bU^\Psi_{\bE'}(v',n')_\bullet \to \bU^\Psi_{\bE \oplus \bE'}(v+v',n+n')_\bullet
        \]
        
        These are compatible with stabilisation in the $v$ direction and so defines maps of Thom $\cI$-simplicial sets 
        \[
        \bU_{\bE}^\Psi(n)_\bullet \times_{\cR\cS_{ij}} \bU^\Psi_{\bE'}(n')_\bullet \to \bU^\Psi_{\bE\oplus\bE'}(n + n')_\bullet
        \]
        
        This is commutative, and if we consider the same construction applied to iterated fibre products, it is associative.

\begin{lem}[{\cite[Lemma 3.31]{PS2}}]\label{lem:wkhtpy}
    Let $\bE_{01}$ be a tuple of $\Psi$-oriented puncture data with no inputs and 1 output.

    There are weak equivalences of $\cI$-spaces $\bU_{\bE_{01}}(\cdot, n) \to \Omega U/O (n)$ and, more generally, $\bU^\Psi_{\bE_{01}}(\cdot, n) \to \Omega F$ (viewing the codomain as constant $\cI$-spaces in both cases).
\end{lem}

\begin{lem}
    Let $\bE_{01}$ be as in Lemma \ref{lem:wkhtpy}.

    The inclusion 
\[
\bU_{\bE_{01}}(n) \times_{\cR\cS_{01}} \bU_{\bE_{01}}(n') \to \bU_{\bE_{01}}(n) \times \bU_{\bE_{01}}(n')
\]
is an equivalence of Thom $\cI$-simplicial sets.
\end{lem}

\begin{proof}
    This follows from \cite[Lemma 3.31]{PS2}.
\end{proof}
    \begin{rmk}
       One can thus attempt to define a product 
       \[
       \bU_{\bE_{01}}(n) \times \bU_{\bE_{01}}(n') \to \bU_{\bE_{01}}(n+n')
       \]
       via a zig-zag:
        \begin{equation}\label{eq: zdfgegwsed}
            \bU_{\bE_{01}}(n) \times \bU_{\bE_{01}}(n') \xleftarrow{\simeq} \bU_{\bE_{01}}(n) \times_{\cR\cS_{01}} \bU_{\bE_{01}}(n') \xrightarrow{\oplus} \bU_{\bE_{01}}(n+n')
        \end{equation}
        This is useful as a way to think about this operation, but we do not give it an axiomatic treatment here.
    \end{rmk}

We may also concatenate puncture data as follows. Given sets of puncture data $\bE = (\bE^{in}_1, \ldots, \bE^{in}_i, \bE^{out})$ and $\bF = (\bF^{in}_1, \ldots, \bF_{i'}^{in}, (\bF^{out}))$ (we use the brackets $(\bF^{out})$ do indicate that we omit $\bF^{out}$ if appropriate) and some $1 \leq l \leq i'$, we may form a new set of puncture data:
\begin{equation}
    \bE \#_l \bF = \left(\bF^{in}_1, \ldots, \bF^{in}_{l-1}, \bE^{in}_1, \ldots, \bE^{in}_i, \bF^{in}_{l+1}, \ldots, \bF^{in}_i, (\bF^{out})\right)
\end{equation}

Given a pair of abstract discs $\bD \in \bU^\Psi_{\bE}(v, n)$ and $\bD' \in \bU^\Psi_{\bF}(v', n)$, and some $1 \leq l \leq i'$ such that the puncture data at the output of $\bD$ agrees with that of the $l^{th}$ input of $\bD'$, we may form a new abstract disc:
\begin{equation} 
    \bD \#_l \bD' \in \bU^\Psi_{\bE\#_l \bF}(v+v', n)
\end{equation}
This can be upgraded to a map of simplicial sets, and can be made compatible with the index bundles:
\begin{lem}[{\cite[Lemma 3.18]{PS2}}]
    Let $[v] \sqcup [v'] \to [v'']$ be an injection. We write $v''-v-v'$ for the complement of the image.

    There are maps of Thom simplicial sets:
    \begin{equation}
        \#_l: \left(\bU^\Psi_{\bE}(v, n), \bV\right) \times \left(\bU^\Psi_{\bE'}(v', n), \bV\right) \oplus \bR^{v''-v-v'} \to \left(\bU^\Psi_{\bE \#_l \bE'}(v'', n), \bV\right)
    \end{equation}

    These maps are suitably associative, and commute with stabilisation in the $v$-direction. Furthermore they are compatible with the concatenation maps $\#_l$ on the $\cR\cS_{ij}$ under the map $\dom$.
\end{lem}

\begin{rmk}
    Though it does not require any choices to define the set of fibres of the index bundles $\bV$, to endow it with the structure of a vector bundle requires making some contractible choices. The proof of \cite[Lemma 3.18]{PS2} makes these choices via an inductive scheme over simplices, so that they can be made compatibly with concatenation.
\end{rmk}

By construction, the concatenation maps $\#_l$ are also compatible with direct sum in the $n$-direction, as well as stabilisation in the $v$ direction.

\begin{lem}[{\cite[Section 3.3]{PS2}}]
    Let $\bE=(\bE^{in}_1, \ldots, \bE^{in}_i, \bE^{out}_1, \ldots, \bE^{out}_j)$ be a choice of puncture data. Then:
    \begin{equation}\label{eq: rank}
        \operatorname{Rank}(\bV^\Psi_{\bE}(v,n)) = v+\mu(\bE) + n(1-j)
    \end{equation}
    
    where $\mu(\bE)$ is the \emph{total Maslov index} $\sum_l\mu(\bE^{in}_l) - \sum_k\mu(\bE^{out}_k)$.
\end{lem}

Let $\bE^{std, fr}$ be some choice of rank one framed puncture data of Maslov index 0, and $\bE^{std, fr}(n)$ its $n$-fold direct sum. Let $\bE^{std, \Psi}(n)$ be the induced $\Psi$-oriented puncture data (of any rank $n$). Let $\bE^{std, \Psi}_{ij}(n)$ be the tuple of puncture data with $i$ inputs and $j$ outputs, all of which are $\bE^{std, \Psi}(n)$. We write $\bU^\Psi_{ij}(v, n)$ for $\bU^\Psi_{\bE^{std,\Psi}_{ij}(n)}(v,n)$.

\begin{rmk}\label{rmk: fix dom case}
    Note that the space $\bU^\Psi(n)$ from Section \ref{sec:abst disc fix case} is a subspace of $\bU^\Psi_{01}(n, n)$: it is a fibre of $\dom$.
\end{rmk}

The maps $\dom$ have coherent sections:
\begin{lem}\label{lem: cr sect}

    There are sections $st=st_{ij}: \cR\cS_{ij} \to \bU^\Psi_{ij}(v, n)$ of $\dom$.
    These can be chosen to be compatible with concatenation $\#_l$.

\end{lem}
When $j=0$, by (\ref{eq: rank}) we have that $\bV^\Psi_{ij}(0,n) = 0$, so the maps $st$ are automatically maps of Thom spaces.
\begin{proof}
    We carry this out in the case $v=0$; by stabilising the general case then follows.

    Because Cauchy-Riemann operators are not necessarily surjective, $\bU^{fr}_{ij}(0,n)$ is not necessarily contractible. However, in the case $n=1$, \cite[Theorem C.1.10]{McDuff-Salamon} implies that it is contractible. Hence we may choose sections $st: \cR\cS_{ij} \to \bU^{fr}_{ij}(0,1)$ for all $i,j$, and the space of such choices is contractible. By making these choices over simplices inductively over the difference between the maximum and minimum number of nodes of the Riemann surface (similarly to the proof of \cite[Lemma 3.18]{PS2}), we may choose these to be compatible with concatenation, as required.

    By taking iterated direct sums, we obtain the desired maps in the case $n > 1$; by composing with the map of commutative tangential pairs $fr \to \Psi$, we obtain them for any $\Psi$.
\end{proof}

Using this, when there are $j=0$ outputs, there are maps of Thom simplicial sets for each injection $[n] \to [n']$:
\begin{equation}
    \left(\bU^\Psi_\bE(v, n)_\bullet, \bV\right) \to \left(\bU^\Psi_{\bE \oplus \bE^{std, \Psi}_{ij}(n'-n)}(v, n')_\bullet, \bV\right)
\end{equation}
On base spaces, this sends a simplex $\sigma \mapsto \sigma \oplus st\circ\dom(\sigma)$; there is a canonical isomorphism of vector bundles covering this coming from the fact that the index bundle over the codomain of $st_{\bE^{std, \Psi}_{ij}(n)}$ is $0$ when $j=0$. These maps are compatible with all other structures constructed in this section thus far.

There is a similar story when there are $j=1$ outputs. In this case, using a similar inductive argument, we can choose trivialisations $\bV^{fr}_{i0}(0,1) \cong \bR$ compatibly with concatenation, and hence obtain maps of Thom spaces $st: (\cR\cS_{i0}, \bR) \to (\bU^{fr}_{i0}(0,1), \bV)$. Direct summing with these maps, similarly to above (but now with $j=1$), provides us maps of Thom simplicial sets for each injection $[n] \to [n']$:
\begin{equation}
    \left(\bU^\Psi_\bE(v,n)_\bullet, \bV\right) \oplus \bR^{n'-n} \to \left(\bU^\Psi_{\bE \oplus \bE^{std, \Psi}_{ij}(n'-n)}(v, n')_\bullet, \bV\right)
\end{equation}

\begin{rmk}
    All structures constructed in this section are functorial in $\Psi$.
\end{rmk}

\subsection{Homotopy types of spaces of abstract discs}\label{sec: htpy type}
    Let $\Psi=(\Theta \to \Phi)$ be a commutative tangential pair.
    \begin{defn}
        Let $\bE = (\bE^{in}_1, \ldots, \bE^{in}_i, (\bE^{out}))$ be a tuple of puncture data with $i$ inputs and $j \in \{0,1\}$ outputs. Each puncture datum $\bF \in \bE$ determines a pair of points $p_\bF^\pm$ in $\Theta(n)$, whose images in $\Phi(n)$ agree, as in Remark \ref{rmk: 2 pts punc dat}.
        
        If $\bE$ is non-empty (i.e. $i+j \neq 0$), we define $\Omega_\bE \Theta(n)$ to be the space of $(i+j)$ paths in $\Theta(n)$ with endpoints specified as follows; cf. Figure \ref{fig: qqq}. These are paths $p^-_{\bE^{in}_l}$ to $p^{+}_{\bE^{in}_{l+1}}$ for $0 \leq l \leq i-1$; then if $j=0$, a path $p^-_{\bE^{in}_i}$ to $p^+_{\bE^{in}_0}$, and if $j=1$ two paths: one from $p^-_{\bE^{in}_i}$ to $p^-_{\bE^{out}}$ and one from $p^+_{\bE^{out}}$ to $p^+_{\bE^{in}_1}$.

\begin{figure}[ht]
\begin{center}
\begin{tikzpicture}

\draw[fill] (3,1) circle (0.1);
\draw[fill] (3,-1) circle (0.1);
\draw[fill] (-3,3) circle (0.1);
\draw[fill] (-3,2) circle (0.1);
\draw[fill] (-3,1) circle (0.1);
\draw[fill] (-3,-1) circle (0.1);
\draw[fill] (-3,-2) circle (0.1);
\draw[fill] (-3,-3) circle (0.1);

\draw (-2.5,0) circle (0.05);
\draw (-2.5,0.25) circle (0.05);
\draw (-2.5,-0.25) circle (0.05);

\begin{scope}[thick,decoration={
    markings,
    mark=at position 0.5 with {\arrow{triangle 60}}}
    ] 

\draw[semithick,rounded corners=10mm,postaction={decorate}] (-3,3) -- (0,3) -- (0,1) -- (3,1);
\draw[semithick,rounded corners=10mm,postaction={decorate}] (-3,-3) -- (0,-3) -- (0,-1) -- (3,-1);
\draw[semithick, rounded corners=5mm,postaction={decorate}] (-3,2) -- (-2,2) -- (-2,1) -- (-3,1);
\draw[semithick, rounded corners=5mm,postaction={decorate}] (-3,-1) -- (-2,-1) -- (-2,-2) -- (-3,-2);
\end{scope}

\draw (4,1) node {$p^+_{\bE^{out}}$};
\draw (4,-1) node {$p^-_{\bE^{out}}$};

\draw (-4,3) node {$p^+_{\bE_1^{in}}$};
\draw (-4,2) node {$p^-_{\bE_1^{in}}$};
\draw (-4,1) node {$p^+_{\bE_2^{in}}$};
\draw (-4,-2) node {$p^+_{\bE_i^{in}}$};
\draw (-4,-3) node {$p^-_{\bE_i^{in}}$};

\end{tikzpicture}
\end{center}
\caption{Schematic picture of an element of $\Omega_\bE \Theta(n)$
          \label{fig: qqq}}
\end{figure}
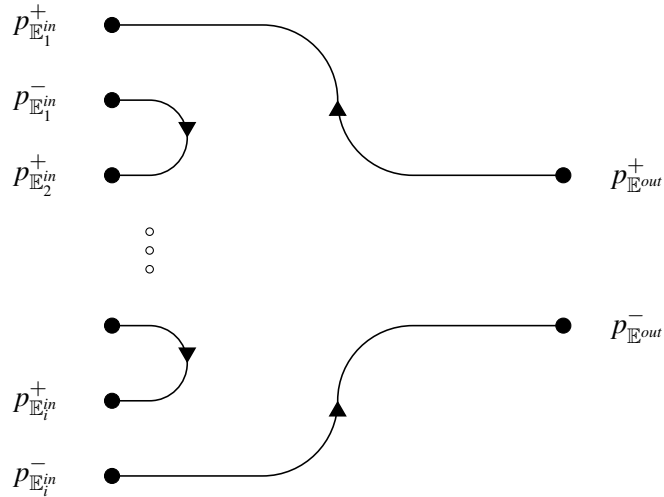

        $\Omega_\bE \Theta(n)$ admits a natural map to the free loop space $\cL \Phi(n)$, by composing all the paths with the map $\Theta(n) \to \Phi(n)$, and then concatenating them. We define $\Psi_\bE(n)$ to be the space of pairs $(\gamma, d)$, where $\gamma \in \Omega_\bE(\Theta(n))$ and $d: D^2 \to \Phi(n)$ is a filling of the corresponding loop in $\cL \Phi(n)$.
        
        If $\bE$ is empty, we define: $\Psi_{00}(n) = \Psi_{\emptyset}(n)$ to be the homotopy pullback of:
        \begin{equation}
            \xymatrix{
                &
                \Phi(n)
                \ar[d]
                \\
                \cL \Theta(n)
                \ar[r]
                &
                \cL \Phi(n)
            }
        \end{equation}  
        where the vertical arrow is the inclusion of constant loops.
    \end{defn}
    \begin{rmk}
        For $\bE$ non-empty, $\Omega_\bE \Theta(n)$ is non-canonically homotopy equivalent to $(\Omega \Theta(n))^{i+j}$. Additionally choosing a distinguished puncture determines an equivalence:
        \begin{equation}
            \Psi_\bE(n) \to \hofib(\Omega_\bE \Theta(n) \to \Omega \Phi(n))
        \end{equation}
        Note that the homotopy types of these spaces only depends on $i$ and $j$.
    \end{rmk}
    Concatenation defines natural maps:
    \begin{equation}
        \#_l: \Psi_\bE(n) \times \Psi_{\bE'}(n) \to \Psi_{\bE \#_l \bE'}(n)
    \end{equation}
    \begin{rmk}
        Thus far, none of the constructions in Section \ref{sec: htpy type} have used the monoidal structure on $\Theta$ or $\Phi$.
    \end{rmk}
    For a bijection $[n] \sqcup [n'] \to [n+n']$, the monoid structures on $\Theta$ and $\Phi$ determine maps:
    \begin{equation}
        \oplus: \Psi_{\bE}(n) \times \Psi_{\bE'}(n') \to \Psi_{\bE \oplus \bE'}(n+n')
    \end{equation}
    when $\bE$ and $\bE'$ have the same numbers of inputs and outputs.

    \begin{rmk}
        These constructions simplify in the case $\Phi = \pt$, and $\Theta$ comes from some $F \to \widetilde{U/O}$ as in Example \ref{ex: tang str}. In this case, if $\bE$ is non-empty, $\Psi_\bE(n)$ is equivalent to $(\Omega F(n))^{i+j}$, $\#_l$ concatenates loops in an appropriate way, and $\Psi_\emptyset(n) \simeq \cL F(n)$.
    \end{rmk}

    \begin{defn}
        There are maps of spaces:
        \begin{equation}
            ev=ev^\Psi_\bE(v, n): \bU^\Psi_\bE(v, n) \to \Psi_\bE(n)
        \end{equation}
        defined by sending an abstract disc to the given homotopy lifts of its classifying maps (cf. Definition \ref{def: big abst disc defn}) to $\Theta$ and $\Phi$.
    \end{defn}
    The maps $ev$ are compatible with all the relevant structures (direct sum, stabilisation in the $v$ direction, and concatenation).

    Generalising Lemma \ref{lem:wkhtpy}, we have:

    \begin{lem}[{\cite[Lemma 3.31]{PS2}}]
        The connectivity of the map $ev^\Psi_\bV(v, n)$ goes to $\infty$ as $v \to \infty$.

        In particular, $ev^\Psi_\bV(\cdot, n)$ defines an equivalence of $\cI$-spaces from $\bU^\Psi_\bE(\cdot, n)$ to $\Psi_\bE(n)$, viewing the codomain as a constant $\cI$-space.
    \end{lem}

    \begin{rmk}
        The theory still makes sense without the final piece of data in Definition \ref{def: big abst disc defn}, but is useful for our purposes so that the resulting homotopy type is ``correct'': for example, without it, if $\Phi \simeq \pt$, we would have a highly-connected (as $v \to \infty$ map to the homotopy orbits $\bU_{00}^{\Theta,\pt}(v, n) \to (\cL \Theta(n))_{hS^1}$, instead of $\cL \Theta(n)$.
    \end{rmk}

\section{Flow categories over an $\cI$-monoid}\label{sec: flow}

Let $E$ denote a Thom $\cI$-monoid (i.e. a monoidal object in Thom $\cI$-spaces), which is furthermore commutative.

\subsection{Unoriented flow categories}

An unoriented flow category $\cF$ comprises
\begin{enumerate}
    \item a finite set of objects $x \in \cF$;
    \item for $x,x' \in \cF$ a smooth manifold with corners $\cF_{xx'}$;
   \item embeddings $\cF_{xx'} \times \cF_{x'x''} \to \cF_{xx''}$ whose images cover and enumerate the codimension one boundary strata. 
    \end{enumerate}

    We will always assume that we have a degree function
    \[
    |\cdot| : \mathrm{Ob}\,\cF \to \bZ
    \]
    such that $\cF_{xy}$ has dimension $|x|-|y|-1$.

 Let $M^n$ be a manifold with corners, and $p$ a point in $M$. Then there is a chart near $p$ $\phi: U \subseteq M \rightarrow V \subseteq \bR^{n-i} \times \bR_{\geq 0}^{i}$ sending $p$ to $0$, where $i = \Gamma(p)$ is the codimension of $p$ in $M$.

 \begin{defn}
     Let $M$ be a manifold with corners. Then $M$ is a manifold with faces if and only if for each point $p$ in $M$, $p$ lies in the closure of exactly $\Gamma(p)$ boundary faces.
 \end{defn}

 We write $D(M)$ for the set of codimension 1 connected faces of $M$, and $D({F \subseteq M})$ for the set of codimension 1 (connected) faces of $M$ which touch $F$.  
 
 A \emph{system of collars} $\cC$ on $M$ consists of collar neighbourhoods
            $$\cC=\cC^{FG}: F \times [0, \varepsilon)^{D({F \subseteq G})} \hookrightarrow G$$
            meaning an embedding onto a neighbourhood of $F \subseteq G$, whenever $F$ and $G$ are faces of $M$ such that $F \subseteq G$. We require that the collars restrict to each other on subfaces, and come with maps 
            $$\cC: F \times [0, \varepsilon) \hookrightarrow M$$ 
            for all boundary faces $F$ such that for all distinct pairs of boundary faces $F_i, F_j$, the following diagram commutes,
            \begin{equation}\label{star}
            \xymatrix{
                F_i \cap F_j \times [0, \varepsilon)^2 \ar[rrr]^{\cC^{F_i} \times \id_{[0, \varepsilon)}} \ar[d]_{\left(\cC^{F_j} \times \id_{[0, \varepsilon)}\right) \circ \tau}  &&& F_i \times [0, \varepsilon) \ar[d]^{\cC} \\
                F_j \times[0, \varepsilon) \ar[rrr]_{\cC} &&& M
            }
            \end{equation}
            where $\tau$ swaps the two factors in $[0, \varepsilon)^2$.

            A \emph{system of collars} on a flow category $\cF$ consists of a system of collars $\cC_{xy}$ on all $\cF_{xy}$, restricting to the product system of collars on $\cF_{xz} \times \cF_{zy}$ induced from $\cC_{xz}$ and $\cC_{zy}$.  Systems of collars always exist, are unique up to isotopy, and can be chosen to extend (compatibly) given collars on subsets of boundary faces.

The abstract index bundle $I^{\cF}_{xy} \to \cF_{xy}$ is $T\cF_{xy} \oplus \bR$, with the final factor generated by an element $\tau_y$.  

We now fix `stabilisation data' for the flow-category $\cF$, comprising non-negative integers $\nu_{xx'}$, for $x, x' \in \cF$. Each such defines a finite-dimensional inner product space $\bR^{\nu_{xx'}}$. We insist there are injections
\begin{equation} \label{eqn:inject stabilisation data}
[\nu_{xx'}] \sqcup [\nu_{x'x''}] \to [\nu_{xx''}]
\end{equation}
defining  isometric embeddings 
\[
\bR^{\nu_{xx'}} \oplus \bR^{\nu_{x'x''}} \to \bR^{\nu_{xx''}}
\]
and we insist that the injections \eqref{eqn:inject stabilisation data} are associative for quadruples, meaning in particular that 
\begin{equation*}
                \xymatrix{
                    \bR^{\nu_{xx'}} \oplus \bR^{\nu_{x'x''}} \oplus \bR^{\nu_{x''x'''}}
                    \ar[r]
                    \ar[d]
                    &
                    \bR^{\nu_{xx'}} \oplus \bR^{\nu_{x'x'''}}
                    \ar[d]
                    \\
                    \bR^{\nu_{xx''}} \oplus \bR^{\nu_{x''x'''}}
                    \ar[r]
                    &
                    \bR^{\nu_{xx'''}}
                }
            \end{equation*} 
commutes. 

\begin{defn}\label{defn:shoehorn in an I-space}
    The Thom $\cI$-space $(\cF_{xx'}, I^{\cF}_{xx'})$ has
\[
(\cF_{xx'}, I^{\cF}_{xx'})(n) := I^{\cF}_{xx'}\oplus\bR^n \to \cF_{xx'}
\]
\end{defn} 

\begin{rmk}
To simplify notation, we will often write $(\cF_{xx'},I^{\cF})$ in place of $(\cF_{xx'}, I^{\cF}_{xx'})$, or write $I^{\cF}(\cdot)$ to indicate that we are considering  the Thom $\cI$-space rather than the index bundle.\end{rmk}

In the setting of Floer theory, and a tangential pair $\Psi$ with associated Thom $\cI$-monoid $E_{\Psi} = (\bU^{\Psi}, \bV^{\Psi})$, the vector bundle $\bV^{\Psi}(\nu) \to \bU^{\Psi}(\nu)$ has rank $\nu$ given by the dimension of the stabilisation vector space.  On the other hand, the vector bundle arising in Definition \ref{defn:shoehorn in an I-space} has rank determined by both the $\cI$-index $\nu$ and the rank $|x|-|x'|$ of the index bundle $I^{\cF}_{xx'}$.

This leads us to introduce a variation of the notion of `stabilisation data'.

\begin{defn}\label{def: ind data}
    Index data for a flow category $\cF$ comprises choices of finite sets $d_{xx'}$ of cardinality $|x|-|x'|$, for each $x, x' \in \cF$, such that there are (not necessarily order-preserving) injections
    \[
    d_{xx'} \sqcup d_{x'x''} \to d_{xx''}
    \]
    which satisfy that for objects $x,x',x''$ the diagram
    \[
    \xymatrix{
    d_{xx'} \sqcup d_{x'x''} \sqcup d_{x''x'''} \ar[r] \ar[d] & d_{xx'} \sqcup d_{x'x'''} \ar[d] \\
    d_{xx''} \sqcup d_{x''x'''} \ar[r] & d_{xx'''}
    }
    \]
    commutes.
\end{defn}

\begin{rmk}\label{rmk:change order}
    There are analogues of stabilisation and index data for morphisms of flow categories, bordisms of morphisms, bilinear maps, associators etc.  When considering index data for a bilinear map, one encounters a diagram of the form
    \[
    \xymatrix{
    d_{xx'} \sqcup d_{yy'} \sqcup d_{x'y';z} \ar[r] \ar[d] & d_{xx'} \sqcup d_{x'y;z} \ar[d] \\
    d_{yy'} \sqcup d_{xy';z} \ar[r] & d_{xy;z}
    }
    \]
    which must commute. At this point it is essential that the various $d_{xy;z}, d_{xx'}$ etc are finite sets which are not canonically ordered, and the morphisms of finite sets in the definition of index data are injections which need not be order-preserving (as for morphisms in sets of $\cI$). Geometrically this reflects the fact that the two breakings at the inputs of a bilinear map have no preferred order.
\end{rmk}

\begin{rmk} There is no natural $E_\infty$ or indeed $E_2$ map $\bZ \to BO\times\bZ$ (e.g. one splitting projection), but there is a natural map $\Omega^{\infty}(\bS) \to BO \times \bZ$, which gives a homotopical explanation for the appearance of (unordered) finite sets in the construction of index data.
\end{rmk}

The following is perhaps not obvious in light of Remark \ref{rmk:change order}; a more detailed argument, in the Floer-theoretic settings where we require the result, follows from the same argument that proves Lemma \ref{lem: ex big S}.

\begin{lem}
    Given a finite collection of flow categories, morphisms, bilinear maps, associators, and bordisms of such, index data exists.
\end{lem}

\begin{proof}[Sketch]
    For a fixed flow category $\scrF$, this is proved inductively in the dimension of $\cF_{xx'}$. More generally one considers the finite set of all moduli spaces arising in constructing the required structure, and works inductively in their dimension. 
\end{proof}

\subsection{Orientation relative to an $\cI$-monoid}\label{sec: or flow cat I}

Fix a monoid $E$ in Thom $\cI$-spaces. Thus we have base spaces $\Base(E(n))$ and bundles $E(n) \to \Base(E(n))$, with suitably compatible addition maps. 

To define an $E$-orientation on a flow category $\cF$, we will use two auxiliary pieces of data:
\begin{enumerate}
    \item stabilisation data $\{\nu_{xx'}\}$ for $x,x' \in \cF$;
    \item \emph{index data} $\{d_{xx'}\}$ for $x,x' \in \cF$,
\end{enumerate}
as defined in the previous section. For a finite set $d$ we write $\bR^d$ for $\bR$-$\mathrm{Span}\langle d \rangle$.

\begin{defn}\label{defn:E-oriented flow cat}
    An $E$-oriented flow category  comprises a flow category $\cF$ with stabilisation data $\{\nu_{xx'}\}$ and index data $\{d_{xx'}\}$,  together with maps of Thom spaces
   \begin{equation} \label{eqn:structure morphism to E-orient flow category}
   \rho: (\cF_{xx'},I^\cF_{xx'})(\nu_{xx'}) \to E(\nu_{xx'}+d_{xx'})
   \end{equation}
    for any $x,x' \in \cF$, which are compatible with and associative under composition. Note that $\rho$ incorporates a map of base spaces $\rho_{xx'}: \cF_{xx'} \to \Base(E(\nu_{xx'}+ d_{xx'}))$ and a covering isomorphism of bundles $\st_{xx'}: I^{\cF}_{xx'} \oplus \bR^{\nu_{xx'}} \to E(\nu_{xx'} + d_{xx'})$.
\end{defn}

\begin{rmk}
    We do not formulate this in terms of maps of (i.e. functors of categories enriched in) Thom $\cI$-spaces, because a map $I^{\cF} \oplus \bR^n \to E(n)$ which arises as part of such a functor necessarily lands in the $\Sym_n$-fixed points of $\Base(E(n))$. This imposes an unwanted homotopical constraint.
\end{rmk}

To clarify the compatibility requirements in Definition \ref{defn:E-oriented flow cat}, we have that the $\rho_{xx'}$ fit into commuting diagrams
\begin{equation*}
                \xymatrix{
                    \cF_{xx'} \times \cF_{x'x''} 
                    \ar[rr]_-{\rho_{xx'} \times \rho_{x'x''}}
                    \ar[d]
                    &&
                    \Base E(\nu_{xx'} + d_{xx'}) \times \Base  E(\nu_{x'x''} + d_{x'x''})
                    \ar[d]
                    \\
                    \cF_{xx''}
                    \ar[rr]_{\rho_{xx''}}
                    &&
                    \Base E(\nu_{xx''}+d_{xx''})
                }
            \end{equation*}
(where the vertical map is the monoidal composition on $\Base(E)$).

The covering isomorphisms satisfy
\begin{equation*}
\xymatrix{
I^{\cF}_{xx'} + \bR^{\nu_{xx'}} + I^\cF_{x'x''} + \bR^{\nu_{x'x''}} \ar[r] \ar[d] & E(\nu_{xx'} + d_{xx'}) + E(\nu_{x'x''}+ d_{x'x''}) \ar[d] \\
I^{\cF}_{xx''} + \bR^{\nu_{xx''}} \ar[r] & E(\nu_{xx''} + d_{xx''})
}  
\end{equation*}

\begin{rmk}
    The commutation rules for associativity of these injections for quadruples of objects will involve moving $\nu$'s past one another and $d$'s past one another, but never interchanging those; the structure is compatible with a partial order on sets in $\cI$ in which $\nu \leq d$ for all $\nu,d$. This follows immediately from the fact that the Thom spaces on the RHS of the diagram arise as postcompositions with natural maps $E(\nu_{xx'}) \oplus \bR^{d_{xx'}} \to E(\nu_{xx'} + d_{xx'})$.
\end{rmk}

\begin{defn}\label{def: E or mor}
Let $\cF$ and $\cG$ be flow categories over $E$, and let $\cW$ be a morphism from $\cF$ to $\cG$. Assume that all of $\cF,\cG$ and $\cW$ have been equipped with stabilisation data and index data.

Then $\cW$ is $E$-oriented if for $x,x' \in \cF$ and $y,y' \in \cG$ we have 
maps of Thom spaces
   \begin{equation} \label{eqn:structure morphism to E-orient morphism}
   \rho: (\cW_{xy},I^{\cW}_{xy})(\nu_{xy}) \to E[1](\nu_{xy}+d_{xy})
   \end{equation}
   which fit into diagrams 
   \[
   \xymatrix{
   (\cF_{xx'},I^{\cF})(\nu_{xx'}) \times (\cW_{x'y},I^{\cW})(\nu_{x'y}) \ar[r] \ar[d] & E(\nu_{xx'}+d_{xx'}) \times E[1](\nu_{x'y} + d_{x'y}) \ar[d] \\
   (\cW_{xy},I^{\cW})(\nu_{xy}) \ar[r] & E(1+\nu_{xy} + d_{xy})
   }
   \]
   (and similarly for breaking in $\cG$ on the right).  These diagrams should furthermore be associative under further breaking.
\end{defn}

\begin{rmk}
    The shifts $[1]$ account for the fact that, when $E = E_\Psi$, if $x,x' \in \cF$ and $y\in \cG$ then
\[
I^{\cF}_{xx'} \oplus V_{xx'} = \rho_{xx'}^*\bV_{xx'}
\]
whilst
\[
I^{\cW}_{xy} \oplus V_{xy} = \bR \oplus \rho_{xy}^*\bV_{xy}.
\]
Here, in the notation from \cite[Lemma 3.18]{PS2},  $V_{xx'} = \bR^{\nu_{xx'}}$ is the stabilisation space, $\rho$ denotes the domain map to the space of abstract discs and $\bV_{xx'}, \bV_{xy}$ are the corresponding index bundles over the codomain of $\rho$.
\end{rmk}

A bordism $\cR$ between morphisms $\cW$ and $\cW'$, all equipped with index and stabilisation data, has maps of Thom spaces
\[
(\cR_{xy}, I^{\cR}_{xy})(\nu_{xy}) \to E[2](\nu_{xy}+d_{xy})
\]
which satisfy an analogous set of compatibilities under breaking. For instance, since $\cW_{xy} \sqcup \cW'_{xy}$ occur as codimension one unbroken faces in $\cR_{xy}$, there are isomorphisms of Thom spaces
\[
I^{\cR}_{xy}(\cdot)|_{\cW_{xy}} = I^{\cW}_{xy}(1+\cdot)
\]
and similarly for $\cW'$.
 An $E$-oriented bordism has maps (of bundles over the boundary stratum $\cW_{xy} \sqcup \cW'_{xy}$)
\[
\xymatrix{
(\cW_{xy},I^\cW_{xy})(1+\nu_{xy}) \sqcup (\cW'_{xy},I^{\cW'}_{xy})(\nu_{xy}) \ar[r] \ar[d]  & E[1](1+\nu_{xy} + d_{xy}) \ar[d] \\ (\cR_{xy},I^{\cR}_{xy})(\nu_{xy}) \ar[r] & E[2](\nu_{xy} + d_{xy})
}
\]
and similarly has maps
\[
\xymatrix{
(\cF_{xx'},I^{\cF}_{xx'})(\nu_{xx'}) \times (\cR_{x'y},I^{\cR}_{x'y})(\nu_{x'y}) \ar[r] \ar[d] & E(\nu_{xx'} + d_{xx'}) \times E[2](\nu_{x'y} + d_{x'y}) \ar[d] \\
(\cR_{xy},I^{\cR}_{xy})(\nu_{xy}) \ar[r] & E[2](\nu_{xy} + d_{xy})
}
\]
(and similarly for breaking in $\cG$ on the right).

\begin{lem}
    There is a category $\Flow^E$ of flow categories living over $E$, with morphisms being bordism classes of $E$-oriented morphisms.
\end{lem}

\begin{proof}
    Identical to \cite[Section 5.2]{PS2} subject to appropriate changes in notation.
\end{proof}

\begin{ex} \label{ex:concordance of Flow orientations}
    Consider two $E$-orientations $\iota, \tilde{\iota}$ on $\cF$, given by maps $\iota_{xx'}: \cF_{xx'} \to \Base(E)$ and $\tilde{\iota}_{xx'}: \cF_{xx'} \to \Base(E)$ covered by appropriate bundle data. We say these are \emph{concordant} if there are homotopies between $\iota_{xx'}$ and $\tilde{\iota}_{xx'}$ which are compatible with breaking, and lifts of the homotopies to $E$. The corresponding lifts of $\cF$ to objects of $\Flow^E$ are then equivalent.  (One can similarly define concordance of all the other data discussed above.)
\end{ex}

  \begin{defn}
      The \emph{shift} $\cF[i]$ of a flow category has the same objects and morphism spaces, but degree function $|x|_{\cF[i]} = |x|_{\cF}+i$.  

If $\cF$ is $E$-oriented, then we define an $E$-orientation on $\cF[i]$ to be given by the same maps $(\cF_{xx'}, I^\cF_{xx'})(\nu_{xx'}) \to E(\nu_{xx'}+d_{xx'})$. 

  \end{defn}

  As in \cite{PS,PS2} there are truncations $\tau_{\leq i}\cF$ of an $E$-oriented flow category, in which one prescribes all manifolds of dimension $\leq i$ and asserts the existence of manifolds of dimension $i+1$, satisfying the correct breaking. Equipping all such with $E$-orientations yields a category  $\tau_{\leq i}\Flow^E$, where $\Flow^E = \tau_{\leq \infty}\Flow^E$.  There are  truncation functors $\tau_{\leq i}\Flow^E \to \tau_{\leq j}\Flow^E$ whenever $j \leq i \leq \infty$, which are furthermore associative.

\subsection{Left and right modules}

An unoriented right module $\cN$ of degree $i$ over an unoriented flow category $\cF$ comprises moduli spaces $\cN_x$ for $x\in \cF$, of dimension $i - |x|$, with embeddings of boundary faces $\cN_{x'} \times \cF_{x'x} \hookrightarrow \cN_x$. 

Stabilisation and index data now comprises positive integers $\nu_{*x}$, and finite sets $d_{*x}$ of cardinality $i-|x|$, satisfying the obvious compatibilities.  An $E$-orientation of $\cN$ involves giving maps of Thom spaces
\[
I^\cN_{\ast x}(\nu_{*x}) \to E[1](\nu_{*x} + d_{*x})
\]
compatible under breaking.  Bordism of left modules is defined similarly, and there is a group $\Omega_*^E(\cF)$ of bordism classes of left modules. 

\begin{rmk}
    An $E$-oriented right module of degree $i$ is exactly a morphism $\ast[i] \to \cF$ of $E$-oriented flow categories.  The bordism group $\Omega_i^E(\cF)$ is the morphism group $[\ast[i],\cF]$ in the category $\Flow^E$.

    A degree $i$ left module over $\cF$ is a morphism $\cF \to \ast[-i]$, and the bordism group $\Omega_E^i(\cF)$ of $E$-oriented such  modules is the morphism group $[\cF,\ast[-i]]$ in $\Flow^E$.
\end{rmk}

Recall that a bilinear map $\cB: \cF \times \cG \to \cH$ of flow categories comprises manifolds with faces $\cB_{xy,z}$ for $x\in\cF,y\in\cG,z\in\cH$, of dimension $|x|+|y|-|z|$, with codimension one boundary faces covered by embeddings
\[
\cF_{xx'}\times\cB_{x'y,z}, \ \cG_{yy'} \times \cB_{xy',z}, \ \mathrm{and} \ \cB_{xy,z'}\times\cH_{z'z}.
\]
In the presence of stabilisation and index data, an $E$-orientation of $\cR$ comprises maps 
\[
(\cB_{xy;z},I^\cB_{xy;z})(\nu_{xy;z}) \to E[1](\nu_{xy;z} + d_{xy;z})
\]
which are compatible with breaking and associative for compatible breaking (cf. Remark \ref{rmk:change order} in this case). 

If $\cB: \cF \times \cG \to \cH$ is an $E$-oriented bilinear map of flow categories there is an induced map
\begin{equation} \label{eqn:composition from bilinear map}
\cB_*: \Omega_i^E(\cF) \times \Omega_j^E(\cG) \to \Omega_{i+j}^E(\cH).
\end{equation}

\begin{rmk}\label{rmk:bilinear map bijection}
    For $x\in\cF, y\in \cG, z \in \cH$ the association $\cB'_{yx;z} := \cB_{xy;z}$ shows that bilinear maps $\cB: \cF \times \cG \to \cH$ naturally biject with bilinear maps $\cB': \cG \times \cF \to \cH$, compatibly with $E$-orientations.
\end{rmk}

\subsection{Bordism groups\label{sec:bordism coefficients for obstruction theory}}

Let $E$ be a commutative Thom $\cI$-monoid.  Then there is a Thom commutative ring spectrum $\Thom(E)$ (cf. Remark \ref{rmk: comm thom ring spec}) spectrum which has an associated bordism theory, $\Omega^{E}_*$. Concretely, an element of $\Omega_i^E$ is realised by a closed $i$-dimensional manifold $M$ and a map of Thom spaces $(M,TM)(k) \to E(n_{i,k})$, where $M \to \Base(E)$ lands in a component where the rank of the bundle $E(n_{i,k})$ is $i+k$. One can always further stabilise to increase $k$ and accordingly $n_{i,k}$. Two such elements $(M,TM)(k) \to E(n_{i,k})$ and $(N,TN)(k') \to E(n_{i,k'})$ are bordant if there is some $\kappa$ with $k \sqcup k' \hookrightarrow \kappa$ and a commutative diagram

\[
\xymatrix{
(M,TM)(\kappa) \ar[r]\ar[d] & E(n_{i,\kappa}) \ar[d] \\
(W,TW)(\kappa) \ar[r] & E[1](n_{i,\kappa}) \\
(N,TN)(\kappa) \ar[r] \ar[u] & E(n_{i,\kappa}) \ar[u]
}
\]
where $W$ is a bordism between $M$ and $N$ and the diagram represents the required $E$-orientation data. We write $\Omega_*^E$ for the resulting groups.

The definition adapts directly to the case of maps $M \to X$ to a target $X$ which need not be a point. More generally, assume $X$ is equipped with a vector bundle $V \to X$ of rank $r$. We may define $\Omega^E_i(X, V)$ to be the group of closed $i$-manifolds $M$ equipped with a map of Thom spaces $(M, TM)(k + r) \to (X, V)\otimes E(n_{i,k})$, considered up to stabilisation and bordism as above.

Let $\cF$ be a(n unoriented) flow category. A left module $\cN$ \emph{lives over $X$} if there are maps $\cN_{x*} \to X$ which are compatible with the module action of $\cF$.

Given a commutative Thom $\cI$-monoid $E$ and an $E$-orientation on $\cF$, a left module $\cN$ over $\cF$ living over $X$ is \emph{$E$-oriented relative to $V$} if it is equipped with maps of Thom spaces $(\cN_{*x}, I^\cN_{*x})(\nu_{x*}+r) \to (X, V) \otimes  E[1](\nu_{x*}+d_{x*})$, which are compatible with breaking. The underlying maps of spaces $\cN_{*x} \to X$ are required to agree with the given ones.

We may define bordisms of modules, as well as bilinear maps $\cF \times \cG \to *[i]$, over $X$, and $E$-orientations relative to $V$, similarly.

Our eventual theory of interest, $\Omega^{E_\Psi, \cO\cC}(X, \phi)$, is a variant on this which is only defined for commutative Thom $\cI$-monoids arising from tangential pairs, cf. Section \ref{sec: tang fuk}.

\subsection{Unitality\label{Sec:unitality}}

We discuss units, following \cite[Section 6]{AB2}. Their key idea is to introduce a novel stratification of $B\times [0,1]$ for a manifold with corners $B$.

\begin{rmk} \label{rmk:order corner strata}
   We now assume that the set of boundary strata of a manifold with corners admits a partial order which is a total order on the facets (i.e. codimension one faces)  adjacent to any corner. This will hold for all the manifolds with corners arising as morphism spaces in Floer (or Morse) flow categories, cf. Theorems \ref{thmdef:unor fuk} and \ref{thm: unor OC mod} for a comprehensive list. 
\end{rmk}

    For a $k$-dimensional manifold with corners $X$, with the further data from Remark \ref{rmk:order corner strata}, \cite[Lemma 6.3 \& Definition 6.4]{AB2} construct a $(k+1)$-dimensional manifold with corners $\bD(X)$, the `conic degeneration of $X$'. If in a local chart $X$ is modelled on $[0,\infty)^k$, then $\bD X$ maps to $[0,\infty)^k$ with fibres a union of $i+1$-intervals over the locus where $i$ co-ordinates vanish. 
    
    \begin{lem}
        For a flow category $\cF$, there is a morphism $\bD\cF: \cF \to \cF$ with spaces
    \[
    (\bD\cF)_{xy} = \begin{cases} \bD(\cF_{xy}) & x\neq y \\ \{pt\} &  x=y\end{cases}
    \]
    \end{lem}

    \begin{proof}
        The boundary facets of the conic degeneration are enumerated as
    \[
    \partial \bD\cF_{xy} = \bD\cF_{xx'} \times \cF_{x'y} \cup \cF_{xy'} \times \bD\cF_{y'y}.
    \]
    See \cite[Lemma 6.7]{AB2}.
    \end{proof}

    \begin{lem}\label{lem: flow unit unor}
        $\bD\cF$ defines the unit for $\cF$ in a unital structure on the category $\Flow$. 
    \end{lem}

\begin{proof}
    It follows from \cite[Lemma 6.10]{AB2} (and the fact that the diagonal bimodule gives a system of degeneracies for their semisimplicial category) that for any morphism $\cW: \cF \to \cG$, there is a bordism $\cW \simeq (\cW \circ \bD \cF)$. 
\end{proof}

Write $p: \cD_n \to [0,\infty)^n$ for the local conic degeneration of the basic manifold with corners.  \cite[Lemma 6.11]{AB2} construct compatible isomorphisms
\[
T\cD_{n+1} \oplus \underline{\bR} \to T[0,\infty)^n \oplus \underline{\bR} \oplus \bR
\]
with the properties  that (i) on the interior, the first factor of the isomorphism is that induced by the projection $p$ up to a positive diagonal rescaling, and (ii) the map $\underline{\bR} \to \underline{\bR} \oplus \bR$ sends the generator $1$ to a point in the positive quadrant. Such isomorphisms are compatible with the inductive construction of the conic degenerations, so for any manifold with corners $B$ one gets analogous isomorphisms in the setting of the map  $p: \bD B \to B$ assembled from the local conic degenerations over strata.

\begin{lem}\label{lem:unit morphism is oriented}
    If $\cF$ is an $E$-oriented flow category, then the unit morphism $\bD \cF$ is naturally an $E$-oriented morphism.
\end{lem}

    \begin{proof}
    Suppose first $x\neq y$ are distinct objects of $\cF$.  The maps $\cF_{xy} \to E$ induce corresponding maps on $(\bD\cF)_{xy}$. 
The $E$-orientation on $\cF$ gives isomorphisms
\[
I_{xy}^{\cF}(\nu_{xy}) \to E(\nu_{xy} + d_{xy})
\]
for stabilisation spaces $\bR^{\nu_{xy}}$ and index data $d_{xy}$, 
compatible with breaking. The previous comments show that there are also coherent isomorphisms
\[
T \bD\cF_{xy} \oplus \bR_y \to p^*T\cF_{xy} \oplus \bR_y \oplus \bR
\]
  Combining these gives rise to isomorphisms
  \[
  I_{xy}^{\bD\cF}(\nu_{xy}) \to E[1](\nu_{xy} + d_{xy})
  \]
where the map from $\bD\cF$ to $\Base(E)$ is induced from the corresponding classifying map for $\cF$ and the forgetful map $p$.  
  
     Finally, for any $x$ we map $(\bD\cF)_{xx}$ to the basepoint in the (base of the) monoid $E$.      
     \end{proof}

\begin{lem}
    The homotopy category $\Flow^{E}$ is unital.
\end{lem}

\begin{proof}
    Combine the arguments in the proofs of Lemmas \ref{lem: flow unit unor} and \ref{lem:unit morphism is oriented}.
\end{proof}

      In \cite{PS2} we constructed a category $\Flow^{\Psi}$ of $\Psi$-oriented flow categories, for any graded tangential structure
    \begin{equation} \label{eqn:tangential infinite loop}
    \xymatrix{
    \Theta \ar[r]\ar[d] & \Phi \ar[d] \\
    BO \ar[r] & BS_{\pm}U
    }
    \end{equation}
 
\begin{conj} \label{conj:compare models of Flow}
    Suppose $\Psi$ is commutative and \eqref{eqn:tangential infinite loop} is a diagram of infinite loop spaces, so there is a commutative monoid $E_\Psi$ in Thom $\cI$-spaces. Then there is a unital functor $\Flow^{\Psi} \to \Flow^{E_\Psi}$.
\end{conj}

\begin{rmk}
    This is not immediate from the constructions, essentially because concatenation in $\Flow^{\Psi}$ relies on the product on $\Omega U/O$ coming from concatenation of loops, whereas concatenation in $\Flow^{E_\Psi}$ uses the infinite loop space addition on $U/O$.  It seems likely that one could adapt the construction of `twisting data' and arguments from Section \ref{sec: proo ind comm} below to prove Conjecture \ref{conj:compare models of Flow}.

    In particular, the functor  of Conjecture \ref{conj:compare models of Flow} should not be an equivalence. Let $R$ be the commutative ring spectrum $\Thom(E_\Psi)$. When $\Phi$ is contractible, we expect $\Flow^\Psi$ to be equivalent to the category of $R$-$R$ bimodules \cite[Conjecture 5.10]{PS2}, and $\Flow^{E_\Psi}$ to be equivalent to the category of $R$-modules.
\end{rmk}

\subsection{Cones}

    \begin{defn}
 Let $\cW: \cA \rightarrow \cB$ be a morphism of unoriented flow categories. There is an unoriented flow category $\Cone(\cW)$ with objects $\cA[1] \sqcup \cB$, with morphisms given by
            $$\Cone(\cW)_{xy} := \begin{cases}
                \cA_{xy} & \,\mathrm{if}\,x,y\in\cA\\
                \cB_{xy} & \,\mathrm{if}\,x,y\in\cB\\
                \cW_{xy} & \,\mathrm{if}\,x\in\cA,\,y\in\cB\\
                \emptyset & \,\mathrm{if}\,x\in\cB,\,y\in\cA
            \end{cases}$$
            Then $\Cone(\cW)$ contains $\cA[1]$ and $\cB$ as full subcategories. 
    \end{defn}
    If $\cA,\cB$ and $\cW$ are $E$-oriented then the shift $\cA[1]$ and the mapping cone $\Cone(\cW)$ each inherit an $E$-orientation, defined using the given stabilisation and index data and the associated maps (schematically denoted) $(\cA,I^\cA) \to E$, $(\cB,I^{\cB}) \to E$ and $(\cW,I^{\cW}) \to E[1]$.

    \begin{prop}\label{prop: cone prop}
        Let $\cW, \cW': \cA \to \cB$ be $E$-oriented morphisms of $E$-oriented flow categories. 
        \begin{enumerate}
            \item If $\cW$ and $\cW'$ are bordant (via a $E$-oriented bordism $\cR$), then $\Cone(\cW) \cong \Cone(\cW') \in \Flow^{E}$.
            \item If $\cV': \cA' \to \cA$ and $\cV'': \cB \to \cB'$ are isomorphisms, then $\Cone(\cW) \cong \Cone(\cV'' \circ \cW \circ \cV')$.
        \end{enumerate}
    \end{prop}
    \begin{proof}
        For the first part, we first write down a morphism $\cQ$ from $
    \Cone(\cW)$ to $\Cone(\cW')$. Precisely, we use letters $a,b$ for elements of $\Cone(\cW) = \cA[1] \sqcup \cB$ and letters $\alpha,\beta$ for elements of $\Cone(\cW') = \cA[1] \sqcup \cB$. Then
    \begin{align*}
    \cQ_{a\alpha} & =  \bD\cA_{a\alpha}\\
    \cQ_{a\beta} & =  \cR_{a\beta} \\
    \cQ_{b\beta} & =  \bD \cB_{b\beta} \\
    \cQ_{b\alpha} & = \emptyset
    \end{align*}
    Note that the boundary strata $\cW_{a\beta}$ and $\cW'_{a\beta}$ in $\partial \cR_{a\beta}$ are then accounted for in the boundary of $\cQ$ by the factors arising from the point strata $(\bD\cA)_{aa}$ and $(\bD\cB)_{\beta\beta}$. The $E$-orientation on $\cQ$ is inherited from those on $\cA,\cB,\cR$ using that the diagonal bimodules are canonically then $E$-oriented.

    $\cQ$ also defines a morphism $\tilde{\cQ}$ from $\Cone(\cW')$ to $\Cone(\cW)$. The composite $\tilde{\cQ} \circ \cQ$ is an endomorphism of $\Cone(\cW)$. The underlying spaces of this are given by appropriate partial gluings 
    \[
    (\tilde{\cQ} \circ \cQ)_{aa'} = \cup_{\alpha} \, \bD\cA_{a\alpha} \times \bD\cA_{\alpha a} 
    \]
which is bordant to $\bD\cA_{aa'}$ by the idempotent property of the unit morphism for $\cA$. The morphism spaces for pairs $(b,b') \in \Ob\Cone(\cW)\times \Ob\Cone(\cW)$ are described similarly. Finally, for a pair $(a,b)$, one obtains a quotient of 
\[
    (\tilde{\cQ} \circ \cQ)_{ab} = \cup_{a'} \, \bD\cA_{aa'} \times \cR_{a'b} \cup \cup_{b'} \cR_{ab'} \times \bD\cB_{b'b} 
    \]
    Taking $a=a'$ and $b'=b$ (when the diagonal bimodule morphism spaces are just a point) shows that there are two boundary faces equal to $\cW_{ab}$ (whilst the two corresponding $\cW'_{ab}$ faces have been glued together). The whole union is then bordant to $\bD\cW_{ab}$, which shows that the composite map is bordant to the unit, and so $\cQ$ itself was an equivalence.  The $E$-orientations come along for the ride.  
    
The second part is similar.
    \end{proof}

  \subsection{Morse complexes}

Let $\det(V)$ denote the `usual' determinant line of a finite-dimensional vector space or vector bundle $V$. 

\begin{defn}\label{defn:Thom is oriented}
    We say that a monoid in Thom $\cI$-spaces $E$ is \emph{oriented} if we are given trivialisations of $\det(E(n))$ which are compatible with both the maps $E(n) \oplus \bR^{m-n} \to E(m)$ and also with the  maps $E(n) \times E(m) \to E(n+m)$ underlying the monoidal structure  $E\times E \to E$. 
\end{defn}

If $E$ is oriented, and $Z$ is an $E$-oriented zero-manifold, then we write $[Z] \in \bZ$ for the associated signed count.

    Let $\cF$ be a $E$-oriented flow category and $M$ a $\bZ$-module.  We define the \emph{Morse chain complex} $CM_*(\cF;M)$ to be the chain complex defined as follows. As a graded abelian group,

        \begin{equation}
            CM_*(\cF) := \bigoplus\limits_{x \in \cF} M \cdot x
        \end{equation}
        where for each $x \in \cF$, we treat $x$ as a formal generator in degree $|x|$.

        The differential is defined by:
        \begin{equation}
            \partial x := \sum\limits_{|x|-|y|=1} \sum\limits_{p \in \cF_{xy}} [p] \cdot y
        \end{equation}
        where $[p] \in \{\pm 1\}$ denotes the $E$-orientation of $p$; this is then  extended to be $\bZ$-linear. This defines a chain complex, whose homology is written $HM_*(\cF;M)$.

As in \cite{PS2}, we have that $HM_*(\cF;M)$ is functorial under $E$-oriented morphisms. An $E$-oriented bilinear map induces a degree zero chain-map, which descends to homology. If the relevant four bilinear maps admit an $E$-oriented associator, the product is associative on homology. 

Write $[\cF,\cG]^{E,\leq i}$ for morphisms in the category of truncated $E$-oriented flow categories $\tau_{\leq i}\Flow^E$.

\begin{lem} \label{lem:truncation to morse complexes}
If $E$ is oriented, $\pi_0\Thom(E) \cong \bZ$, and $M$ is a $\bZ$-module,
    there is an isomorphism 
    \[
    [\cF,\cG]^{E,\leq 0} \otimes_{\bZ} M \longrightarrow \Hom(CM_*(\cF;M), CM_*(\cG;M)).
    \]
    For any $E$ with $\pi_0\Thom(E) \cong \bZ/2$, and any $\bZ/2$-module $M$, 
    there is an isomorphism 
    \[
    [\cF,\cG]^{E,\leq 0} \otimes_{\bZ/2} M \longrightarrow \Hom(CM_*(\cF;M), CM_*(\cG;M)).
    \]
\end{lem}

\begin{proof}
    The proofs are essentially identical to one another and to those given in \cite[Lemma 5.41]{PS} and \cite[Lemma 5.25]{PS2}.
\end{proof}

\subsection{Obstruction theory }\label{sec: ob thry}

The following is a version of \cite[Theorem 5.48]{PS}, incorporating $E$-orientations, and crucially allowing the case in which neither domain nor target flow category is a point. The statement and proof make reference to the bordism theory introduced in Section \ref{sec:bordism coefficients for obstruction theory}.

    \begin{prop}\label{prop: lift ob}
        Let $\cA$ and $\cB$ be $E$-oriented $\tau_{\leq i+1}$-flow categories, and $\cW: \cA \to \cB$ an $E$-oriented $\tau_{\leq i}$-morphism. Then there is a homology class

        \begin{equation}\label{eq: lift ob}
            [\cO^E] \in \operatorname{Hom}\left(CM_{*+i+2}(\cA), CM_*(\cB; \Omega^{E}_{i+1})\right)
        \end{equation}
        such that if $[\cO^E]$ vanishes, $\cW$ is the truncation of an $E$-oriented $\tau_{\leq i+1}$-morphism $\cW'$.
    
    \end{prop}

    \begin{proof}
        This is an adaptation of the proof of \cite[Proposition 5.38]{PS2}. Let $\cW': \cA \to \cB$ be an extension of $\cW$ to a $E$-oriented pre-$\tau_{\leq i+1}$-morphism. The obstruction to $\cW$ being a $\tau_{\leq i+1}$-morphism is constructed as follows. For $x \in \cA, y \in \cB$ with $|x|-|y|=i+2$, we set
        \begin{equation}
            \cY_{xy} = \bigcup_{x' \in \cA} \cA_{xx'} \times \cW'_{x'y} \cup \bigcup_{y' \in \cB} \cW_{xy'} \times \cB_{y'y}
        \end{equation}
        where we glue along the common faces of the form $\cA_{xx'} \times \cW_{x'y'} \times \cB_{y'y}$. As written, this is not automatically a smooth manifold; however, it can be smoothed to a closed smooth $(i+1)$-manifold. We have index and stabilisation data for $\cA, \cB$ and $\cW'$, and their $E$-orientations are compatible with breaking and hence with the identifications of faces that enter into defining $\cY$. Equipping $\cY$ with the product index and stabilisation data therefore equips it with an $E$-orientation, compare to \cite[Section 5.7]{PS2} or \cite[Section 5.5]{PS}. The resulting manifold is well-defined up to $E$-oriented cobordism, and in particular gives a well-defined class in $\Omega^{E}_{i+1}$. These bordism classes together form the coefficients for a linear map of chain complexes
        \begin{equation}
            \cY: CM_{*+i+2}(\cA) \to CM_*(\cB; \Omega^{E}_*)
        \end{equation}
        If this vanishes, we may choose a filling of each $\cY_{xy}$, and use this to construct the required extension of $\cW'$ (cf. \cite[Lemma 5.30]{PS2}). 
        
        Given an exact map of chain complexes $F: CM_{*+i+2}(\cA) \to CM_*(\cB; \Omega^{E}_*)$, we use the coefficients to modify the extension $\cW'$; the resulting obstruction map $\cR'$ is exactly $\cR+F$. Therefore if $\cY$ is exact for some choice of extension $\cW'$, we may choose a different extension $\cW'$ so that $\cY$ vanishes (cf. \cite[Lemma 5.32]{PS2}).

        It remains to argue that $\cY$ is a chain map, and so represents a homology class $[\cO^E]$. This is equivalent to showing that the manifold
        \begin{equation}
            \bigsqcup\limits_{|x|-|x'|=1} \cA_{xx'} \times \cW_{x'y} \sqcup \bigsqcup\limits_{|y'|-|y|=1} \cW_{xy'} \times \cB_{y'y}
        \end{equation}
        represents the zero class in $\Omega^{E}_{i+1}$ for all $x \in \cA$, $y \in \cB$ with $|x|-|y|=i+3$. We construct a nullbordism as follows. For such $x,y$, define
        \begin{equation}
            \cX_{xy} = \bigcup\limits_{|x|-|x'|>1} \cA_{xx'} \times \cW'_{x'y} \cup \bigcup\limits_{|y'|-|y|>1} \cW_{xy'} \times \cB_{y'y}
        \end{equation}
        where we glue along common faces of the form $\cA_{xx'} \times \cA_{x'x''} \times \cW'_{x''y}$, $\cW_{xy''} \times \cB_{y''y'} \times \cB_{y'y}$ and $\cA_{xx'} \times \cW'_{x'y'} \times \cB_{y'y}$, for $|x|-|x'|>1$ and $|y'|-|y|>1$. We again smooth this and equip it with a classifying map for its abstract index  bundle $I^\cX_{xy} \to E$ as before, using the obvious inherited index and stabilisation data. This now provides the required nullbordism.
    \end{proof}
    Such extensions always exist if one is allowed to modify the target flow category:
    \begin{prop}\label{prop: truncation modification}
        Let $\cA, \cB$ be $E$-oriented $\tau_{\leq i+1}$-flow categories, and $\cW: \cA \to \cB$ a $E$-oriented $\tau_{\leq i}$-morphism.

        Then there is a $E$-oriented $\tau_{\leq i+1}$-flow category $\cC$, and a $E$-oriented $\tau_{\leq i+1}$-equivalence $\cV: \cA \to \cC$, and such that $\tau_{\leq i} \cC = \tau_{\leq i} \cB$ and $\tau_{\leq i} \cV = \cW$.
    \end{prop}
    By induction on $i$, this implies:
    \begin{cor}\label{cor:replace 0 truncation}
        Let $\cA, \cB$ be $E$-oriented flow categories, and $\cW: \cA \to \cB$ an $E$-oriented $\tau_{\leq 0}$-morphism.

        Then there is a $E$-oriented flow category $\cC$, and a $E$-oriented equivalence $\cV: \cA \to \cC$, such that $\tau_{\leq 0}\cC = \tau_{\leq 0}\cB$ and $\tau_{\leq 0}\cV = \cW$.
    \end{cor}

    \begin{proof}[Proof of Proposition \ref{prop: truncation modification}]
        $\cC$ is defined to have objects the same as $\cB$ (with the same gradings), and
        \begin{equation}
            \cC_{yy'} = \begin{cases}
                \cB_{yy'} 
                &
                \textrm{ if } |y|-|y'|-1 \leq i
                \\
                \cB_{yy'} \sqcup P_{yy'}
                &
                \textrm{ if } |y|-|y'|-1 = i+1
            \end{cases}
        \end{equation}
        where for each $y, y' \in \cC$ with $|y|-|y'|-1 = i+1$, $P_{yy'}$ is a choice of closed $(i+1)$-manifold with tangent bundle stably classified by a map $P_{yy'} \to \Base(E)$ (we will choose the $P_{yy'}$ in due course). $\cC$ is a $E$-oriented pre-$\tau_{\leq i+1}$ flow category, where the maps to $E$ are inherited from those given on $\cB$ and the $P_{yy'}$. We write this as $\cC=\cC(P)$ to indicate the choice involved; in particular, $\cB = \cC(\emptyset)$.

        By taking a matrix whose coefficients are given by the bordism class of each $P_{yy'}$, we obtain a linear map of chain complexes
        \begin{equation}
            P: CM_{*+i+2}(\cB) \to CM_*(\cB; \Omega_{i+1}^{E})
        \end{equation}

        \begin{claim}
            $\cC$ is an $E$-oriented $\tau_{\leq i+1}$-flow category if and only if $P$ is a map of chain complexes.
        \end{claim}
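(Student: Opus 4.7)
The plan is to characterise the $\tau_{\leq i+1}$-flow-category property of $\cC = \cC(P)$ as a bordism-theoretic obstruction, and then identify that obstruction explicitly with the chain-map condition for $P$.

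First I would observe that $\cC$ is a pre-$\tau_{\leq i+1}$-flow category by construction (nothing has changed from $\cB$ in degrees $\leq i$, and in degree $i+1$ we have simply adjoined closed $E$-oriented $(i+1)$-manifolds $P_{yy'}$). The additional requirement to be an honest $\tau_{\leq i+1}$-flow category is that for each pair $y, y'$ with $|y|-|y'|-1 = i+2$ the prescribed codimension-one breaking
\[
\partial^{\mathrm{pre}}\cC_{yy'} \ := \ \bigsqcup_{y''}\, \cC_{yy''} \times \cC_{y''y'}
\]
is nullbordant as a closed $E$-oriented $(i+1)$-manifold, i.e.\ represents $0$ in $\Omega^E_{i+1}$. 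This is the same style of obstruction as extracted in the proof of Proposition \ref{prop: lift ob}, applied now to the flow category $\cC$ itself rather than to a morphism between flow categories.

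Next I would expand $\partial^{\mathrm{pre}}\cC_{yy'}$ using the definition of $\cC$. The summand $P_{yy''}$ appears in $\cC_{yy''}$ precisely when $|y|-|y''|-1 = i+1$, and similarly for $P_{y''y'}$, so
\[
\partial^{\mathrm{pre}}\cC_{yy'} \ = \ \bigsqcup_{y''}\, \cB_{yy''}\times\cB_{y''y'} \ \sqcup\  \bigsqcup_{|y''|-|y'|=1}\! P_{yy''}\times \cB_{y''y'} \ \sqcup\  \bigsqcup_{|y|-|y''|=1}\! \cB_{yy''}\times P_{y''y'}.
\]
The first union is the codimension-one breaking of the $(i+2)$-manifold $\cB_{yy'}$ (which exists and is $E$-oriented since $\cB$ is a $\tau_{\leq i+1}$-flow category), and hence bounds. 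Modulo this, the $E$-oriented bordism class of $\partial^{\mathrm{pre}}\cC_{yy'}$ in $\Omega^E_{i+1}$ equals
\[
\sum_{|y''|-|y'|=1} [P_{yy''}]\cdot [\cB_{y''y'}] \ \ +\ \ \sum_{|y|-|y''|=1} [\cB_{yy''}]\cdot [P_{y''y'}],
\]
where $[\cB_{\cdot\cdot}] \in \pi_0\Thom(E)$ are the matrix coefficients of the Morse differential on $CM_*(\cB)$ and $[P_{yy''}] \in \Omega^E_{i+1}$ are the matrix coefficients of $P$.

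Finally I would recognise the displayed sum as the $(y,y')$-matrix coefficient of $\partial\circ P \pm P\circ \partial$ on the complex $CM_*(\cB;\Omega^E_{i+1})$. Vanishing for all such pairs is therefore equivalent to $P$ being a chain map, and both implications in the claim follow: if $\cC$ is a $\tau_{\leq i+1}$-flow category then every $\partial^{\mathrm{pre}}\cC_{yy'}$ bounds, forcing $\partial P \pm P \partial = 0$; conversely, if $P$ is a chain map then each $\partial^{\mathrm{pre}}\cC_{yy'}$ represents $0 \in \Omega^E_{i+1}$, so one may choose compatible $E$-oriented fillings $\cC_{yy'}$, extending $\cC$ to an honest $\tau_{\leq i+1}$-flow category. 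The step requiring the most care is the sign bookkeeping: one must check that the $E$-orientations induced on the product strata $P_{yy''}\times\cB_{y''y'}$ and $\cB_{yy''}\times P_{y''y'}$ agree with the Koszul signs implicit in $\partial P \pm P\partial$; this follows from the associativity of the orientation data built into Definition \ref{defn:E-oriented flow cat}, together with the analogous matching of signs used in the proof of Proposition \ref{prop: lift ob}.
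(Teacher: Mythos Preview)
Your proof is correct and follows essentially the same approach as the paper's, which is extremely terse: the paper simply asserts that ``by construction, a nullbordism of (representatives of) all $(dP)_{xy}$ is exactly the data of an extension of the pre-$\tau_{\leq i+1}$-flow category $\cC$.'' You have unpacked this assertion explicitly, correctly identifying that the would-be boundary $\bigcup_{y''}\cC_{yy''}\times\cC_{y''y'}$ splits (since each $P$ is closed and is paired with a zero-manifold) as the already-bounding piece $\partial\cB_{yy'}$ disjoint union with the $P$-terms, whose bordism class computes the $(y,y')$-coefficient of $dP$.
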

        \begin{proof}[Proof of claim.]

            Let $(dP)_{xy} \in \Omega^{E}_{i+1}$ be the coefficients of the matrix representing the linear map
            \begin{equation}
                dP: CM_{*+i+3}(\cB) \to CM_*\left(\cB; \Omega_{i+1}^{E}\right)
            \end{equation}
            By construction, a nullbordism of (representatives of) all $(dP)_{xy}$ is exactly the data of an extension of the pre-$\tau_{\leq i+1}$-flow category $\cC$.
        \end{proof}

        $\cW$ defines a $E$-oriented $\tau_{\leq i}$-morphism $\cA \to \cC$. From Proposition \ref{prop: lift ob}, the obstruction to this admitting an extension to a $E$-oriented $\tau_{\leq i+1}$-morphism $\cV: \cA \to \cC$ is a class
        \begin{equation}
            [\cO^E(P)] \in \operatorname{Hom}\left(CM_{*+i+2}(\cA), \cM_{*}(\cB; \Omega^{E}_{i+1}) \right)
        \end{equation}
        Inspecting the construction of the class $[\cO^E(P)]$, we find that
        \begin{equation}
            [\cO^E(P)] = [\cO^E(0) + P \circ \cW_*]
        \end{equation}
        where $\cW_*: CM_*(\cA) \to CM_*(\cB)$ is the map induced by $\cW$. Since $\cW_*$ is a quasi-isomorphism, we may therefore choose $P$ so that the homology class of $[\cO^E(P)]$ vanishes, and so the $\Psi$-oriented $\tau_{\leq i+1}$-extension $\cV$ of $\cW$ exists for this $\cC=\cC(P)$.
    \end{proof}

\begin{rmk}\label{rmk:truncate iso}
   Just as Proposition \ref{prop: lift ob} adapts \cite[Theorem 4.48]{PS} to the case of $E$-orientations, one can analogously establish a version of \cite[Theorem 5.51]{PS} in our setting. As a consequence, if $\cW: \cF \to \cG$ is an $E$-oriented morphism of $E$-oriented flow categories and composition with $\tau_{\leq0}\cW$ is surjective on groups of right $\tau_{\leq0}$-modules, then composition with $\cW$ is in fact surjective as a map $[\ast[i],\cF]^{\Flow^E} = \Omega^E_i(\cF) \to \Omega^E_i(\cG) = [\ast[i],\cG]^{\Flow^E}$, for every $i$. 
\end{rmk}

\subsection{Flow categories over a target\label{sec:tangerine}}

Let $L$ be a finite cell complex. We write $\cP L$ for the \emph{Moore path space} of $L$, comprising pairs $(r,\alpha)$ where $r\in \bR_{\geq 0}$ and $\alpha: [0,r] \to L$.  This admits an obvious concatenation operation. We will call $r$ the \emph{Moore length} of a given path.

\begin{defn}
A \emph{flow category over $L$} comprises a flow category $\cF$, along with a functor of topologically enriched categories $\cF \to \cP L$.  
\end{defn}
\begin{rmk}\label{rmk:spid}
    Though Morse/Floer moduli spaces most naturally map to path spaces of $L$, it is sometimes convenient to work with based loop spaces (such as in Section \ref{sec: mon loco sys}). Given a flow category $\cF$ over $L$ and a basepoint in $\ell_0$, we may construct another flow category $\cF$ over $L$, with all $x \in \cF$ sent to $\ell_0$ (using an idea from \cite{Barraud-Cornea}).
    
    To do this, we choose a \emph{spider}, meaning a contractible set of arcs $Y \subseteq L$ containing all the points each $x \in \cF$ are sent to, and connecting them to $\ell_0$. The quotient map $L \to L/Y$ is a homotopy equivalence, so we may choose a homotopy inverse $\theta: L/Y \to L$, inducing a map $\theta: \Omega_{[\ell_0]} L/Y \to \Omega_{\ell_0}$. Then we take the compositions: $\cF_{xy} \to \cP_{xy}L \to \Omega_{[\ell_0]}L \to \Omega_{\ell_0}L$.
\end{rmk}

  Explicitly, this consists of a point in $L$ for each $x \in \cF$ (by abuse of notation we denote this by $x \in L$) and maps of spaces $\cF_{xy} \to \cP_{xy}L$, compatible with concatenation.  We can define morphisms and bordisms of flow categories over $L$ in the obvious way. To define composition of morphisms, note that the coequaliser diagram in \cite[Equation (33)]{PS2} is compatible with evaluation to $\cP L$, so the composition of morphisms over $L$ inherits the structure of a morphism over $L$.

    \begin{defn}
        An \emph{$E$-oriented flow category over $L$} consists of an $E$-oriented flow category $\cF$ which lives over $L$.
    \end{defn}
 Explicitly, this means that there are maps  $\cF_{xy} \to \Base(E)\times \cP_{xy} L$ covered by maps $(\cF_{xy},I^{\cF}_{xy}) \to E\times \cP L$ where $(E\times \cP L)(n) := E(n) \times \cP L$ and the vector bundle on the target is pulled back from $E(n)$.

    There is a category $\Flow^{E}_{/L}$ of $E$-oriented flow categories over $L$. Morphisms in this category are again given by bordism-over-$L$ classes of $E$-oriented morphisms over $L$.

The construction of the unit morphism $\bD\cF$ in Section \ref{Sec:unitality} applies unchanged when $\cF$ lives over $L$, and makes $\Flow^{E}_{/L}$ a unital category. Thus, $\cF$ and $\cF'$ are equivalent in $\Flow^{E}_{/L}$ exactly when there are $E$-oriented morphisms $\cW: \cF \to \cF'$ and $\cW': \cF' \to \cF$ over $L$ for which the compositions $\cW' \circ \cW$ and $\cW \circ \cW'$ are bordant to the corresponding identity morphisms.

    \begin{defn}
        To define $*[i]$ as an $E$-oriented flow cat over $L$, we choose a base-point $\ell_0 \in L$, and specify that the point in $L$ corresponding to the unique object of $*[i]$ is $\ell_0$.
    \end{defn}

\section{Floer theory and the Fukaya category without local systems}\label{sec: fuk no loc}
    We quickly recap some relevant features of the spectral (Donaldson-)Fukaya category constructed in \cite{PS}, beginning in the unoriented setting. We then incorporate tangential structures, importantly in a different way to \cite{PS2} and reflecting the fact that we now assume that our tangential structures are commutative. Let $X$ be a graded Liouville domain of (real) dimension $2d$. 

    \begin{rmk}
        For the same tangential pair $\Psi$, the construction of the spectral Fukaya category we work with here differs from that of \cite{PS2}: the `$\Psi$-oriented flow categories' of \textit{loc. cit.} are expected to correspond to $R_\Psi$-$R_\Psi$ bimodules (cf. \cite[Conjecture 5.10]{PS2}), whereas those in this paper should correspond to $R_\Psi$-modules. The primary reason for the different set-ups is that in \cite{PS2} we did not impose any commutativity assumptions on $\Psi$.

        Nevertheless we expect that the underlying Fukaya categories should be isomorphic to each other.
    \end{rmk}
\subsection{Unoriented version}\label{sec:unoriented category}
Let $\Psi_{unor} = (BO \to BS_\pm U)$ be the `universal' choice of commutative tangential pair. In this section we define a Fukaya category corresponding to this commutative tangential pair, before moving onto the general case in Section \ref{sec: tang fuk}. We assume $X$ is \emph{graded}, meaning it is equipped with a lift of the classifying map of $TX \oplus \bC^{N-d}$ to $BS_\pm U(N)$, for some $N \gg 0$.

    For a closed exact Lagrangian $L \subseteq X$, a \emph{grading} on $L$ is a homotopy between the two ways around the following diagram:
    \begin{equation}
        \xymatrix{
            L
            \ar[r]
            \ar[d]_{TL \oplus \bR^{N-d}}
            &
            X
            \ar[d]
            \\
            BO(N)
            \ar[r]_{\cdot \otimes \bC} 
            &
            BS_\pm U(N)
        }
    \end{equation}
    This is equivalent to the notion of grading from \cite{Seidel:graded}. The obstruction to existence of a grading is the \emph{Maslov class} in $H^2(X,L)$.

    We fix a finite set $\cL$ of closed exact graded Lagrangians in $X$. We call $\cL$ a choice of \emph{Lagrangian data}. These will be the objects of the category $\cF(X;\Psi_{unor})$. 

    For each pair of objects $L, K \in \cL$, we choose regular Floer data $(H_t, J_t)$, which in particular satisfies $\phi_{H_t}^1(L) \pitchfork K$. We write $\cX(L,K)$ for the set of time-1 Hamiltonian chords from $L$ to $K$, with respect to $H_t$, and for $x,y \in \cX(L,K)$, we write $\cM^{LK}_{xy}$ for the space of (possibly broken) Floer trajectories from $x$ to $y$, defined using the Floer data $(H_t, J_t)$. The grading data defines an integer $|x| \in \bZ$ for each $x \in Ob(\cM^{LK})$ (our grading conventions follow \cite[Remark 1.4]{PS}, i.e. homological grading with unit in degree zero). 

    \begin{thmdef}\label{thmdef:unor fuk}
        \begin{enumerate}
            \item Each $\cM^{LK}$ defines a(n unoriented) flow category.

            The morphism group $\cF_i(L,K;\Psi_{unor}) := \Omega_i(\cM^{LK})$ is defined to be the group of right modules of degree $i$ over the Floer flow category $\cM^{LK}$.
            \item For each triple $L,K,P \in \cL$, there is a bilinear map $\cM^{LKP}: \cM^{KP} \times \cM^{LK} \to \cM^{LP}$.

            Composition in $\cF(X; \Psi_{unor})$ is defined to be the bilinear map of groups $\cM^{LKP}_*$ induced by $\cM^{LKP}$, as in Equation \ref{eqn:composition from bilinear map}.
            \item For each $L$, there is a distinguished element $\cE^L \in \cF(L,L; \Psi_{unor})$. 

            There is a bordism between the composition $\cE^L \circ \cE^L$ and $\cE^L$. Using this, it follows \cite[Section 7.5]{PS} that $\cE^L$ defines a unit for $L$ in the category $\cF(X; \Psi_{unor})$.
            \item For each quadruple $L,K,P,Q \in \cL$, there is an associator $\cM^{LKPQ}$, ensuring that composition in $\cF(X; \Psi_{unor})$ is associative.
        \end{enumerate}
        
    \end{thmdef}
    The technical material that underlies this theorem is due to \cite{Large,FO3:smoothness}, and the idea of constructing a category with morphism groups being bordism classes of flow modules is due to \cite{AB2}.
    
    Roughly, these are built from spaces of pseudoholomorphic curves as follows; see \cite[Section 7]{PS} for more detail (including a recap of the relevant parts of \cite{Large,FO3:smoothness}):
    \begin{enumerate}
        \item For $x,y \in \cX(L,K)$, each $\cM^{LK}_{xy}$ is a moduli space of pseudoholomorphic discs with two punctures, one input and one output, and boundary on $L$ and $K$.
        \item Each $\cM^{LKP}_{xy;z}$ is a moduli space of pseudoholomorphic discs with three punctures; two inputs are sent to $x$ and $y$ respectively, and the output to $z$. The boundary is required to lie on $L$, $K$ and $P$.
        \item Each $\cE^L_x$ is a moduli space of discs with one output puncture sent to $x$, and boundary on $L$.
        \item Each $\cM^{LKPQ}_{xyz;w}$ is a moduli space of discs with four punctures; three are inputs sent to $x,y,z$ and the output is sent to $w$. 
    \end{enumerate}

    \begin{rmk}\label{rmk:abusively drop morse}
        In abuse of notation, we will write $\cM^{LL}$ for the flow category associated to the pair $(L, \phi_f^1(L))$ where $f$ is a $C^2$-small Morse function on $L$. Then $\cX(L,L)$ bijects with the critical points of the Morse function $f$.
    \end{rmk}
        \begin{rmk}\label{rmk:inputs and outputs}
    Inputs and outputs are distinguished by whether one fixes incoming or outgoing strip-like ends, which in turn determines the asymptotics of the associated Floer data, see \cite{Seidel:book,PS,PS2}.
    \end{rmk}

\begin{figure}[ht]
\begin{center}
\begin{tikzpicture}

\draw[semithick] (-6,-0.5) -- (-2,-0.5);
\draw[semithick] (-6,0.5) -- (-2,0.5);
\draw[->] (-6.25,0) -- (-5.75,0);
\draw (-6.5,0) node {$x$};
\draw[->] (-2.25,0) -- (-1.75,0);
\draw (-1.5,0) node {$y$};
\draw (-4,-1) node {$L$};
\draw (-4,1) node {$K$};

\draw (2,-0.5) arc (-90:90:0.5);
\draw (1.5,-0.5) -- (2,-0.5);
\draw (1.5,0.5) -- (2,0.5);
\draw (1.9,0) node {$K$};

\draw (1.5,-1) -- (2.5,-1);
\draw (1.5,1) -- (2.5,1);
\draw (2.5,-1) arc (-90:0:0.25);
\draw (2.75, -0.75) arc (180:90:0.25);

\draw (2.5,1) arc (90:0:0.25);
\draw (2.75, 0.75) arc (-180:-90:0.25);
\draw (3,0.5) -- (4,0.5);
\draw (3,-0.5)--(4,-0.5);

\draw[->] (1.25,-0.75) -- (1.75,-0.75);
\draw[->] (1.25, 0.75) -- (1.75,0.75);
\draw[->] (3.75,0) -- (4.25,0);

\draw (0.95,-.75) node {$x$};
\draw (0.95,.75) node {$y$};
\draw (4.5,0) node {$z$};

\draw (2.75,-1.25) node {$L$};
\draw (2.75,1.25) node {$P$};

\end{tikzpicture}
\end{center}
\caption{Labelling conventions: an element of $\mathcal{M}^{LK}_{xy}$ and of $\mathcal{M}^{LKP}_{xy;z}$}
\end{figure}
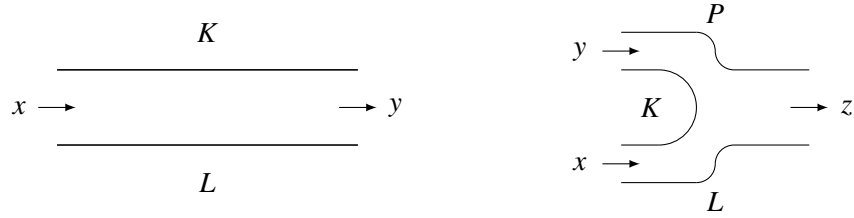

    \begin{rmk}
        Suppose we fix a spider on $L$. Then the category $\cM^{LK}$ naturally lives over $L$ in the sense of Section \ref{sec:tangerine}.
    \end{rmk}
    
    Along with the category $\cF(X; \Psi_{unor})$, we also have a version of the open-closed map:
    \begin{thm}[{\cite[Section 7.6]{PS}}]\label{thm: unor OC mod}
        \begin{enumerate}
            \item For each object $L \in \cL$, there is a distinguished left module $\Upsilon_L \in \Omega^d(\cM^{LL})$ living over $X$.
            \item There is a bordism over $X$, from the composition $\Upsilon_L \circ \cE^L$, which is a closed $d$-manifold lying over $X$, to $L$.
            \item For any pair $L, K \in \cL$, there is a bilinear map $\cW^L: \cM^{LK} \times \cM^{KL} \to \pt[d]$ lying over $X$.
            \item There are bordisms of bilinear maps between $\Upsilon^L \circ \cM^{LKL}$ and $\cW^L$, lying over $X$.
            \item For $L \neq K$, identifying $\cM^{LK} \times \cM^{KL}$ and $\cM^{KL} \times \cM^{LK}$ and recalling Remark \ref{rmk:bilinear map bijection}, the bilinear maps $\cW^L$ and $\cW^K$ are homotopic (and in particular, bordant) over $X$.
        \end{enumerate}
    \end{thm}
    \begin{rmk}\label{rmk: ups fact}
        Similarly to Remark \ref{rmk:abusively drop morse}, for appropriate choices of Floer data, the evaluation map from $\Upsilon_L$ to $X$ naturally factors through $L$, and so it defines a module over $L$. The same is true of the bordism in Theorem \ref{thm: unor OC mod}(2).
    \end{rmk}
 
    Any object $L \in \cL$ tautologically determines a class $\iota^L_*[L] \in \Omega^{unor}_d(X)$ in the unoriented bordism group of $X$.
    \begin{lem}\label{lem: OC same fund unor}
        Let $L, K \in \cL$ be objects in $\scrF(X; \Psi_{unor})$. Assume they are isomorphic in $\scrF(X; \Psi_{unor})$. 

        Then $\iota^L_*[L]=\iota^K_*[K] \in \Omega^{unor}_d(X)$, i.e. $L$ and $K$ are bordant over $X$.
    \end{lem}
    \begin{proof}
        Let $\cA \in \scrF_*(L, K; \Psi_{unor})$ and $\cB \in \scrF_*(K, L; \Psi_{unor})$ be a pair of inverse isomorphisms. This implies that $\cA \circ \cB$ is bordant to $\cE^K$, and $\cB \circ \cA$ to $\cE^L$.

        By Theorem \ref{thm: unor OC mod}(2), $L$ is bordant to $\Upsilon_L \circ \cE^L$ over $X$, and hence to $\Upsilon_L \circ (\cB \circ \cA)$. By Theorem \ref{thm: unor OC mod}(4), this is bordant to $\cW^L \circ (\cA, \cB)$ over $X$. By Theorem \ref{thm: unor OC mod}(5), this is bordant to $\cW^K \circ (\cB, \cA)$ over $X$; by the same argument as above, this is bordant to $K$ over $X$.
    \end{proof}
\subsection{Incorporating tangential pairs}\label{sec: tang fuk}
    Let $\Psi=(\Theta \to \Phi)$ be a commutative tangential pair; recall this is a pair of commutative $\cI$-spaces (satisfying a condition) sitting in a commutative diagram:
    \begin{equation}
        \xymatrix{
            \Theta 
            \ar[r]
            \ar[d]
            &
            \Phi 
            \ar[d]
            \\
            BO 
            \ar[r]
            &
            BS_\pm U
        }
    \end{equation}
    Let $E_\Psi$ be the commutative Thom $\cI$-monoid  defined in Section \ref{sec:abstract discs and the Thom I-space}. Recall its base is equivalent to $\hofib(\Omega \Theta \to \Omega \Phi)$.
    \begin{defn}\label{def: phi thet or}
        Let $X$ be a Liouville domain. A \emph{$\Phi$-orientation} $\phi$ on $X$ is a lift of the classifying map for $TX$, viewed as a stable complex vector bundle, to $\Phi$. Explicitly, this consists of a map $\phi: X \to \Phi(N)$ for some $N \gg 0$, such that the following diagram commutes:
        \begin{equation}
            \xymatrix{
                X
                \ar[r]^\phi 
                \ar[dr]_{TX \oplus \bC^{N-n}}
                &
                \Phi(N)
                \ar[d]
                \\
                &
                BU(N)
            }
        \end{equation}
    
        Let $L \subseteq X$ be a Lagrangian submanifold. A \emph{$\Theta$-orientation} $\theta$ on $L$ is a homotopy lift of the classifying map for $TL$ to $\Theta$, compatible with $\phi$. Explicitly, this consists of a map $\theta: L \to \Theta(N)$ lifting the classifying map for $TL$ (similarly to above for $X$ and $\phi$), and which is compatible with $\phi$, meaning the following diagram commutes up to a given homotopy: 
        \begin{equation}
            \xymatrix{
                L 
                \ar[r]
                \ar[d]_\theta
                &
                X
                \ar[d]_\phi
                \\
                \Theta(N)
                \ar[r]
                &
                \Phi(N)
            }
        \end{equation}  
    \end{defn}
    \begin{rmk}
        A $\Phi$-orientation on $X$ (and a compatible $\Theta$-orientation on $L$) equip $X$ (and $L$) with gradings in the sense of \cite{Seidel:graded}, as in Section \ref{sec:unoriented category}.
    \end{rmk}
  
    \begin{asmp}\label{asmp:tang}
        We assume $X$ is equipped with a $\Phi$-structure $\phi$, and that all objects $L \in \cL$ are equipped with compatible $\Theta$-structures. 
    \end{asmp}

    \begin{defn}\label{def: Phi x tild}
        Let $\phi$ be a $\Phi$-structure on $X$. Let $\widetilde X$ be the homotopy pullback of the maps $X, \Theta(N) \to \Phi(N)$; this maps to $\Theta(N)$ and hence to $BO(N)$. Let $T \to \widetilde X$ be the pullback of the tautological bundle over $BO(N)$. A \emph{$E_\Psi$-orientation relative to $\phi$} manifold/left module/bilinear map/bordism lying over $X$ is a $E_\Psi$-orientation relative to $T$ and lying over $\widetilde X$ (lifting the maps to $X$), in the sense of Section \ref{sec:bordism coefficients for obstruction theory}.
    \end{defn}
    
    \begin{thm}\label{thm:ind comm}
        Under Assumption \ref{asmp:tang}: 

        The flow categories, bilinear maps, units and associators considered in Theorem \ref{thmdef:unor fuk} all compatibly admit $E_\Psi$-orientations. Furthermore, these $E_\Psi$-orientations are canonical up to concordance in the sense of Example \ref{ex:concordance of Flow orientations}.

        Similarly, the left modules, bilinear maps and bordisms considered in Theorem \ref{thm: unor OC mod} are compatibly $E_\Psi$-oriented relative to the $\Phi$-orientation $\phi$ on $X$, and (in the case of (2)) the natural $E_\Psi$-orientation on $L$ relative to $\phi$.
    \end{thm}
    We defer the proof of Theorem \ref{thm:ind comm} to Section \ref{sec: proo ind comm}.

    \begin{defn}\label{def: E fuk}
        We define the category $\cF(X; \Psi)$ to have:
        \begin{itemize}
            \item Objects are elements $L \in \cL$.

            Recall this consists of some chosen collection of closed exact Lagrangians $L \subseteq X$, equipped with $\Theta$-orientations, compatible with $\phi$, the chosen $\Phi$-orientation on $X$.
            \item Morphisms from $L$ to $K$ are the right bordism groups $\cF_*(L,K;\Psi) := \Omega_*^E(\cM^{LK})$.
            \item Composition is induced by the $E$-oriented bilinear maps $\cM^{LKP}$.
            \item The units are the $E$-oriented right modules $\cE^L \in \cF_0(L,L;\Psi)$.
        \end{itemize}
    \end{defn}
   
    \begin{defn}\label{def: trunc fuk}
        For $i \geq 0$, we define the $\tau_{\leq i} \scrF(X; \Psi)$ to be the category with objects the same as $\scrF(X; \Psi)$, and morphisms $L$ to $K$ given by $\Omega^{E_\Psi, \leq i}_*(\tau_{\leq i}\cM^{LK})$, the $i$-truncated (bordism classes of) left module over the truncated Floer flow category $\tau_{\leq i}\cM^{LK}$.

        Composition is defined using the truncation of the usual Floer bilinear maps.
    \end{defn}
\subsection{Open-closed map with tangential pairs\label{sec:OC definitions}}

    Let $\Psi = (\Theta \to \Phi)$ be a commutative tangential pair, $F = \hofib(\Theta \to \Phi)$ the homotopy fibre and $E_\Psi$ the commutative Thom $\cI$-monoid constructed in Section \ref{sec:abst disc fix case}. Let $X$ be a Liouville domain equipped with a $\Phi$-orientation $\phi: X \to \Phi(N)$.

    Let $T \to \widetilde X$ be as in Definition \ref{def: Phi x tild}. We write $\Omega^{E_\Psi,\cO\cC}_i(X, \phi)$ for the groups $\Omega^{E_\Psi}_*(\widetilde X, T)$. Note that for $N \gg 0$ this is independent of $N$: more precisely, for fixed $i$ and sufficiently large $N \gg 0$, letting $\phi'$ be the composition of $\phi$ with $\Phi(N) \to \Phi(N+1)$, $\Omega^{E_\Psi,\cO\cC}_i(X, \phi) \cong \Omega^{E_\Psi,\cO\cC}_i(X, \phi')$. 
    
    For a $\Psi$-oriented Lagrangian $L \subseteq X$, we define $\Omega^{E_\Psi,\cO\cC}_*(L,\phi|_L)$ similarly using the restriction of $\phi$ to $L$.

    \begin{rmk}\label{rmk:explicit OC bordism class data}
        The groups $\Omega^{E_\Psi; \cO\cC}_i(X, \phi)$ are naturally isomorphic to the set of cobordism classes of tuples $(M, f, k, \rho, h)$ where:
        \begin{itemize}
            \item $f: M \to X$ is a closed $i$-manifold over $X$.
            \item $k \in \cI$.
            \item $\rho: (M, TM)(k+N) \to (\Theta(N), \Gamma) \times E(i+k)$ is a map of Thom spaces, where $\Gamma \to \Theta(N)$ is the pullback of the universal bundle over $BO(N)$.
            \item $h$ is a homotopy between the two maps $M \to \Phi(N)$ which factor through $f$ and $\Base(\rho)$ respectively.
        \end{itemize}
    \end{rmk}

The definition of the groups $\Omega_*^{E_{\Psi};\cO\cC}(\bullet)$ makes sense for any pair $(Y,\phi)$ where $Y$ is a cell complex with a complex vector bundle $\phi: Y \to \Phi \to BU$ equipped with a  $\Phi$-lift $\phi$. Recall that 
the ring spectrum $R_{\Psi}$ has an associated generalised cohomology theory $R^*(\cdot)$.
    \begin{lem} \label{lem:module structure for OC bordism}
        The groups $\Omega_*^{E_{\Psi};\cO\cC}(Y,\phi)$ are modules over $R_{\Psi}^{-*}(Y)$.
    \end{lem}
         \begin{proof}[Proof sketch]
        Let $f: M \to Y$ be as above. Let $\alpha \in R^{-i}(Y)$; represent this my a map $Y \to \Omega^{\infty+i}R$. This lifts to a map $Y \to \Omega^{j+i}\Thom(E(j))$ for some $j \gg 0$.

        Considering the composition $Y \to \Omega^{j+i}\Thom(E(j))$ and taking adjoints, we obtain a map $M_+ \wedge S^{j+i} \to \Thom(E(j))$. Generically perturbing and taking the preimage of the zero-section defines a new $(j+i)$-dimensional manifold $M'$; it has a natural map to $Y$ along with the relevant tangential structures to determine a class in $\Omega^{E_\Psi; \cO\cC}_*(Y,\phi)$. We leave to the reader that this gives a well-defined and  linear action.
    \end{proof}

    \begin{lem}\label{lem: def oc or}
        Let $L \in \cL$ be a $\Theta$-oriented Lagrangian in $X$. Then $L$ admits a natural $E_\Psi$-orientation relative to $\phi|_L$, and hence defines a class $[L] \in \Omega^{E_\Psi,\cO\cC}_d(L; \phi|_L)$.
    \end{lem}
    Pushing forwards along the inclusion defines $\iota^L: L \to X$ then defines a class $\iota^L_*[L] \in \Omega^{E_\Psi; \cO\cC}_d(X; \phi)$.
    \begin{proof}
        This is essentially by the second part of Definition \ref{def: phi thet or}: the $E_\Psi$-orientation can be taken to be trivial. 
    \end{proof}

    \begin{lem}\label{lem: tw cap}
        Let $g: M \to L$ be a map from a closed manifold, which is $E_\Psi$-oriented relative to $\phi|_L$. Let $f: M \to \Omega^n\Thom(\bV^\Psi(n) \to \bU^\Psi(n)) = \Omega^\infty R_\Psi(n)$ for some $n$, and $\hat f: M \times S^n \to \Thom(\bV^\Psi(n) \to \bU^\Psi(n))$ a generic perturbation of its adjoint. Let $\hat M$ be the preimage of 0; note this is also naturally $E_\Psi$-oriented relative to $\phi|_L$. 
        
        Let $[f] \in R_\Psi^0(M)$ be the class represented by the composition $M \to \Omega^\infty R_\Psi(n) \to \Omega^\infty R_\Psi$. Then:
        \begin{equation}
            g_*[\hat M] = g_*\left([M] \cap [f]\right) \in \Omega^{E_\Psi,\cO\cC}_d(L, \phi|_L)
        \end{equation}
    \end{lem}

    \begin{proof}
        $[f] \in R_\Psi^0(M)$ is Atiyah dual to a class in $\Omega^{E_\Psi}_d(M; -TM)$; chasing through the Pontrjagin-Thom construction, this class is represented by $\hat M$. In terms of the cap product, the Atiyah dual class is $[M] \cap [f]$. Pushing forwards along $g_*$ gives the desired formula.
    \end{proof}

    \begin{prop}\label{prop: quas same fund no loc}
        Assume Assumption \ref{asmp:tang}. Let $L, K \in \cL$ be objects in $\scrF(X; \Psi)$.

        If $L$ and $K$ are isomorphic, then $\iota^L_*[L] = \iota^K_*[K] \in \Omega^{E_\Psi; \cO\cC}_d(X; \phi)$.

        In words: $L$ and $K$ are $E_\Psi$-oriented bordant over $X$, relative to $\phi$.
    \end{prop}
    \begin{proof}
        Follows from the same argument as Lemma \ref{lem: OC same fund unor}, now using the given $E_\Psi$-orientations from Theorem \ref{thm:ind comm}.
    \end{proof}
    \begin{rmk}[Relation to constructions in \cite{PS2}]
    
        In \cite{PS2}, associated to any graded Liouville domain $X$ with $\Phi$-structure and any set of Lagrangians $\cL$ with compatible $\Theta$-structures, we also constructed a Fukaya category with objects $\cL$. Here our construction differs: our notion of $E$-oriented flow category considered here is stronger than the definition considered in \cite[Section 5]{PS2}. 
        In particular, we are able to bypass the use of ``capsule bordism'' as the target of the open-closed map considered in \cite[Section 5.10]{PS}-- morally this is an analogue of the splitting of ring spectra $THH(R) \to R$ for a commutative Thom ring spectrum $R$ \cite{Blumberg-THH}, which fails to hold in general if $R$ is not commutative.
        
        The key difference is that in \cite{PS2}, we worked in a more general setting, where we did not assume that $\Theta$ and $\Phi$ were commutative. In this paper, the results of Section \ref{sec: OC} require strong commutativity assumptions.
    \end{rmk}

\begin{ex}\label{ex:OC bordism maps here}
    In specific cases, the bordism theory $\Omega_*^{E_{\Psi};\cO\cC}$ maps to a more familiar theory, even if it does not obviously co-incide with anything classical:
    \begin{enumerate}
    \item If $\Phi = \{pt\}$ (so $X$ is stably framed) then there is a map $\Omega_*^{E_{\Psi};\cO\cC}(X,\phi) \to \Omega_*^{E_{\Psi}}(X)$. Taking $\Theta$ the pullback of $\widetilde{U/O} \to U/O$ under $U \to U/O$, there is a map $\Omega^{E_{\Psi}}_*(X) \to MU_*(X)$.
     \item If $\Psi = (pt,BS_{\pm}U)$ (so the Lagrangians are stably framed) then again there is a map $\Omega^{E_{\Psi};\cO\cC}(X,\phi) \to MU_*(X)$.
        \item If $(\Theta \to \Phi) = (BO \times F \stackrel{\pi_1}{\longrightarrow} BO)$, then $X$ is polarised, i.e. $TX \sim V\otimes \bC$ is stably a complexification. There is a map  $\Omega^{E_{\Psi};\cO\cC}(X,\phi) \to \Omega^{E_{\Psi}}_*(X,V) = \Omega^{E_{\Psi}}_*(\Thom(-V))$ to the bordism theory of manifolds $f: M \to X$ with an $E_{\Psi}$-orientation of $TM-f^*V$. In the special case $F = B(G/O)$ where $G = GL_1(\bS)$, then $\Omega^{E_{\Psi}}_* = \Omega^{\bS}_*$ is bordism of manifolds with a trivialisation of the stable tangent spherical fibration.
       
        \item The Unoriented case $\Psi = BO \to BU$. In this case the data of the homotopy $h$ in Remark \ref{rmk:explicit OC bordism class data} means that $\Omega^{E_{\Psi};\cO\cC}_*(X,\phi)$ is the bordism of pairs of a manifold $f: M \to X$ equipped with a stable polarisation of $f^*TX$. This admits a forgetful map to $MO_*(X)$.
    \end{enumerate}
\end{ex}

\subsection{Proof of Theorem \ref{thm:ind comm}}\label{sec: proo ind comm}
    \begin{proof}
        
        We prove Theorem \ref{thm:ind comm} in stages.

        \subsubsection*{Single moduli space, 1 output}

        We begin by constructing a $E_\Psi$-orientation on a single Floer moduli space $\cM$ with 1 output (so one of the moduli spaces considered in Theorem \ref{thmdef:unor fuk}).
        Let $i$ be the number of inputs of $\cM$. We assume we have chosen index data $d_\cM \in \cI$, as in Definition \ref{def: ind data}. We wish to construct a map of Thom spaces 
        \begin{equation} \label{eq: goal ind comm}
            (\cM, I^\cM)(\nu_\cM) \to (\bU^\Psi[l](\nu_\cM + d_\cM), \bV)
        \end{equation}
        for some $\nu_\cM \in \cI$. Here $l=0$ for Floer moduli spaces $\cM$ arising in Theorem \ref{thmdef:unor fuk}(1), $l=1$ if in Theorem \ref{thmdef:unor fuk}(2) or (3), and $l=2$ if in Theorem \ref{thmdef:unor fuk}(4).

    \subsubsection*{Set-up from \cite{PS2}}

        There is a choice of framed puncture data $\bE_x \in \cX(\cL)$ with Maslov index equal to the grading of $|x|$, cf. \cite[Section 6]{PS2}. We write $\bE_\cM$ for the tuple of puncture data given by those of the inputs and outputs of $\cM$.
        
        The main result of \cite[Section 6]{PS2} is that for $N, \nu_\cM \gg 0$ sufficiently large, there is a map of Thom spaces:
        \begin{equation}\label{eq: PS2 reca}
            \rho^{orig}_\cM: (\cM, I^\cM)(\nu_\cM) \to (\bU^\Psi_{\bE_\cM}[l](\nu_\cM, N), \bV)
        \end{equation}
    \subsubsection*{Outline of strategy}
        We begin by separating the given map $\rho^{orig}_\cM$ into two pieces: roughly speaking, this is a lift $\rho'_\cM$ of $\rho^{orig}_\cM$ along a map of the form $\oplus: \bU^{fr}_{\ldots}(\ldots) \times_{\cR\cS_{i1}} \bU^{\Psi}_{\ldots}(\ldots) \to \bU^\Psi_{\bE_\cM}(\ldots)$.

        Next, we trivialise the index bundle over the image in the first factor $\rho'_\cM$ (which we may do since it is a space of framed discs). In the second factor, we choose a ``capping map'' $\cM \to \bU^{fr}_{01}(0, \ldots)$ (note the index bundle over the target is 0) and glue this to the inputs of the second factor $\rho''_\cM$. Together, this defines a map of Thom spaces $(\cM, I^\cM) \to (\bU^\Psi_{01}(\ldots), \bV) \oplus \bR^{d_\cM}$. Lifting along the (highly-connected) inclusion $\bU^\Psi(\ldots) \to \bU^\Psi_{01}(\ldots) \oplus \bR^{d_\cM}$ (cf. Remark \ref{rmk: fix dom case}) provides the desired $\bE_\Psi$-orientation on $\cM$. 
    \subsubsection*{Separating into two pieces}
        
        We choose $N', \nu'_\cM, \nu''_\cM \gg 0$ large, and injections 
        \begin{equation}\label{eq: eruhgeroghedsough}
            \xymatrix{
                [N'] \sqcup [\nu''_\cM]
                \ar[r]
                &
                [N]
                &
                \textrm{ and }
                &
                [\nu'_\cM] \sqcup[\nu''_\cM]
                \ar[r]
                &
                [\nu_\cM]
            }
        \end{equation}
        We may assume that $\bE_\cM = \bE^{std}_{i1} \oplus \bE'_\cM$, for some other framed puncture data $\bE'_\cM$.

        Consider the following homotopy lifting problem:
        \begin{equation}\label{eq: htpy lift i}
            \xymatrix{
                &
                \bU^{fr}_{\bE'_\cM}(\nu'_\cM, N') \times_{\cR\cS_{i1}} \bU^\Psi_{i1}(\nu''_\cM, \nu''_\cM)
                \ar[d]_\oplus
                \\
                \cM
                \ar@{-->}[ur]
                \ar[r]_\rho
                &
                \bU^\Psi_{\bE_\cM}(\nu_\cM, N)
            }
        \end{equation}  
        The right vertical map is induced by the maps (\ref{eq: eruhgeroghedsough}), and is highly connected (more precisely: for sufficiently large choices of $N',\nu'_\cM,\nu''_\cM$, its connectivity is larger than the dimension of $\cM$), so we may choose such a homotopy lift. We write $\rho'_\cM:\cM \to \bE^{fr}_{\bE'_\cM}(\nu'_\cM, N')$ and $\rho''_\cM: \cM \to \bU^\Psi_{i1}(\nu''_\cM, \nu''_\cM)$ for the two factors. The homotopy lift $(\rho'_\cM, \rho''_\cM)$ extends naturally to one of Thom spaces; explicitly, after including the fibre product into the product, we obtain a map of Thom spaces:
        \begin{equation}
            (\cM, I^\cM)(\nu_\cM) \to \left(\bU^{fr}_{\bE'_\cM}(\nu'_\cM, N') \times \bU^\Psi_{i1}(\nu''_\cM, \nu''_\cM), \bV \oplus \bV \oplus \bR^{\nu_\cM-\nu'_\cM-\nu''_\cM} \oplus \bR^{N-N'-\nu''_\cM} \oplus \bR^l\right) 
        \end{equation}
        Note that we do not treat the summands $(\bU^{fr}_{\bE'_\cM}(\ldots), \bV)$ and $(\bU^\Psi_{i1}(\ldots), \bV)$ on the RHS separately: in particular, we do not separate $I^\cM$ into two summands, and then lift them separately. We adopt the same convention in future lifting arguments without comment. 
    \subsubsection*{The pieces}
        $\bU^{fr}_{\bE'_\cM}(\nu'_\cM, N')$ is highly connected, so we may trivialise the index bundle $\bV$ over the image of $\cM$; note its rank is $\nu'_\cM + d_\cM$.

        $\rho''_\cM$ lands in a space of abstract discs with $i$ inputs; our goal (\ref{eq: goal ind comm}) is to land in a space with none. To deal with this, we glue ``caps'' to these inputs. Explicitly, choose a map $\gamma_\cM: \cM \to (\bU^{fr}_{01}(0, \nu''_\cM))^i$. 
        The target is not in general contractible (even as $\nu''_\cM \to \infty$), but by Lemma \ref{lem: cr sect} it contains a contractible subspace; enforcing that $\gamma_\cM$ lands in this subspace thus specifies $\gamma_\cM$ up to contractible choice. Also note the index bundle of the codomain of $\gamma_\cM$ is 0. Now compose $\rho''_\cM$ with the map which glues these discs on, and we obtain a map $\cM \xrightarrow{\gamma_\cM \# \rho''_\cM} \bU^\Psi_{01}(\nu''_\cM, \nu''_\cM)$; since the index bundle over the space of caps used here is 0, this naturally extends to a map of Thom spaces.

        The inclusion $\bU^\Psi(\nu''_\cM) \to \bU^\Psi_{01}(\nu''_\cM, \nu''_\cM)$ is highly-connected, so we may lift $\gamma_\cM \# \rho''_\cM$ along it. Combining with the trivialisation of the framed part above, we obtain a map of Thom spaces $(\cM, I^\cM)(\nu''_\cM) \to \bU^\Psi[l](\nu''_\cM) \oplus \bR^{d_\cM}$, which we then may compose with the map $E_\Psi(\nu''_\cM)\oplus \bR^{d_\cM} \to E_\Psi(\nu''_\cM + d_\cM)$ to obtain (\ref{eq: goal ind comm}).

        We next explain how to modify the above argument so that the resulting $E_\Psi$ orientations are compatible with Floer gluing. 
    \subsubsection*{Coherent combinatorics}
    
        We choose $\nu'_\cM$ and $\nu''_\cM$ each as part of some choice of stabilisation data in the sense of Section \ref{sec: or flow cat I}. We may additionally choose the maps (\ref{eq: eruhgeroghedsough}) to be compatible with the injections which are part of the stabilisation data. 
        
        We may assume the choices of puncture data are compatible with this data, namely that for any inclusion of a boundary face $\prod_l \cA_l \to \cM$ (here $\cA_l$, $\cM$ are moduli spaces appearing in the statement of the theorem), $\oplus_l \bE'_{\cA_l}$ stabilises to $\bE'_\cM$. See Section \ref{def: Floer mod spac} for further discussion. Recall that each such moduli space is a manifold with faces, with a system of faces given by products of other such moduli spaces of lower dimensions.

    \subsubsection*{Coherent lifts}
        It is proved in \cite[Section 6]{PS2} that the maps $\rho^{orig}_\cM$ are compatible with inclusions of boundary faces on the LHS of (\ref{eq: PS2 reca}) and with gluing of abstract discs on the RHS of (\ref{eq: PS2 reca}).

        The right hand column of (\ref{eq: htpy lift i}) is also compatible with inclusions of boundary faces (as a diagram of Thom spaces). Note that the gluing operation on the top right entry of (\ref{eq: htpy lift i}) comes from concatenation in the two factors separately, along with appropriate stabilisation. By choosing the homotopy lift inductively over $\dim(\cM)$, we may therefore assume the homotopy lifts $(\rho'_\cM, \rho''_\cM)$ are also compatible with inclusions of boundary faces.

        Since all $\bU^{fr}_{\bE'_\cM}(\nu''_\cM, \nu''_\cM)$ are highly connected, by choosing them inductively over $\dim(\cM)$, we may trivialise their index bundles over the images of each $\cM$ coherently with respect to inclusions of boundary faces.

        Now let $\cM \times \cN \to \cP$ be an inclusion of a boundary faces. $\gamma_\cP \# \rho''_\cP$ is not directly related to $\gamma_\cM \# \rho''_\cM$ and $\gamma_\cN \# \rho''_\cN$; 
        
        Note that $\gamma_\cM \# \rho''_\cM$ and $\gamma_\cN \# \rho''_\cN$ can't be $\oplus$ed together over $\cM \times \cN$, since they don't land in a space of abstract discs with the same domain. The inclusion of the subspace with the same domains $\bU^\Psi_{01}(\ldots) \times_{\cR\cS_{01}} \bU^\Psi_{01}(\ldots ) \to \bU^\Psi_{01}(\ldots) \times \bU^\Psi_{01}(\ldots)$ is highly connected, and so a homotopy lift of $(\gamma_\cM \# \rho''_\cM, \gamma_\cN \# \rho''_\cN)$ exists, and in fact $\gamma_\cP \# \rho''_\cP$ is obtained from $(\gamma_\cM \# \rho''_\cM, \gamma_\cN \# \rho''_\cN)$ by choosing some homotopy lift (of maps of Thom spaces), and then direct summing. The space of choices in how one does this is highly connected (note that the contractible space of choices of the capping maps is incorporated into this). A more restrictive (but still highly-connected in terms of the space of choices) way to do this is to choose homotopy lifts of the $E_\Psi$-orientations of $\cM$ and $\cN$ along the inclusion $\bU^\Psi(\ldots) \to \bU^\Psi_{01}(\ldots)$. Hence the stabilisation of $E_\Psi$-orientation on $\cM \times \cN$ is homotopic to the restriction of that of $\cP$.
        
        Using this, over a collar neighbourhood of the face $\cM \times \cN$ in $\cP$, we may homotope the $E_\psi$-orientation on $\cP$ so that these agree on the nose.

        The same argument applies to more general inclusions of boundary faces $\prod_l \cA_l \to \cP$. By first inducting over $\dim(\cP)$ and then inducting over the dimension of the faces of $\cP$, we may homotope the $E_\Psi$-orientation on $\cP$ so that it agrees with the direct sums of the $E_\Psi$-orientations on its faces induced by the direct sums of those of the lower-dimensional moduli spaces; this is exactly the required coherence.
    \subsubsection*{0 output case: set-up}
        Now assume $\cM$ is a moduli space of pseudohomolorphic curves with no outputs, i.e. one arising in the statement of Theorem \ref{thm: unor OC mod}. Then instead of (\ref{eq: PS2 reca}), \cite[Section 6]{PS2} produces a map of Thom spaces:
        \begin{equation}\label{eq: PS2 reca 0 out}
            \rho^{orig}_\cM: (\cM, I^\cM)(\nu_\cM+N) \to (\bU^\Psi_{\bE_\cM}[l](\nu_\cM, N), \bV)
        \end{equation}
        Furthermore, there are maps of spaces $\eps: \cM \to X$, such that the following diagram commutes:
        \begin{equation}
            \xymatrix{
                \cM
                \ar[r]_\eps
                \ar[d]
                &
                X
                \ar[d]_\phi
                \\
                \bU^\Psi_{\bE_\cM}[l](\nu_\cM,N)
                \ar[r]
                &
                \Phi(N)
            }
        \end{equation}  
        Both horizontal maps are given by evaluating at the interior marked point.
        
        $\bE_\cM$ is defined similarly to before, and $l=1$ if we are considering a moduli space from Theorem \ref{thm: unor OC mod}(1) or (3), and $l=2$ otherwise. Recall from Definition \ref{def: RSij} that when $\cM$ has no outputs (as is the case currently), elements of $\bU_{\bE_\cM}^\Psi(\ldots)$ are equipped with a marked point $p$ in the interior of the domain Riemann surface, along with a tangent direction at $p$, pointing towards some point on (the smoothing of) the boundary of the domain Riemann surface.
    \subsubsection*{0 outputs: lifting}

        We make choices (\ref{eq: eruhgeroghedsough}) as before, and ``separate into two pieces'' by choosing a homotopy lift analogously to (\ref{eq: htpy lift i}), but with $\bU^\Psi_{i0}(\ldots)$ in the top right of the diagram instead of $\bU^\Psi_{i1}(\ldots)$. We now have a map $(\rho', \rho''): \cM \to \bU^{fr}_{\bE'_\cM}(\nu'_\cM, N') \times_{\cR\cS_{i0}} \bU^\Psi_{i0}(\nu''_\cM, \nu''_\cM)$, along with an extension to a map of Thom spaces.

        We may trivialise the index bundle over the image of $\rho'$ similarly to before; note this has rank $d_\cM+\nu'_\cM+N'$. We may choose a capping map $\gamma_\cM$ as before to obtain $\gamma_\cM \# \rho''_\cM: \cM \to \bU^\Psi_{00}(\nu''_\cM, \nu''_\cM)$ (and an extension to a Thom map). At this stage, we must deviate slightly from the earlier construction.

        We define two new types of space of abstract discs. These are variants of Definition \ref{def: big abst disc defn}:

        \begin{itemize}
            \item $\bU^{\Psi,fr}_{0*}(v, n) \subseteq \bU^\Psi_{00}(v, n)$ is defined to be the space of abstract discs with 0 inputs and 0 outputs, such that if $p$ is the point in the domain Riemann surface which the asymptotic marker points to, the boundary conditions are trivialised over $p$ (i.e. the homotopy lift to $\Theta$ over $p$ is lifted further to $fr$).    
            \item $\bU^{const,\Theta}_{00}(v, n) \subseteq \bU^{(\Theta,\Theta)}_{00}(v, n)$ is the subspace where the map from the domain Riemann surface to $\Theta$ is constant.

            Here $(\Theta, \Theta)$ is the commutative tangential pair where both spaces are given by $\Theta$.
        \end{itemize}
        There are maps $\bU^{\Psi, fr}_{0*}(v,n) \to \Omega F(n)$ and $\bU^{const,\Theta}_{00}(v,n) \to \Theta(n)$ whose connectivity go to infinity as $v \to \infty$. Both of these spaces naturally map also to $\bU^\Psi_{00}(v, n)$, and are equipped with the pullback of the index bundle from it. Note that by the constancy condition, the index bundle over $\bU^{const,\Theta}_{00}(v, n)$ is isomorphic to the bundle pulled back along the composition to $\Theta(n) \to BO(n)$.
        
        Recall that $\bU^\Psi_{00}(\ldots)$ approximates the homotopy limit of the diagram given by the two maps $\cL \Theta_{h\cI}, \Phi_{h\cI} \to \cL \Phi_{h\cI}$; since these are infinite loop spaces, this is equivalent to $\Omega F_{h\cI} \times \Theta_{h\cI}$, the product of the subspaces defined above.

        We choose $\mu'_\cM, \mu''_\cM \gg 0 \in \cI$ and an injection $[\mu'_\cM] \sqcup [\mu''_\cM] \to [\nu''_\cM]$, and consider the homotopy lifting problem:
        \begin{equation}
            \xymatrix{
                &
                \bU^{\Psi,fr}_{0*}(\mu'_\cM, \mu'_\cM) \times_{\cR\cS_{00}} \bU^{const,\Theta}_{00}(\mu''_\cM, \mu''_\cM)
                \ar[d]_\oplus 
                \\
                \cM 
                \ar[r]_{\gamma_\cM \# \rho''_\cM} 
                \ar@{-->}[ur]
                &
                \bU^\Psi_{00}(\nu''_\cM, \nu''_\cM)
            }
        \end{equation}
        By the arguments above, the vertical map is highly connected, so we may choose a homotopy lift; write $(\sigma'_\cM, \sigma''_\cM)$ for it. We compose $\sigma''_\cM$ with the map $\bU^{const,\Theta}_{00}(\mu''_\cM, \mu''_\cM) \to \Theta(\mu''_\cM)$. 

        Now choose some framed ``co-cap'' $C \in \bU^{fr,fr}_{1*}(0, \mu'_\cM)$; there is a contractible space of such choices as before. We choose a homotopy lift:
        \begin{equation}
            \xymatrix{  
                &
                \bU^\Psi_{01}(\mu'_\cM, \mu'_\cM) 
                \ar[d]_{\cdot \# C}
                \\
                \cM 
                \ar[r]_{\mu'_\cM}
                \ar@{-->}[ur]
                &
                \bU^{\Psi,fr}_{0*}(\mu'_\cM,\mu'_\cM)
            }
        \end{equation}  
        We then choose a further homotopy lift along the (highly-connected) inclusion $\bU^\Psi(\mu'_\cM) \to \bU^\Psi_{01}(\mu'_\cM, \mu'_\cM)$. 

        Combining the above, we have constructed a map $\cM \to \bU^\Psi(\mu'_\cM) \times \Theta(\mu''_\cM)$, which may be extended to a map of Thom spaces; we call this $\rho_\cM$. 
    \subsubsection*{Relativity to $\phi$}
        Evaluation at the interior marked point determines maps $\bU^{\Psi,fr}_{0*}(v,n) \to \Phi(n)$ and $\bU^{const,\Theta}_{00}(v,n) \to \Theta(n) \to \Phi(n)$. The first of these is nullhomotopic, via a nullhomotopy given by evaluating along the path from the interior marked point to the boundary marked point (whose evaluation to $\Phi(n)$ is trivial by definition). 
        
        Since each of the lifts in the above steps are compatible with maps to $\Phi(\cdot)$, we find that the composition $\cM \to X \to \Phi(N)$ factors through the map $(\sigma'_\cM, \sigma''_\cM)$. By the above observation, we then have that the following diagram commutes:
        \begin{equation}
            \xymatrix{
                \cM
                \ar[r]_\eps
                \ar[d]_{\rho_\cM} 
                &
                X
                \ar[r]_\phi 
                &
                \Phi(N)
                \\
                \bU^\Psi(\mu'_\cM) \times \Theta(\mu''_\cM) 
                \ar[r]
                &
                \Theta(\mu''_\cM)
                \ar[r]
                &
                \Theta(N) 
                \ar[u]
            }
        \end{equation}  
        so $\rho_\cM$ is exactly the data of a $E_\Psi$-orientation relative to $\phi$.

        The intermediate choices can be made so that the resulting $E_\Psi$-orientations relative to $\phi$ are compatible with boundary breaking of moduli spaces.
\end{proof}

\section{Floer moduli spaces and their combinatorics}\label{sec: comb}

    In later sections, it will be convenient to keep track of the collection of all moduli spaces considered in Section \ref{sec: fuk no loc}.
\subsection{Abstract moduli spaces and gluing}
    \begin{defn}\label{def: Floer mod spac}
        Assume we have fixed a choice of Lagrangian data $\cL$. 

        The set of \emph{Floer moduli spaces (with respect to $\cL$)} is defined to be the set of all moduli spaces considered in Theorems \ref{thmdef:unor fuk} and \ref{thm: unor OC mod}, with boundary conditions given by Lagrangians in $\cL$.

        Each Floer moduli space is a manifold with faces, with a system of faces given by products of other Floer moduli spaces. Let $f: \cA_1 \times \ldots \times \cA_k \to \cM$ be an inclusion of a face, from a product of Floer moduli spaces into another Floer moduli space. We call such a map $f$ a \emph{Floer gluing}.

        We call a Floer gluing which is of the form $\cA_1 \to \cM$ (i.e. $k=1$) and codimension 1 an \emph{unbroken Floer gluing}. We call a Floer gluing which is of the form $\cA_1 \times \cA_2 \to \cM$ and codimension 1 a \emph{simple (broken) Floer gluing}.
    \end{defn}
    \begin{rmk}\label{rem: Floer glue bloc}
        Any Floer gluing $\cA_1 \times \ldots \times \cA_k \to \cM$ is a composition of products of unbroken and simple unbroken Floer gluings. In particular, it will often be sufficient to check various associativity relations on Floer gluings just of this form.
    \end{rmk}
    \begin{rmk}\label{rmk: Floer glue assoc}
        The definitions in Section \ref{sec: flow} along with Theorems \ref{thmdef:unor fuk} and \ref{thm: unor OC mod} enforce various associativity relations on Floer gluing maps; essentially, whenever one can write down a square of Floer gluing maps, it commutes. More explicitly:
        \begin{itemize}
            \item Let $\cM \times \cN \to \cQ$, $\cN \times \cP \to \cR$, $\cM \times \cR \to \cS$ and $\cQ \times \cP \to \cS$ be Floer gluings. Then the following diagram commutes:
            \begin{equation}
                \xymatrix{
                    \cM \times \cN \times \cP
                    \ar[r]
                    \ar[d]
                    &
                    \cM \times \cR 
                    \ar[d]
                    \\
                    \cQ \times \cP 
                    \ar[r]
                    &
                    \cS
                }
            \end{equation}
            \item Let $\cM \times \cN \to \cP$, $\cN \to \cQ$ and $\cM \times \cQ \to \cP$ be Floer gluings. Then the following diagram commutes:
            \begin{equation}
                \xymatrix{
                    \cM \times \cN 
                    \ar[r]
                    \ar[dr]
                    &
                    \cM \times \cQ 
                    \ar[d]
                    \\
                    &
                    \cP
                }
            \end{equation}
    
            A similar compatibility holds when $\cM$ and $\cN$ are swapped.
            \item Let $\cM \to \cN$, $\cN \to \cP$ and $\cM \to \cP$ be Floer gluings. Then the following diagram commutes:
            \begin{equation}
                \xymatrix{
                    \cM 
                    \ar[r]
                    \ar[dr]
                    &
                    \cN
                    \ar[d]
                    \\
                    &
                    \cP
                }
            \end{equation}
        \end{itemize}
        By Remark \ref{rem: Floer glue bloc}, similar associativity relations hold for all Floer gluings of the form $\cA_1 \times \ldots \times \cA_k \to \cM$, but they are all implied by the ones above.
    \end{rmk}
    
    We assumed $\cL$ was finite. Since for each type of structure considered in Theorems \ref{thmdef:unor fuk} and \ref{thm: unor OC mod} there are only finitely many Floer moduli spaces for each pair/triple/quadruple of Lagrangians, we see that:
    \begin{lem}\label{lem: fin many mod spac}
        Assume $\cL$ is finite. Then the collection of Floer moduli spaces is finite. 
        
        In particular, there exists some $n^{max} \gg 0$ such that all Floer moduli spaces have dimension $\leq n^{max}$.
    \end{lem}

\subsection{Boundary labelling data}
    \begin{defn}\label{def: 8.8}
        An \emph{$\cL$-labelled domain} $\Delta$ consists of the following data:
        \begin{itemize}
            \item Integers $0 \leq i \leq 3$ and $0 \leq j \leq 1$.
            \item A set of $i$ ``input'' marked points $x_1, \ldots, x_i$ and $j$ ``output'' marked points (called $x_0$ if $j=1$) on the boundary of the disc $\partial D^2$.
            \item A labelling of each boundary component $b$ of $D^2 \setminus \{x_l\}_l$ with an object $L \in \cL$.

            We write $\Cptd\Delta$ for the set of such boundary components.
            \item A labelling of each $x_l$ with an object (that we also call $x_l$) of the flow category $\cM^{L_{b}, L_{b'}}$, where $b, b' \in \Cptd \Delta$ are the two boundary components touching $x_l$.

            Here $b$ and $b'$ are ordered according to the following convention. If $x_l$ is an input, $b'$ is clockwise of $x_l$ (and $b$ is anticlockwise of $x_l$), and the opposite way around if $x_l$ is an output (i.e. $l=0$).

        \end{itemize}
        We require that the inputs $x_1, \ldots, x_i$ are ordered anticlockwise, so that if $j=1$, the output lies directly between $x_i$ and $x_1$.

        We consider two $\cL$-labelled domains as the same if there is an oriented homeomorphism of the disc, relating the two $\cL$-labellings.

        We write $\cD_{ij}$ for the set of $\cL$-labelled boundary data with $i$ inputs and $j$ outputs.
    \end{defn}
    \begin{defn}
        Any Floer moduli space $\cM$ determines a $\cL$-labelled domain $\Delta_\cM$ as follows. The interior of $\cM$ is defined to be the space of maps from a punctured disc $D$ with boundary punctures to $X$ satisfying a PDE (a perturbed Cauchy-Riemann equation) with Lagrangian boundary conditions, and which are required to asymptote to some fixed Hamiltonian chord at each puncture. Choose a homeomorphism between $D$ and a punctured copy of $D^2$. The boundary conditions for the PDE determine, for each component $b$ of $D$, an object $L \in \cL$. The asymptotic conditions at each puncture of $D$ determine some $x \in \cM^{L_b, L_{b'}}$, where $b$ and $b'$ are the boundary components neighbouring $x$, ordered as in Definition \ref{def: 8.8}.  
        
        For any product $\cF = \prod_l \cA_l$ of Floer moduli spaces, we set $\Cptd\Delta_\cF = \sqcup_l\Cptd\Delta_{\cA_l}$.
    \end{defn}

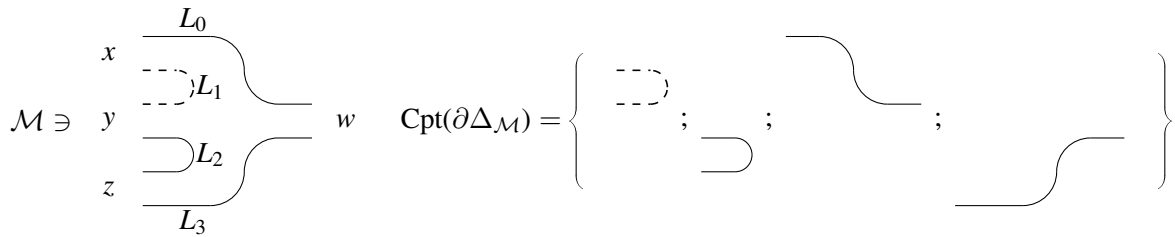
\begin{figure}[ht]
\begin{center}
\begin{tikzpicture}[scale=0.9]

\draw (0.5,0) node {$\mathcal{M} \ni $};
\draw (1.5,0) node {$y$};
\draw (1.5,1) node {$x$};
\draw (1.5,-1) node {$z$};
\draw (5,0) node {$w$};

\draw (2.75,1.5) node {$L_0$};
\draw (2.75,-1.5) node {$L_3$};
\draw (3,0.5) node {$\small{L_1}$};
\draw (3,-0.5) node {$\small{L_2}$};

\draw[semithick,dashed] (2,0.25) -- (2.5,0.25);
\draw[semithick,dashed] (2.5,0.25) arc (-90:0:0.25);
\draw[semithick,dashed] (2.75,0.5) arc (0:90:0.25);
\draw[semithick,dashed] (2,0.75) -- (2.5,0.75);

\draw (2,-0.75) -- (2.5,-0.75);
\draw (2,-0.25) -- (2.5,-0.25);
\draw (2.5,-0.25) arc (90:0:0.25);
\draw (2.75,-0.5) arc (0:-90:0.25);

\draw (2,1.25) -- (3,1.25);
\draw (3,1.25) arc (90:0:0.5);
\draw (3.5,0.75) arc (-180:-90:0.5);
\draw (2,-1.25) -- (3,-1.25);

\draw (3,-1.25) arc (-90:0:0.5);
\draw (3.5,-0.75) arc (180:90:0.5);
\draw (4,0.25) -- (4.5,0.25);
\draw (4,-0.25) -- (4.5,-0.25);

\draw (7,0) node {$\mathrm{Cpt}(\partial \Delta_{\mathcal{M}}) =$};

\draw[decoration = {calligraphic brace, amplitude=5pt}, decorate] (8.5,-1) -- (8.5,1);
\draw[decoration = {calligraphic brace, mirror, amplitude=5pt}, decorate] (17,-1) -- (17,1);

\draw[semithick,dashed] (9,0.25) -- (9.5,0.25);
\draw[semithick,dashed] (9.5,0.25) arc (-90:0:0.25);
\draw[semithick,dashed] (9.75,0.5) arc (0:90:0.25);
\draw[semithick,dashed] (9,0.75) -- (9.5,0.75);
\draw (10,0) node {;};

\draw (10.25,-0.75) -- (10.75,-0.75);
\draw (10.25,-0.25) -- (10.75,-0.25);
\draw (10.75,-0.25) arc (90:0:0.25);
\draw (11,-0.5) arc (0:-90:0.25);
\draw (11.25,0) node {;};

\draw (11.5,1.25) -- (12,1.25);
\draw (12,1.25) arc (90:0:0.5);
\draw (12.5,0.75) arc (-180:-90:0.5);
\draw (13,0.25) -- (13.5,0.25);
\draw (13.75,0) node {;};

\draw (14,-1.25) -- (15,-1.25);
\draw (15,-1.25) arc (-90:0:0.5);
\draw (15.5,-0.75) arc (180:90:0.5);
\draw (16,-0.25) -- (16.5,-0.25);

\end{tikzpicture}
\end{center}
\caption{Boundary components $\mathrm{Cpt}(\partial \Delta_{\mathcal{M}})$ for a moduli space $\mathcal{M}$\label{fig: rrr}.}
\end{figure}

    An example is shown in Figure \ref{fig: rrr}.
    \begin{rmk}
        Note that even though as a Riemann surface the domain can vary over the moduli space, the isomorphism class of $\cL$-labelled boundary data does not.
    \end{rmk}
    \begin{rmk}
        Each $\Delta_\cM$ lies in $\cD_{ij}$, where $i$ is the number of inputs of the domain of $\cM$ and $j$ is the number of outputs.
    \end{rmk}
    \begin{rmk}

        For a simple Floer gluing $\cM \times \cN \to \cP$, the output of (WLOG) $\cM$ is glued to the $l^{th}$ input of $\cN$ (with respect to the anticlockwise ordering of the inputs) for some $l$. We say $l$ is the \emph{gluing number} of this Floer gluing.
        
    \end{rmk}
    We define the set of \emph{Floer generators} $\cX(\cL)$ to be the disjoint union $\sqcup_{L,K \in \cL} \cX(L,K)$.
    \begin{defn}
        There are maps $s_l: \cD_{ij} \to \cX(\cL)$  where $s_l$ sends $\Delta$ to its $l^{\textrm{th}}$ input. If $j=1$, there is a similar map $t: \cD_{i1} \to \cX(\cL)$ sending $\Delta$ to its output.

        For $1 \leq l \leq i$ and $i+i'-1 \leq 3$, there are maps of sets:
        \begin{equation}
            \lambda_l: \cD_{i1} \prescript{}{t}\times_{s_l} \cD_{i'j} \to \cD_{i+i'-1,j}
        \end{equation}
        where we take the fibre product over $\cX(\cL)$. This is defined by gluing two labelled discs $\Delta,\Delta'$ together, by gluing the output of $\Delta$ to the $l^{\mathrm{th}}$ input of $\Delta'$. These are suitably associative, in the sense that the following diagram commutes:
        \begin{equation}\label{eq: cD assoc}
            \xymatrix{
                \cD_{i1} \prescript{}t\times_{s_l} \cD_{i'1} \prescript{}t\times_{s_l'} \cD_{i'j}
                \ar[r]
                \ar[d]
                &
                \cD_{i1} \prescript{}t\times_{s_l} \cD_{i'+i''-1,j}
                \ar[d]
                \\
                \cD_{i+i'-1,1} \prescript{}t\times_{s_{l'}} \cD_{i''j}
                \ar[r]
                &
                \cD_{i+i'+i''-2,j}
            }
        \end{equation}
        
        We call pairs $(\Delta, \Delta')$ which we can apply $\lambda_l$ to (i.e. they lie in $\cD_{i1} \prescript{}t\times_{s_l} \cD_{i'j}$, and $i+i'-1 \leq 3$) \emph{$l$-composable}.
    \end{defn}
    An example is shown in Figure \ref{fig: sss}.

\begin{figure}[ht]
\begin{center}
\begin{tikzpicture}

\draw (1.75,0) node {$y$};
\draw (1.75,1) node {$x$};
\draw (1.75,-1) node {$w$};
\draw (4.75,0) node {$r$};
\draw (2.75,1.5) node {$L_0$};
\draw (2.75,-1.5) node {$L_3$};
\draw (3,0.5) node {$\small{L_1}$};
\draw (3,-0.5) node {$\small{L_2}$};

\draw (2,0.25) -- (2.5,0.25);
\draw (2,-0.25) -- (2.5,-0.25);
\draw (2.5,0.25) arc (-90:0:0.25);
\draw (2.75,0.5) arc (0:90:0.25);
\draw (2,0.75) -- (2.5,0.75);
\draw (2,-0.75) -- (2.5,-0.75);

\draw (2.5,-0.25) arc (90:0:0.25);
\draw (2.75,-0.5) arc (0:-90:0.25);
\draw (2,1.25) -- (3,1.25);
\draw (3,1.25) arc (90:0:0.5);
\draw (3.5,0.75) arc (-180:-90:0.5);
\draw (2,-1.25) -- (3,-1.25);
\draw (3,-1.25) arc (-90:0:0.5);
\draw (3.5,-0.75) arc (180:90:0.5);
\draw (4,0.25) -- (4.5,0.25);
\draw (4,-0.25) -- (4.5,-0.25);

\draw (1,0) node {$\stackrel{\lambda_1}{\longmapsto}$};

\draw [semithick] (-6,-1.25) to [round left paren ] (-6,1.25);
    \draw [semithick] (-0.5,-1.25) to [round right paren] (-0.5,1.25);

\draw (-4.25,0.75) node {$L_0$};
\draw (-4.25,-0.25) node {$L_2$};
\draw (-5,0.5) node {$L_1$};

\draw (-5.75,0.75) node {$x$};
\draw (-5.75,-0.25) node {$y$};
\draw (-3.75,0.25) node {$z$};

\draw (-4,0) -- (-4.5,0);
\draw (-4,0.5) -- (-4.5,0.5);
\draw (-4.5,0) arc (90:180:0.25);
\draw (-4.75,-0.25) arc (0:-90:0.25);
\draw (-5,-0.5) -- (-5.5,-0.5);
\draw (-4.5,0.5) arc (-90:-180:0.25);
\draw (-4.75,0.75) arc (0:90:0.25);
\draw (-5,1) -- (-5.5,1);
\draw (-5.5,0) arc (-90:90:0.25);

\draw (-3.5,0) node {;};

\draw (-1,0) -- (-1.5,0);
\draw (-1,0.5) -- (-1.5,0.5);
\draw (-1.5,0) arc (90:180:0.25);
\draw (-1.75,-0.25) arc (0:-90:0.25);
\draw (-2,-0.5) -- (-2.5,-0.5);
\draw (-1.5,0.5) arc (-90:-180:0.25);
\draw (-1.75,0.75) arc (0:90:0.25);
\draw (-2,1) -- (-2.5,1);
\draw (-2.5,0) arc (-90:90:0.25);

\draw (-2.75,0.75) node {$z$};
\draw (-2.75,-0.25) node {$w$};
\draw (-0.75,0.25) node {$r$};
\draw (-1.25,0.75) node {$L_0$};
\draw (-1.25,-0.25) node {$L_3$};
\draw (-2,0.5) node {$L_2$};

\end{tikzpicture}
\end{center}
\caption{An example of $\lambda_1: \mathcal{D}_{2,1} \times \mathcal{D}_{2,1} \to \mathcal{D}_{3,1}$. By convention, outputs are on the right.\label{fig: sss}}
\end{figure}
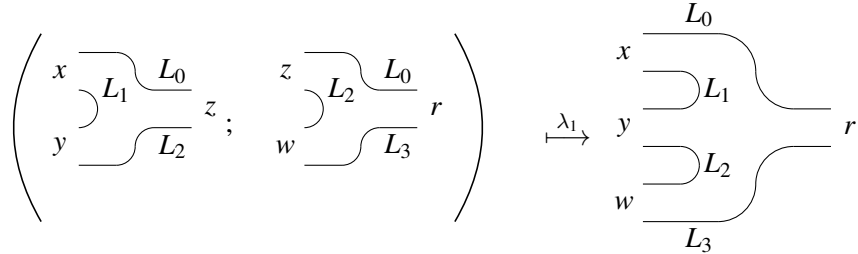

    \begin{rmk}
        Let $\cM \times \cN \to \cP$ be a simple Floer gluing of gluing number $l$. Then $\Delta_\cM$ and $\Delta_\cN$ are $l$-composable, and $\Delta_\cP = \lambda_l(\Delta_\cM, \Delta_\cN)$.

        Let $\cP \to \cQ$ be an unbroken Floer gluing. Then $\Delta_\cP = \Delta_\cQ$.
    \end{rmk}
    \begin{defn}\label{def: boundary}
        Let $(\Delta, \Delta')$ be $l$-composable. We define $\pi=\pi_{\Delta\Delta';l}: \Cptd\Delta \sqcup \Cptd\Delta' \to \Cptd \lambda_l(\Delta, \Delta')$ to send a boundary component to the component it is sent to in the glued disc.

        From the definition, we see that $\pi$ is surjective, and the fibres of $b$ under $\pi$ are a single point unless $b$ is in the image of the boundary component(s) adjacent to the output of $\Delta$. 
        
        When $\cM \times \cN \to \cP$ is a simple Floer concatenation of gluing number $l$, we sometimes write $\pi_{\cM\cN\cP}$ for $\pi_{\Delta_\cM \Delta_\cN;l}$. 
        
        Similarly when $\cP \to \cQ$ is an unbroken Floer face, there is a natural bijection $\Cptd\Delta_\cP \to \Cptd\Delta_\cM$, since $\cP$ and $\cQ$ are defined using the same Lagrangian boundary conditions and asymptotic conditions at the punctures (but not necessarily the same PDE).

        Let $\cF = \prod_l \cA_l \to \cM$ be any Floer gluing. Since it can be factored as a sequence of unbroken and simple Floer gluings (cf. Remark \ref{rem: Floer glue bloc}), we by composing the maps $\pi$, we obtain maps $\pi=\pi_{\cF\cM}:\Cptd\Delta_\cF \to \Cptd\Delta_\cM$; this is well-defined.
    \end{defn}
    The $\pi$ are suitably associative, in the sense that when $\cF_l = \prod_k \cA_{kl} \to \cG_l$, $\prod_l\cG_l \to \cM$ and $\prod_l \cF_l \to \cM$ are Floer gluings, the following diagram commutes:
    \begin{equation}
        \xymatrix{
            \bigsqcup\limits_l \Cptd\Delta_{\cF_l} 
            \ar[r] 
            \ar[dr]
            &
            \bigsqcup\limits_l \Cptd\Delta_{\cG_l} 
            \ar[d]
            \\
            &
            \Cptd\Delta_\cM
        }
    \end{equation}

    For each $L \in \cL$, choose a spider $Y_L \subseteq L$ as in Section \ref{sec:tangerine} through basepoints $\ell_L \in L$. We may assume $Y_L$ passes through (the image in $L$ of) all Floer generators $\cX(L, K)$, $\cX(K, L)$ for all $K$ (note this is a finite subset of $L$). Choose homotopy inverses $\theta_L: L/{Y_L} \to L$ to the collapse maps.

    Assuming all $L \in \cL$ are equipped with Riemannian metrics, there are then maps $ev=ev^\cM$ from each Floer moduli space $\cM$ to the (Moore) based loop spaces of each $L_b/Y_{L_b}$, where $L_b$ are the Lagrangians appearing in the boundary conditions defining $\cM$: these send $u: D \to X$ to the restriction of $u$ to the boundary componend $b$, parametrised anticlockwise. Composing with $\theta_{L_b}$ and reparametrising so each path has speed 1, we obtain maps:
    \begin{equation}\label{eq: ev}
        ev=ev^\cM: \cM \to \prod\limits_{b \in \Cptd\Delta_\cM} \Omega L_b
    \end{equation}
    which are compatible with Floer gluing and concatenation of loops in the natural way.
\subsection{Boundary stabilisation data}
    \begin{defn}\label{def: stab data}
        Let $\cL$ be a choice of Lagrangian data. \emph{Boundary stabilisation data} $S$ for $\cL$ consists of:
        \begin{itemize}
            \item For each Floer moduli space $\cM$ and each $b \in \partial \Delta_\cM$, an element $n_b=n_{b,\cM}^S \in \cI$.

            \item For each Floer gluing $\cF=\prod_l\cA_l \to \cM$, injections of finite sets:
            \begin{equation}\label{eq: n stab}
                \bigsqcup\limits_{b' \in \pi_{\cF\cM}^{-1}\{b\}} [n_{b'}] \to [n_b]
            \end{equation}
        \end{itemize}
        We require that the maps (\ref{eq: n stab}) are suitably associative, meaning whenever $\cF_l =\prod_k \cA_{kl} \to \cG_l$, $\prod_l \cG_l \to \cM$ and $\prod_l \cF_l \to \cM$ are Floer gluings and $b \in \Cptd\Delta_\cM$, the following diagram commutes:
        \begin{equation}\label{eq: n stab ass}
            \xymatrix{
                \bigsqcup\limits_{b'' \in \pi_{\prod_l \cF_l; \cM}^{-1}\{b\}} [n_{b''}]
                \ar[r]_\pi 
                \ar[dr]_-\pi 
                &
                \bigsqcup\limits_{b' \in \pi_{\prod_l \cG_l; \cM}^{-1}\{b\}} [n_{b'}]
                \ar[d]_\pi 
                \\
                &
                [n_b]
            }
        \end{equation}
        A \emph{morphism} of boundary stabilisation data $S \to S'$ consists of injections of finite sets $[n^S_b] \to [n^{S'}_b]$ for all Floer moduli spaces $\cM$ and all $b \in \Cptd\Delta_\cM$, which are suitably compatible with the maps (\ref{eq: n stab})
    \end{defn}
    \begin{lem}\label{lem: stab data uniq conc}
        Let $S,S'$ be two choices of boundary stabilisation data. 
        
        Then there exists a choice $T$ of boundary stabilisation data, such that there are morphisms $S, S' \to T$.
    \end{lem}
    \begin{proof}
        Set $n^T_b = n^S_b + n^{S'}_b$ for all $b$. This induces suitable maps (\ref{eq: n stab}) in a natural way.
    \end{proof}
    
    \begin{lem}\label{lem: ex big S}
        Let $m$ be an integer. Then there exists a choice of boundary stabilisation data $S$ with $n^S_b \geq m$ for all $b$.
    \end{lem}
    \begin{proof}
        We choose all the $n_b$ for $b \in \Cptd\Delta_\cM$, inductively on the dimension of the Floer moduli space $\cM$. Consider the following collection of data:
        \begin{itemize}
            \item Objects $n_b \in \cI$ for all $b \in \Cptd\Delta_\cM$, whenever $\cM$ is a Floer moduli space with $\dim(\cM) \leq k$.
            \item Injections (\ref{eq: n stab}) whenever $\dim(\cP) \leq k$, such that (\ref{eq: n stab ass}) commutes whenever $\dim(\cP) \leq k$.
        \end{itemize}
        For $k=0$, it is clear this exists, by setting $n_b=m$ for all relevant $b$; note there are not yet any conditions of the form (\ref{eq: n stab ass}).

        Now assume we have this data for some $k$. We choose similar data extending this to $k+1$. We define finite sets for all $b \in \Cptd\Delta_\cP$ for Floer moduli spaces with $\dim(\cP) = k+1$:
        \begin{equation}
            T_b := \left(\bigsqcup\limits_{\substack{\cF=\prod_l\cA_l \to \cM \,\textrm{Floer gluing}
            \\
            b' \in \pi_{\cF\cM}^{-1}\{b\}}} [n_b] \right)/\sim
        \end{equation}
        where $\sim$ is the minimal equivalence relation defined such that (\ref{eq: n stab ass}) always commutes for this $\cP$, where each $[n_{b'}] \to T_b$ is the natural inclusion for all $b' \in \pi^{-1}\{b\}$.
        
        Set $n_b = \max\{\#T_b,m\}$. Choose injections $T_b \to [n_b]$; composing with the natural maps $[n_{b'}] \to T_b$ defines the maps required. (\ref{eq: n stab ass}) then commutes whenever it is defined by construction. The fact that each map $[n_{b'}] \to T_b$ is injective follows from the imposed commutativity of (\ref{eq: n stab ass}).

        We therefore obtain boundary stabilisation data $S$ for $\cL$ by induction.

        An alternative more geometric argument is to choose a large finite subset $A \subseteq X$. Then for $b \in \Cptd\Delta_\cM$, let $A_b$ be the set of $x \in A$ such that there is some $u \in \cM$ such that the restriction of $u$ to $b$ hits $x$ under the evaluation map. Let $n_b = \#A_b$ and choose bijections $A_b \cong [n_b]$ arbitrarily. The maps $\pi$, under this bijection, send $x \in A$ to $x$. For $A$ chosen generically, this is always injective.
    \end{proof}

\section{The Fukaya category with monodromy local systems}\label{sec: mon loco sys}

    For the entirety of this section, we fix a commutative $\cI$-monoidal tangential pair (cf. Definition \ref{def: comm tang pair}) $\Psi = (\Theta, \Phi)$, and let $F = \hofib(\Theta \to \Phi)$. Fix a finite CW complex $L$.

    Let $E_\Psi$ be the commutative Thom $\cI$-monoid constructed from $\Psi$ (cf. Definition \ref{def: E Psi}), and $R_\Psi$ the commutative ring spectrum constructed from $E_\Psi$ (cf. Remark \ref{rmk: comm thom ring spec}). Let $GL_1(R_\Psi)$ be its commutative $\cI$-space of units (cf. Definition \ref{def: GL one}); for brevity, we will abbreviate this to $GL_1^\Psi$ throughout much of this section. We write $B(GL_1)_{h\cI}=BGL_1(R_\Psi)_{h\cI}$ for space given by the bar construction applied to the monoid $GL_1(R_\Psi)_{h\cI}$.

    Note that since $E_\Psi$ is connective, so are both $R_\Psi$ and $B(GL_1)_{h\cI}$.

    \begin{lem}\label{lem: pi zero R}
        $R_\Psi$ is a connective ring spectrum. Furthermore:
        \begin{equation}
            \pi_0 R_\Psi \cong \begin{cases}
                \bZ & \textrm{ if $\Psi$ is oriented} \\
                \bZ/2 & \textrm{ otherwise.}
            \end{cases}
        \end{equation}
    \end{lem}
    \begin{proof}
        For the first statement, any Thom spectrum is connective. $F$ is simply connected by assumption so $\Omega F \simeq \Base(E_\Psi)$ is connected. By the stable Hurewicz theorem, $\pi_0 R \cong H_0(R)$. By the Thom isomorphism theorem, $H_0(R) \cong \bZ$ if the index bundle over $\Base(E_\Psi)$ is orientable, and $H_0(R) \cong \bZ/2$ otherwise. 

        By Hurewicz and the fact that $\pi_2 U/O \cong \bZ/2$, the class $[w_2 \circ Re] \in H^2(\widetilde{U/O}; \bZ/2)$ detects $\pi_2$. Therefore it follows from \cite[Theorem 1.1]{Georgieva} (cf. also \cite[Section 3]{deSilva}) that the index bundle is orientable if and only if $F \to \widetilde{U/O}$ is 0 on $\pi_2$.
    \end{proof}

\subsection{Monodromy local systems}

    \begin{defn}
        A \emph{($\Psi$-)(monodromy) local system on $L$ (MLS)} consists of a map of spaces $\xi: L \to B(GL_1)_{h\cI}$.

        A \emph{homotopy} between local systems $\xi$ and $\zeta$ consists of a homotopy between the corresponding maps.
    \end{defn}
    \begin{ex}
        The \emph{trivial local system} $\xi: L \to B(GL_1)_{h\cI}$ is the constant map to the basepoint.
    \end{ex}

    \begin{lem} \label{lem:MLS to classical local system} 
        If $\Psi$ is oriented, there is a function from the set of $\Psi$-monodromy local systems on $L$ to the set of $\bZ$-local systems on $L$.
    \end{lem}

    \begin{proof}
        By Lemma \ref{lem: pi zero R}, there is a map of ring spectra $R \to H\bZ$, hence we obtain a map $BGL_1(R_\Psi) \to BGL_1(H\bZ) \simeq K(\bZ/2, 1)$; postcomposition with this gives the required map.
    \end{proof}
\subsection{Twisting data}
    We fix a set $\cL$ of Lagrangian data as in Section \ref{sec:unoriented category}.
    \begin{defn}\label{def: 95}
        Let $S$ be boundary stabilisation data for $\cL$. \emph{Twisting data} $\Xi$ for $\cL$ (with respect to $S$) consists of maps of spaces:
        \begin{equation}\label{eq: 92}
            \Xi_\cM: \cM \to \prod\limits_{b \in \Cptd\Delta_\cM} GL_1^\Psi(n_b^S)
        \end{equation}
        for each Floer moduli space $\cM$. We require that the following diagram commutes for each Floer gluing map $\prod_l \cA_l \to \cM$:
        \begin{equation}\label{eq: pre tw comp}
            \xymatrix{
                \prod_l \cA_l
                \ar[r]
                \ar[d]
                &
                \prod\limits_l \prod\limits_{b \in \Cptd\Delta_{\cA_l}} GL_1^\Psi(n^S_b)
                \ar[d]
                \\
                \cM
                \ar[r]
                &
                \prod\limits_{b'' \in \Cptd\Delta_\cM} GL_1^\Psi(n_{b''}^S)
            }
        \end{equation}
        Here the right-hand vertical arrow is given by the product induced by (\ref{eq: n stab}).

        A \emph{homotopy} between twisting data $\Xi$ and $\Xi'$ (both with respect to the same boundary stabilisation data $S$) consists of a 1-parameter family of twisting data $\{\Xi_t\}_{t \in [0,1]}$ interpolating between $\Xi$ and $\Xi'$. This can be encoded equivalently as maps $\cM \times [0,1] \to \prod\limits_{b \in \Cptd\Delta_\cM} GL_1^\Psi(n_b^S)$ satisfying a similar associativity condition.

    \end{defn}
    \begin{defn}
        Let $S \to T$ be a morphism of boundary stabilisation data, and $\Xi$ twisting data for $\cL$ with respect to $S$. The \emph{induced} twisting data for $\cL$ with respect to $T$ is given by composing each $\Xi_\cM$ with (products of) the maps $GL_1^\Psi(n_b^S) \to GL_1^\Psi(n_b^T)$ induced by the maps $[n_b^S] \to [n_b^T]$ in $\cI$.

        A \emph{concordance} between twisting data $\Xi$ and $\Xi'$ (with respect to boundary stabilisation data $S$ and $S'$ respectively) consists of morphisms of boundary stabilisation data $S,S' \to T$ for some boundary stabilisation data $T$, along with a homotopy between the two induced sets of twisting data with respect to $T$. 

    \end{defn}
    We now incorporate $\Psi$-monodromy local systems. 
    \begin{defn}
        Let $\cL$ be a choice of Lagrangian data. A choice of \emph{local system data} $\xi_\cL$ on $\cL$ consists of a $\Psi$-monodromy local system $\xi_L: L \to BGL_1(R_\Psi)_{h\cI}$ for each $L \in \cL$.
    \end{defn}
    \begin{prop}
        Let $\xi_\cL$ be a choice of local system data on a set of Lagrangian data $\cL$, and $S$ sufficiently large boundary stabilisation data. Then associated to $\xi_\cL$ there is a preferred homotopy class of twisting data for $\cL$.
    \end{prop}
    
    \begin{proof}
        Assume the boundary stabilisation data $S$ is large enough such that for each Floer moduli space $\cM$, the map induced by inclusions of 0-simplices
        \begin{equation}
            \prod\limits_{b \in \Cptd\Delta_\cM} GL_1^\Psi(n_b) \to \prod\limits_{b \in \Cptd\Delta_\cM} (GL_1^\Psi)_{h\cI} = \left(GL_1^\Psi\right)_{h\cI}^{\Cptd\Delta_\cM}
        \end{equation}
        is $(\dim(\cM)+2)$-connected.

        Consider the following lifting problem for each Floer moduli space $\cM$:
        \begin{equation}\label{eq: Xi lift prob}
            \xymatrix{
                &&
                \prod\limits_{b \in \Cptd\Delta_\cM} GL_1^\Psi(n^S_b)
                \ar[d]_{\iota_0}
                \\
                &&
                \left(GL_1^\Psi\right)_{h\cI}^{\Cptd\Delta_\cM} 
                \ar@{~>}[d]_{\widetilde\iota_1}
                \\
                \cM
                \ar[r]_{ev}
                \ar@{-->}[uurr]_{\exists ?}
                &
                \prod\limits_{b \in \Cptd\Delta_\cM} \Omega L
                \ar[r]_{\Omega \xi}
                &
                \left(\Omega B(GL_1^\Psi)_{h\cI}\right)^{\Cptd\Delta_\cM} 
            }
        \end{equation}
        
        The upper vertical arrow is highly-connected by the choice of $S$. $\widetilde\iota_1$ is the zig-zag from Lemma \ref{lem: zig zag}; recall each map in the zig-zag is a monoid map which is a homotopy equivalence of spaces. The map $ev$ is from (\ref{eq: ev}).

        Therefore for each Floer moduli space $\cM$ there exists a homotopy lift: this is a diagonal map as in the diagram, along with a homotopy between the two ways around the diagram. Similarly, if such a homotopy lift is already chosen over the subspace given by the boundary $\partial \cM$, we may choose a homotopy lift over all of $\cM$ to extend this one.

        Using this, by inducting over  $\dim(\cM)$ (which we can do, since $\sum_l \dim(\cA_l) < \dim(\cM)$ whenever $\prod_l \cA_l \to \cM$ is a Floer gluing), we may choose homotopy lifts as above so that the resulting maps $\cM \to \Pi_{b \in \Cptd\Delta_\cM} GL_1^\Psi(n^S_b)$ are suitably associative, and so assemble into a set of twisting data with respect to $S$.

        The same argument shows that for any two such sets of twisting data $\Xi, \Xi'$ constructed in this way, there is a homotopy between them.
    \end{proof}

\subsection{Transverse sections}
    
    Let $(X, E)$ be a Thom space, and $M$ a manifold. Assume the rank of $E$ is at least 5. Let $f: M \to \Thom(E \to X)$ be a continuous map. 

    Let $M' = f^{-1}\Tot(E \to X)$ and $M_0 = f^{-1} \{0\} \subseteq M'$, both subspaces of $M$. The map $f: M' \to \Tot(E \to X)$ defines a section $s_f$ of the pullback bundle $f^*E \to M'$ (really this is $(\pi \circ f)^* E$, where $\pi: E \to X$ is the projection). Its zero set is exactly $M_0$.

    $\Tot(f^*E \to M')$ does not automatically have a smooth structure. However, its tangent microbundle has a canonical vector bundle lift, given by $f^*E \oplus TM'$, so by smoothing theory (using the rank assumption above), it  can be equipped with a smooth structure canonically up to contractible choice; we choose such a structure implicitly.

    \begin{rmk}
        The smoothing of a topological manifold with a vector bundle lift of its tangent microbundle admits a relative version.  Given a manifold with faces,  flow-category,  etc, one can therefore construct such smoothings compatibly over the strata, cf. the analogous inductive argument given in \cite{Bai-Xu}.
    \end{rmk}

    We say $f$ is \emph{smooth along 0} if $s_f$ is smooth along $M_0$, and if this holds, that $f$ is \emph{transverse along 0} if the vertical derivative $TM|_{M_0} \to f^*E$ is fibrewise surjective. If $f$ is smooth and transverse along 0, $M_0$ is naturally a smooth manifold. The following is standard:
    
    \begin{lem}\label{lem: gen tran}
        Let $f: M \to \Thom(E \to X)$ be as above. Then there is a $C^0$-small perturbation $f'$ of $f$ that is smooth and transverse along 0.

        Furthermore, if $f$ is smooth and transverse over some open subset $U \subseteq M$, we can take $f'$ to agree with $f$ over $U$.

        If $f$ is smooth and transverse along 0, there is a canonical (up to contractible choice) isomorphism of vector bundles over $M_0$:
        \begin{equation}\label{eqn:tangent bundle of transverse zero-set}
            TM_0 \oplus f^*E \cong TM|_{M_0}
        \end{equation}
    \end{lem}   
    If $M$ has boundary (or faces), we additionally require that $f$ is smooth and transverse over each boundary face. In that case, $M_0$ is also a manifold with boundary (or faces, with a system of faces induced by that of $M$). (\ref{eqn:tangent bundle of transverse zero-set}) is compatible with passing to boundary faces in the natural way.

\subsection{Twisted moduli spaces}\label{sec:twisted moduli}
    Assume we are given some twisting data $\Xi$. Recall that by definition, $GL_1(n)$ is a union of components of $\Omega^n \Thom(\bV^\Psi(n) \to \bU^\Psi(n))$. Hence the maps $\Xi_\cM$ give maps, for each Floer moduli space $\cM$:
    \begin{equation}\label{eqn:map defining twisted moduli}
        \hat\Xi_\cM: \cM \times S^{(\sum_b n_b)} \to  \Thom\left(\bigoplus_b\bV^\Psi(n_b) \to \prod_b\bU^\Psi(n_b)\right)
    \end{equation}
    where $b$ runs over $b \in \Cptd\Delta_\cM$.
    By Lemma \ref{lem: gen tran}, we may assume these maps are all smooth and transverse along 0. We define the \emph{twisted Floer moduli spaces} to be the preimage of 0 of each $\hat \Xi_\cM$. We use superscripts $\xi$ to denote the twisted Floer moduli spaces: for an arbitrary Floer moduli space, we write its twisted version $\hat\Xi_\cM^{-1}\{0\}$ as $\cM^\xi$, and for a specific Floer moduli space e.g. $\cM^{LL'}_{xx'}$ associated to Lagrangians $L, L'$ and $x, x' \in \cM^{LL'}$, we write $\cM^{\xi_L, \xi_{L'}}_{xx'}$ for its twisted version. Note that since the rank of each $\bV^\Psi(n_b)$ is $n_b$, this manifold has the same dimension as $\cM$.

    Since the maps $\Xi_\cM$ are compatible with Floer gluing maps, the $\hat \cM$ inherit similar maps $\prod_l \hat \cA_l \to \hat\cM$. In particular, we obtain all the same algebraic structure: for example, given $L, L' \in \cL$ with local systems $\xi_L, \xi_{L'}$, the $\cM^{\xi_L,\xi_{L'}}_{xx'}$ assemble to form a flow category.

\subsection{$E_\Psi$-orientations on twisted moduli spaces}\label{sec: E Psi or tw modu}

We now return to Equation \ref{eqn:map defining twisted moduli} and the twisted moduli space $\cM^{\xi} = \hat\Xi_{\cM}^{-1}(0)$.  The $E_{\Psi}$-orientation on the moduli space $\cM$ is given by maps
\begin{equation} \label{eqn:input twisting}
(\cM,I^{\cM})(\nu_{\cM}) \to E(\nu_{\cM}+d_{\cM}) = (\bU^{\Psi},\bV^\Psi)(\nu_{\cM} + d_{\cM})
\end{equation}
(perhaps with a further shift $[i]$ on the RHS if the moduli space $\cM$ arises from a morphism $(i=1)$, bordism of morphisms or associator $(i=2)$, etc), which induce isomorphisms of pullback bundles.  

Consider Equation \ref{eqn:tangent bundle of transverse zero-set}, with $M_0 = \cM^{\xi} \subset \cM \times S^n = M$, with $n = \sum_{b \in \Cpt(\partial_{\cM})} n_b$, and with $E = \oplus_b \bV^{\Psi}(n_b)$. We see that
\[
T\cM^{\xi} \oplus \oplus_b \bV^{\Psi}(n_b) = T\cM \oplus \bR^n.
\]
Together with \eqref{eqn:input twisting}, we have
\begin{equation}\label{eqn:twisted index before further stabilisation}
I^{\cM^{\xi}}(\nu_{\cM}) \oplus \oplus_b \bV^{\Psi}(n_b) = \bV^{\Psi}(\nu_{\cM} + d_{\cM}) \oplus \bR^n
\end{equation}
Recall the notion of an inverse for a map to an $\cI$-monoid, as in Definition \ref{defn:inverse in  I-monoid}.

\begin{lem}\label{lem:coherent inverses}
    Fix a finite collection of Floer moduli spaces $\{\cM\}$ with stabilisation, boundary stabilisation and index data. Construct twisted moduli spaces $\{\cM^{\xi}\}$ as above. We may choose:
    \begin{enumerate}
        \item $n_{\cM^\xi}$ and $l_{\cM^\xi}$ and injections $[\sum_{b\subset \partial\cM} n_b] \sqcup [n_{\cM^\xi}] \hookrightarrow [l_{\cM^\xi}]$;
        \item maps $\cM^{\xi} \to \bU(n_{\cM^\xi})$ inverse to the maps $\cM^{\xi} \to \bU^{\Psi}(\sum_b n_b)$
    \end{enumerate}
    coherently under breaking.
\end{lem}

\begin{proof}[Sketch]
 For a single moduli space, existence is given by Lemma \ref{lem:inverse maps to I-monoids exist}.   We pick $n_{\cM^\xi}, l_{\cM^\xi}$, and the inverse map, together with associated nullhomotopy of the map $\cM^{\xi} \to \bU^{\Psi}(\sum_n n_b) \times \bU^\Psi(n_{\cM^\xi}) \to \bU^\Psi(l_{\cM^\xi})$, inductively in the dimension of $\cM^{\xi}$ (equivalently the dimension of $\cM$).  Since, by  Lemma \ref{lem:inverse maps to I-monoids exist}, the space of choices is arbitrarily highly connected for large $n_{\cM^\xi}, l_{\cM^\xi}$, these choices can always be extended from boundary strata to higher dimensional moduli spaces in the given finite collection.
\end{proof}

Adding $\bV^\Psi(n_{\cM^{\xi}}) \oplus \bR^{l_{\cM^{\xi}} - \sum_b n_b - n_{\cM^{\xi}}}$ to both sides of \eqref{eqn:twisted index before further stabilisation}, where the first term is pulled back via the inverse map constructed in Lemma \ref{lem:coherent inverses}, we obtain that
\[
I^{\cM^{\xi}}(\nu_{\cM}) \oplus \bR^{l_{\cM^{\xi}}} = \bV^{\Psi}(\nu_{\cM} + d_{\cM}) \oplus \bV^{\Psi}(n_{\cM^{\xi}}) \oplus \bR^{l_{\cM^{\xi}} - n_{\cM^{\xi}}}
\]
where the two $\bV$-terms on the RHS arise respectively from the maps $\cM^{\xi} \to \cM \to \bU^{\Psi}(\nu_{\cM}+d_{\cM})$ and the `inverse map' $\cM^{\xi} \to \bU^\Psi(n_{\cM^{\xi}})$.

Combining this with the maps 
\[
\bV^{\Psi}(\bullet) \oplus \bR^k \to \bV^{\Psi}(\bullet+k)
\]
from the Thom $\cI$-space structure, we find that if we define
\begin{equation}
    \nu_{\cM^{\xi}} = \nu_{\cM} + l_{\cM^{\xi}} \qquad \mathrm{and} \quad d_{\cM^{\xi}} = d_{\cM} + l_{\cM^{\xi}}
\end{equation}
then we obtain valid choices of index and stabilisation data for the twisted moduli space $\cM^{\xi}$. 

The coherence of the classifying maps to $E_\Psi(\nu_{\cM^{\xi}} + d_{\cM^{\xi}})$ under breaking is inherited from (i) the corresponding coherence for the $E_{\Psi}$-orientation on $\cM$, (ii) the compatibility of the maps $\Xi_{\cM}$ with Floer gluings, and (iii) the coherence in the construction of the inverse maps in Lemma \ref{lem:coherent inverses}. This serves to define coherent $E_{\Psi}$-orientations on the twisted moduli spaces $\{\cM^{\xi}\}$.

\subsection{The Fukaya category with local systems}\label{sec: fuk loc sys}

We pick a finite set of Lagrangians $\cL$, and for each one pick some finite collection of $\Psi$-monodromy local systems $\{\xi^{(j)}_L \, | \,  1 \leq j \leq J(L), \, L \in \cL\}$ ($J(L)$ is some number).  We define a spectral (Donaldson-)Fukaya category $\scrF^{loc}(X; \Psi)$ with objects $(L,\xi_L)$, where $\xi_L \to L$ is one of this finite set of distinguished monodromy local systems. We will typically abbreviate the pair ($L,\xi_L)$ to just $\xi_L$, and we call such pairs \emph{$\Psi$-branes}.

We fix a commutative $\cI$-monoidal tangential structure $\Psi$ and $E_\Psi$-orientations on all the twisted moduli spaces, as above. We thus have twists
\[
\cM^{\xi_L,\xi_K}
\]
of $\cM^{LK}$ as in Section \ref{sec:twisted moduli}, which are $E_\Psi$-oriented flow categories.

\begin{defn}
    The morphism group $\scrF^{loc}_i((L,\xi_L), (K,\xi_K); \Psi)$ is the group of bordism classes of right flow modules $\Omega_i^{E_\Psi}(\cM^{\xi_L,\xi_K})$.
\end{defn}

It follows from the discussion in Section \ref{sec:twisted moduli} that all the algebraic structures in Theorems \ref{thmdef:unor fuk}, \ref{thm: unor OC mod} and \ref{thm:ind comm} exist for tuples of pairs $(L, \xi_L)$ rather than just Lagrangians $L$.

Therefore in the same way as in Definition \ref{def: E fuk}, the objects $\xi_L$ and morphism groups above define a category $\scrF^{\loc}(X;\Psi)$.

There are truncations $\tau_{\leq i} \scrF^{loc}(X; \Psi)$, similarly to Definition \ref{def: trunc fuk}.

\begin{defn}\label{def: oc loco}
    Let $(L, \xi_L)$ be an object in $\scrF^{loc}(X; \Psi)$. We define the class $[L, \xi] \in \Omega^{E_\Psi,\cO\cC}_d(L, \phi|_L)$ to be the bordism class of the closed manifold $\Upsilon_{\xi_L} \circ \cE^{\xi_L}$, with its natural $E_\Psi$-orientation relative to $\phi|_L$. 
\end{defn}
\begin{rmk}
    Note that in contrast to Lemma \ref{lem: def oc or}, the definition of $[L,\xi_L]$ is not obviously topological: it uses the Floer moduli spaces. We give a topological description of this class in Section \ref{sec: OC}. 
\end{rmk}
\begin{prop}\label{prop: priojgoierghorhgv}
    Let $(L, \xi_L)$ and $(K, \xi_{K})$ be objects in $\scrF^{loc}(X; \Psi)$.

    If $(L, \xi_L)$ and $(K, \xi_K)$ are isomorphic, then $\iota^L_*[L,\xi_L] = \iota^{K}_*[K,\xi_K]$.
\end{prop}
\begin{proof}
    Similarly to Proposition \ref{prop: quas same fund no loc}, this follows from Theorem \ref{thm: unor OC mod}(4) and (5), twisting the appropriate moduli spaces by the local systems as usual.
\end{proof}

\subsection{Invariance}

We fix Lagrangian and local system data $\cL$, meaning a finite collection of pairs $(L,\xi_L)$ of Lagrangians (with their tangential lifts) equipped with spectral local systems $\xi_L : L \to B(GL_1)_{h\cI}$, as in Section \ref{sec: fuk loc sys}. We always suppose that $\cL \supset \cL^{\triv}$ includes the $\Psi$-branes associated to the trivial local system on each Lagrangian $L$ appearing in some pair in $\cL$.

    \begin{lem}\label{lem:concordance gives isomorphism}
        Let $(L, \xi_L)$ and $(K, \xi_K)$ be two pairs in $\cL$.
        
        Then flow category $\cM^{\xi_L, \xi_K}$ is well-defined up to isomorphism in $\Flow^{\Psi}$: it does not depend on the choice of Floer data, boundar stabilisation data, nor the choice of representative $\Xi$ of the equivalence class of twistings defined from $\cL$. 

        Its isomorphism class is also invariant under replacing the $\Psi$-local systems $\xi_L$ with homotopic ones.
    \end{lem}

    \begin{proof}
        The local systems, along with a choice of boundary stabilisation data $S$, determines a unique homotopy class of transverse twisting data.  A homotopy  of transverse twisting data defines a morphism of the twisted flow categories associated to the transverse end-points, composition with which is an equivalence in $\Flow$ (one can build an inverse in a similar manner). Coherently $\Psi$-orienting the homotopy lifts this equivalence to $\Flow^{\Psi}$.  Changing the boundary stabilisation data up to concordance (so changing the ranks of the twisting data) yields a stabilisation of the associated twisted flow categories in the sense of \cite[Remark 5.8]{PS2}. 
    \end{proof}

\begin{prop}
    The subcategory $\scrF^{\loc;\cL}(X;\Psi)$ associated to the objects $\cL$ is well-defined up to quasi-isomorphism.
\end{prop}

\begin{proof}
    Independence of boundary stabilisation data follows from Lemma \ref{lem:concordance gives isomorphism} (a similar argument applies to bilinear maps). Independence of the generic almost complex structures and perturbations used in the construction follows as in the case without local systems.

    Independence with respect to the choice of spider follows from the same argument as in \cite[Section 6.2]{BDHO}.
\end{proof}

If $\zeta_L \to L$ is a MLS and $\{L_t\}$ is a Hamiltonian isotopy from $L = L_0$ to $L' = L_1$, there is an induced MLS $\zeta_{L'} \to L'$. By expanding if necessary, suppose both $(L,\zeta_L)$ and $(L',\zeta_{L'})$ belong to the fixed Lagrangian data $\cL$. 

\begin{lem}
    The pairs $(L,\zeta_L)$ and $(L',\zeta_{L'})$ are quasi-isomorphic in $\scrF^{\loc;\cL}(X,\Psi)$.
\end{lem}

\begin{proof}
    Follows from a standard continuation map argument, twisted as in the proof of Lemma \ref{lem:concordance gives isomorphism}.
\end{proof}

    \begin{prop}
        The full subcategory
        \[
        \scrF^{\loc,\cL^{triv}}(X;\Psi) \subset \scrF^{\loc,\cL}(X;\Psi)
        \]
        is isomorphic to the category with the same objects as defined in Section \ref{sec: tang fuk}.
    \end{prop}
    \begin{proof}
        All the Floer moduli spaces relevant to $\cL^{\triv}$ carry trivial local systems. We take boundary stabilisation data $S \equiv \{0\}$ on all moduli spaces, so $GL_1(n_b^S) = \{pt\}$ reduces to the base-point. Note that the inductive argument which constructs pretwisting data goes through for this $S$, since $\Omega\xi$ is a constant map. The associated twisting data can then be taken to give maps $\hat\Xi_\cM: \cM \to GL_1(0) = \{pt\}$, so the twisted moduli spaces agree identically with the untwisted ones. The result then follows from independence of the construction on the choice of boundary stabilisation data.
    \end{proof}

\subsection{Relation to ordinary Floer theory}\label{sec: rel ord floer}

We begin with a brief discussion of the appropriate tangential pairs for Liouville domains which may fail to be spin or even orientable.

\begin{lem}\label{lemon 3.8}
    Assume $\Psi$ is an oriented commutative tangential pair. Write $\theta: \Theta \to BO$, $\phi: \Phi \to BS_\pm U$, $f: F \to \widetilde{U/O}$ and $g: \Theta \to \Phi$ be the corresponding maps, and $w_2 \in H^2(BO_{h\cI}; \bZ/2)$ for the universal second Stiefel-Whitney class.

    Then $\theta^*w_2$ lies in the image of $g^*:  H^2(\Phi_{h\cI}; \bZ/2) \to  H^2(\Theta_{h\cI}; \bZ/2)$. Additionally, $g^*$ is injective (so the preimage of $\theta^* w_2$ is unique).
\end{lem}

\begin{defn}\label{defn: univ back clas}
    Let $\Psi = (g: \Theta \to \Phi)$ be an oriented commutative tangential pair. 
    
    We define the \emph{universal background class for $\Psi$} to be the (unique) preimage $\theta^*w_2$ under $g^*$, where $\theta$ is the map $\Theta \to BO$. We write $\omega_2 \in H^2(\Phi_{h\cI}; \bZ/2)$ for this class.
\end{defn}
\begin{rmk}
    Let $\Psi=(\Theta, \Phi)$ be an oriented tangential pair, and $\phi$ a $\Phi$-orientation on $X$. Then Lemma \ref{lemon 3.8} implies any $\Psi$-oriented Lagrangian $L \subseteq X$ is relatively Spin, relative to the background class $\phi^*\omega_2 \in H^2(X; \bZ/2)$.
\end{rmk}
\begin{proof}[{Proof of Lemma \ref{lemon 3.8}}]
    Throughout this proof, we write $H^*(\cdot)$ for $H^*(\cdot; \bZ/2)$, and for an $\cI$-space $X$ (by abuse of notation) we write $X$ for $X_{h\cI}$.

    For any fibration sequence $G \to E \to B$ with simply-connected fibre $G$, an application of the Serre spectral sequence shows that there is a short exact sequence:
    \begin{equation}
        0 \rightarrow H^2(B) \to H^2(E) \to \ker\left(d_3: H^0(B; H^2(G)) \to H^3(B)\right) \to 0
    \end{equation}  
    where $d_3$ is the differential on the $E_3$-page. In particular this implies $g^*$ is injective. Note this exact sequence is functorial.

    Applying this to the homotopy fibration sequences $\widetilde{U/O} \to BO \to BS_\pm U$ and $F \to \Theta \to \Phi$, we obtain a commutative diagram with exact rows:
    \begin{equation}
        \xymatrix{
            0
            \ar[r]
            &
            H^2(BS_\pm U)
            \ar[r]
            \ar[d]_{\phi^*}
            &
            H^2(BO)
            \ar[r]
            \ar[d]_{\theta^*}
            &
            \ker d_3 
            \ar[r]
            \ar[d]_p
            &
            0
            \\
            0
            \ar[r]
            &
            H^2(\Phi) 
            \ar[r]
            &
            H^2(\Theta)
            \ar[r]
            &
            \ker d'_3
            \ar[r]
            &
            0
        }
    \end{equation}
    where $d_3$ and $d'_3$ are the differential pages on the two Serre spectral sequences respectively. We next argue that the map $p: \ker d_3 \to \ker d'_3$ vanishes; the lemma then follows by a diagram chase.

    Since $\widetilde{U/O}$ is simply connected, the Hurewicz map $\bZ/2 \cong \pi_2 \widetilde{U/O} \to H_2(\widetilde{U/O})$ is an isomorphism. $\bZ/2$ has no non-trivial automorphisms so $\pi_1 BS_\pm U$ must act trivially on it, and so $H^0(BS_\pm U; H^2(\widetilde{U/O})) \cong \bZ/2$. It is standard that $H^2(BO) \cong (\bZ/2)^{\oplus 2}$ and $H^2(BS_\pm U) \cong \bZ/2$, so by a dimension count we find that $d_3: \bZ/2 \to \ldots$ must also vanish.

    Since $\Psi$ is oriented, $f_*: \pi_2 F \to \pi_2 \widetilde{U/O}$ vanishes; since $F$ and $\widetilde{U/O}$ are simply connected, $f_*$ must vanish on $H_2(\cdot; \bZ)$ and hence $f^*$ vanishes on $H^2$. Since $p = H^2(\phi^*; f^*): H^0(BS_\pm U; H^2(\widetilde{U/O})) \to H^0(\Phi; H^2(F))$, $p$ must vanish.
\end{proof}

\begin{ex}
     The map $w_2: BSO \to K(\bZ/2,2)$ associated to the second Stiefel-Whitney class is realised by an infinite loop map. (This is because $\pi_2(BSO) = \bZ/2$ is the lowest non-trivial homotopy group, the map is associated to the Postnikov truncation $bso \to \tau_{\leq 2} bso$ in spectra, which is always infinite loop.) There is therefore a commutative tangential pair $(\Theta \to \Phi) = (hofib(BSO \to K(\bZ/2,2)) \to BS_{\pm U})$. In this case the universal class $w_2$ vanishes, and $\Theta$-oriented Lagrangians are graded and $Spin$, recovering a `classical' setting. 
\end{ex}

\begin{rmk}
    Oscar Randal-Williams pointed out to us that, by contrast, $w_2: BO \to K(\bZ/2,2)$ does not deloop. Therefore $w_1^2+w_2$ also does not deloop on $BO$ (since the infinite loop automorphism $-1$ of $BO$ exchanges these two classes).
\end{rmk}

Recall that the $\Phi$-orientation on $X$ means it is also equipped with a grading. Let $\scrF^{loc}(X;\bZ)$ denote the $\bZ$-graded compact Fukaya category whose objects are (exact, graded so Maslov index zero) Lagrangian branes $(L,\xi_\bZ)$ equipped with rank one $\bZ$-local systems $\xi_{\bZ}: \pi_1(L)\to H_1(L;\bZ) \to \bZ/2$.

For $w \in H^2(X;\bZ/2)$ there is a $\bZ$-coefficient $\bZ$-graded Fukaya category $\scrF(X,w;\bZ)$ whose objects are graded exact Lagrangian submanifolds which are relatively spin for the background class $w$; choosing $w$ amounts to fixing a particular coherent orientation scheme for the Fukaya category, see \cite[Section 8]{Seidel:book}. Similarly, we write $\scrF^{loc,\cL}(X,w;\bZ)$ for the $w$-sign-twisted Fukaya category with objects Lagrangians drawn from $\cL$, perhaps equipped with $\bZ$-local systems.

\begin{prop}\label{prop:spectral to ordinary}
    Let $\Psi$ be an oriented (graded) tangential pair. Fix a set of $\Psi$-oriented Lagrangian data $\cL$. Then there  is a fully faithful functor
    \[
    \tau_{\leq 0} \scrF^{loc,\cL}(X;\Psi) \to \scrF^{loc,\cL}(X,\phi^*w_2;\bZ).
    \]
\end{prop}

\begin{proof}
By definition, the morphism group $\scrF_i^{loc}(\xi_L,\xi_K;\Psi) = [\ast[i],\cM^{\xi_L,\xi_K}]^{\Flow_{E_{\Psi}}}$. As observed above, any $\Theta$-oriented Lagrangian is $\phi^*w_2$-relatively spin, so defines an object in $\scrF^{loc,\cL}(X,\phi^*w_2;\bZ)$; Lemma \ref{lem:MLS to classical local system} allows one to extend this association to Lagrangians equipped with local systems. Lemma \ref{lem:truncation to morse complexes} then says that it suffices to compare the signs associated to $\Psi$-oriented zero-manifolds and those arising in classical Floer theory. In both cases, these are determined by the fact that moduli spaces of Floer strips are canonically oriented relative to the orientation lines of the $\overline{\partial}$-operators associated to the puncture data at the ends.

Recall that $H^1(\mathcal{L} U/O;\bZ/2) = \bZ/2 \oplus \bZ/2$ with the generators being the mod $2$ reduction of the Maslov class and the pullback of $w_2$ under the map $S^1 \times \mathcal{L}(U/O) \to U/O \to BO$. In particular, on loops of Maslov index zero (or any fixed Maslov index) the isomorphism class of the determinant line of the $\overline{\partial}$-operator associated to a loop of Lagrangian subspaces is determined by $w_2$, cf \cite[Lemmas 11.7 \& 11.17]{Seidel:book} (note that the group $\mathrm{Pin}_n$ in \emph{op.cit.} is the one, sometimes called $\mathrm{Pin}^+$, which is associated to $w_2 \in H^2(BO_n;\bZ/2)$).
\end{proof}

   The image of the functor is the set of those $\bZ$-local systems on Lagrangians in $\cL$ lying in the image of the map from Lemma \ref{lem:MLS to classical local system}. 

\section{Open-closed map and local systems}\label{sec: OC}

    In this section, we give a topological description of the classes $[L, \xi] \in \Omega^{E_\Psi,\cO\cC}_d(L, \phi|_L)$ from Section \ref{sec: fuk loc sys}.

    Fix some commutative tangential pair $\Psi$, and let $E_\Psi$, $R = \Thom(E_\Psi)$ and $GL_1^\Psi = GL_1(R)$ be as in Section \ref{sec: mon loco sys}.
    \begin{thm}\label{thm: tech OC main}
        Let $(L, \xi)$ be an object of $\scrF(X; \Psi)$: so $L$ is a closed exact $\Psi$-oriented Lagrangian in $X$, and $\xi$ a $\Psi$-local system on $L$. Let $[L, \xi] \in \Omega^{E_\Psi,\cO\cC}_d(L, \phi|_L)$ be the class defined in Definition \ref{def: oc loco}, and $[L] \in \Omega^{E_\Psi,\cO\cC}_d(L, \phi|_L)$ the fundamental class of $L$. Then:
        \begin{equation}
            [L,\xi] = [L] \cap [\eta\xi] \in \Omega_d^{E_\Psi, \cO\cC}(L, \phi_L)
        \end{equation}
        where $\cap$ is the module action from Lemma \ref{lem:module structure for OC bordism}, and $[\eta\xi]$ is the degree 0 $R$-cohomology class given by the following composition:
        \begin{equation}\label{eq: owsetrthdhdrerjgej}
            L \xrightarrow{\xi} B(GL_1^\Psi)_{h\cI} \xrightarrow{\eta_*} (GL_1^\Psi)_{h\cI} \subseteq \Omega^\infty R 
        \end{equation}
        Here $\eta_*$ is the action of the Hopf map $S^3 \to S^2$ on the infinite loop space $GL_1^\Psi$ as in Section \ref{sec: de loop}.
    \end{thm}
    Theorem \ref{thm:main2} is obtained by pushing forwards along the inclusion $L \to X$.
\subsection{Overview}
    We prove Theorem \ref{thm: tech OC main} in several steps.
    \begin{enumerate}
        \item We interpret the class $[L, \xi]$ as the twist of a single manifold in Section \ref{sec: mode twis}, by a map which naturally lands in a cyclic bar-type construction applied to $GL_1^\Psi$ (which we describe in detail in Section \ref{sec: cyc tw}).
        \item In Section \ref{sec: incl cons loop}, we factor this ``cyclic twist map'' through the constant loops, roughly using the fact that for appropriate Floer data, the relevant holomorphic curves approximate Morse trajectories. The free loops we see then look like loops which follow a gradient flow followed by its reverse; we may then apply Lemma \ref{lem: eoghrtoughor}.
        \item In Section \ref{sec: oc proj}, we relate this construction to Schlichtkrull's description of $\eta$: he shows that for an infinite loop space $E$, the composition:
        \begin{equation}
            BE \xrightarrow{const} \cL BE \simeq \Omega BE \times BE \simeq E \times BE \xrightarrow{project} E
        \end{equation}
        is homotopic to $\eta_*$.
    \end{enumerate}
    At most stages of the argument, we must keep track of ``finite-level'' approximations for cyclic bar constructions and similar: our maps land more naturally in e.g. $GL_1^\Psi(n)$ for finite $n$ than $(GL_1^\Psi)_{h\cI}$.

    Throughout this section, we fix $(L, \xi)$ as in Theorem \ref{thm: tech OC main}.
\subsection{Set-up}
    We begin by introducing some notation and intermediary spaces. 
\subsubsection{Labelling}\label{sec: OC lab}
    We write $\cM$ for the flow category $\cM^{LL}$, $\cE$ for the right flow module $\cE^L$ and $\Upsilon$ for the left flow module $\Upsilon_L$.

    By setting $\cM_{xx} := \operatorname{pt}$, $\cM$ becomes a unital topological category, and $\cE$ and $\Upsilon$ are right and left modules over it respectively. 
    \begin{defn}
        We define the \emph{$\cO\cC$-moduli spaces} to be the Floer moduli spaces $\cE_x$, $\Upsilon_x$ and $\cM_{xx'}$, for $x, x' \in \cM$.

        \emph{$\cO\cC$-boundary stabilisation data} $S$ consists of the same data as in Definition \ref{def: stab data}, but only for the $\cO\cC$-moduli spaces; along with one additional piece of data, namely some $n_0=n^S_0 \in \cI$ and (suitably associative) injections $\sqcup_{b \in \Cptd\Delta_\cM}[n_b] \to [n_0]$ for all $\cO\cC$-Floer moduli spaces $\cM$.

        We say $S$ is \emph{saturated} if all the injections (\ref{eq: n stab}), as well as all injections of the form:
        \begin{equation}
            \left(\sqcup_{b \in \Cptd\Delta_{\cE_x}} [n_b]\right) \sqcup \left(\sqcup_{b' \in \Cptd\Delta_{\Upsilon_x}} [n_{b'}]\right) \to [n_0]
        \end{equation}
        are in fact bijections.
    \end{defn}
    \begin{lem}
        Saturated $\cO\cC$-boundary stabilisation data $S$ exists. Furthermore given some $m$, we may choose $S$ so that each $n^S_b \geq m$.
    \end{lem}
    \begin{proof}
        We construct such an $S$ explicitly (though note that we could alternatively prove it more abstractly; it is essentially the same as the proof of Lemma \ref{lem: ex big S}). We choose large integers $A,B,C \geq m$, and set $n_b = A-2C|x|$ for $b \in \Cptd\Delta_{\cE_x}$, $n_b = C(|x|-|x'|)$ for $b \in \Cptd\Delta_{\cM^{LL}_{xx'}}$, and $n_b = B+2C|x|$ for $b \in \Cptd\Delta_{\Upsilon_x}$. We set $n_0 = A+B$. The data of the maps can be chosen similarly. 
    \end{proof}

    Fix some choice $S$ of saturated $\cO\cC$-boundary stabilisation data for $L$.

    \begin{rmk}
        We could have instead not made $\cM$ unital, but then various simplicial sets we will define in what follows would only be semisimplicial sets.
    \end{rmk}

    For convenience, we also set $\Delta_{\cM_{xx}} \in \cD_{11}$ to be the labelled disc with 2 punctures, both labelled by $x$, and both boundary components labelled by $L$. For both $b \in \Cptd\Delta_{\cM_{xx}}$, we set $n_b = 0$. For a sequence $x_0, \ldots, x_k \in \cM$, we define the set $\Cptd\Delta_{x_0 \ldots x_k}$ to be the disjoint union:
    \begin{equation}
        \Cptd\Delta_{x_0 \ldots x_k} := \Cptd\Delta_{\cA_{x_0}} \sqcup \Cptd\Delta_{\cM_{x_0 x_1}} \sqcup \ldots \sqcup \Cptd\Delta_{\cM_{x_{k-1} x_k}} \sqcup \Cptd\Delta_{\cD_{x_k}}
    \end{equation}
    
    For each $n \in \cI$, the finite set $[n]$ has a natural ordering. Pulling back this ordering on $[n_0]$ along the injection 
    \begin{equation}
        \bigsqcup\limits_{b \in \Cptd\Delta_{x_0 \ldots x_k}} [n_b] \to [n_0]
    \end{equation}
    induces an ordering on the LHS (and hence on all its subsets), which we call the \emph{unnatural ordering}. $[n^+_b]$ admits a similar ordering. By the associativity from \ref{eq: n stab ass}, the maps (\ref{eq: n stab}) always respect the unnatural ordering.

    For any subset $U \subseteq \Cptd\Delta_{x_0 \ldots x_k}$, we define 
    \begin{equation}
        \omega=\omega^U_{x_0 \ldots x_k}: \bigsqcup\limits_{b \in U} [n_b] \xrightarrow{\cong} \left[ \sum\limits_{b \in U} n_b\right]
    \end{equation}
    to be the unique order-preserving bijection, with respect to the unnatural ordering on the LHS and the natural ordering on the RHS. 

    For $x,x' \in \cM$, we let $b^+ \in \Cptd\Delta_{\cM_{xx'}}$ be the upper boundary component (going counterclockwise from the output to the input), and $b^-$ the lower boundary component.

    Gluing at $x_i$ defines maps $d_i: \Cptd\Delta_{x_0 \ldots x_k} \to \Cptd\Delta_{x_0 \ldots \hat{x}_i \ldots x_k}$, and inserting a repeat of $x_i$ defines maps $s_i: \Cptd\Delta_{x_0 \ldots x_k} \to \Cptd\Delta_{x_0 \ldots x_i x_i \ldots x_k}$. These satisfy the simplicial identities, and furthermore in this case the $\cO\cC$-boundary stabilisation data determines bijections:
    \begin{equation}\label{eq: stab dat bD simp}
        \xymatrix{
            \bigsqcup\limits_{b \in d_i^{-1}\{b'\}} [n_b] 
            \ar[r] 
            &
            [n_{b'}]
            &
            \bigsqcup\limits_{b \in s_i^{-1}\{b'\}} [n_b] 
            \ar[r] 
            &
            [n_{b'}]
        }
    \end{equation}
    for all $b'$, which satisfy similar identities.

    We partition $\Cptd\Delta_{x_0,\ldots, x_k}$ into two pieces $\Cptd\Delta_{x_0,\ldots, x_k}^\pm$. These are defined by:
    \begin{equation} 
        \Cptd\Delta_{x_0\ldots x_k}^+ := \Cptd\Delta_{\cE_{x_0}} \sqcup \Cptd\Delta_{\cM_{x_0 x_1}}^+ \sqcup \ldots \sqcup \Cptd\Delta_{\cM_{x_{k-1} x_k}}^+ \sqcup \Cptd\Delta_{\Upsilon_{x_k}}
    \end{equation}
    and
    \begin{equation} 
        \Cptd\Delta_{x_0\ldots x_k}^- :=\Cptd\Delta_{\cM_{x_0 x_1}}^- \sqcup \ldots \sqcup \Cptd\Delta_{\cM_{x_{k-1} x_k}}^-
    \end{equation}
    
    We also define $\Cptd\Delta^\sim_{x_0 \ldots x_k}$ to be the quotient set $\Cptd\Delta_{x_0 \ldots x_k} / \Cptd\Delta^+_{x_0 \ldots x_k}$, and write $b^+=b^+_{x_0 \ldots x_k}$ as the image of $\Cptd\Delta^+_{x_0 \ldots x_k}$.  (Compare to Figure \ref{fig:from 2-pointed to 1-pointed}.) Since the $d_i$ and $s_i$ send $\Cptd\Delta^+$ to $\Cptd\Delta^+$, they descend to maps $d_i^\sim$ and $s_i^\sim$ between the $\Cptd\Delta^\sim$, still satisfying the simplicial identities. By construction, the $d_i$, $d^\sim_i$, $s_i$, $s_i^\sim$ commute with the quotient maps. We set:
    \begin{equation}
        n_{b^+_{x_0 \ldots x_k}} := \sum\limits_{b \in \Cptd\Delta^+_{x_0 \ldots x_k}} n_b \in \cI
    \end{equation}
    We also set $\omega^+ := \omega^{\Cptd\Delta^+_{x_0 \ldots x_k}}$; this equips $[n_{b^+_{x_0 \ldots x_k}}]$ with an unnatural ordering. By identifying $n_{b^+_{\ldots}}$ with $\sqcup_{b \in \Cptd\Delta^+_{\ldots}} [n_b]$ via $\omega^+$, the maps (\ref{eq: stab dat bD simp}) induce bijections:
    \begin{equation}\label{eq: stab dat bD simp sim}
        \xymatrix{
            \bigsqcup\limits_{b \in (d_i^\sim)^{-1}\{b'\}} [n_b] 
            \ar[r] 
            &
            [n_{b'}]
            &
            \bigsqcup\limits_{b \in (s_i^\sim)^{-1}\{b'\}} [n_b] 
            \ar[r] 
            &
            [n_{b'}]
        }
    \end{equation}
    which are compatible with the maps (\ref{eq: stab dat bD simp}) in the appropriate way.

    Note that the maps in (\ref{eq: stab dat bD simp}) and (\ref{eq: stab dat bD simp sim}) all preserve the unnatural order on both sides.

    For each $b \in \Cptd\Delta^\sim_{x_0 \ldots x_k}$, we define a simplex $\tau_b \in (S^1)_k$ by setting $\tau_{b^+_{x_0 \ldots x_k}}$ to be $\tau_0$, and proceding inductively by specifying that for $b$ directly counterclockwise of $b'$, $\tau_b=\tau_{l+1}$ when $\tau_{b'} = \tau_l$.

\subsubsection{Models for twist maps}\label{sec: mode twis}

    We define $\overline L$ to be the space $\overline L = B(\cE,\cM,\Upsilon)$; concretely, this is the geometric realisation of the simplicial space:

    \begin{equation}
        [k] \mapsto \bigsqcup\limits_{x_0, \ldots, x_k \in \cM} \cE_{x_0} \times \cM_{x_0 x_1} \times \ldots \times \cM_{x_{k-1} x_k} \times \Upsilon_{x_k}
    \end{equation}

    There is a natural map $p': \overline L \to L$, given by the evaluation map $\Upsilon_x \to L$ for each $x$, cf. Remark \ref{rmk: ups fact}.

    Recall from Definition \ref{def: 95} that we are given maps $\Xi_\cN: \cN \to \prod_{b \in \Cptd\Delta_\cN} GL_1^\Psi(n_b)$. We write $\Xi_b: \cN \to GL_1^\Psi(n_b)$ for the projection to the $b^{th}$ factor.
    \begin{defn}
        We construct a map $\overline \Xi_0: \overline L \to GL_1^\Psi(n_0)$ as follows. 
    
        From the $\cO\cC$-boundary stabilisation data $S$, we obtain an injection $\sqcup_{b \in \Cptd\Delta_{x_0, \ldots, x_k}} [n_b] \to [n_0]$
        which induces a map (as in Remark \ref{rmk: inj prob}):
        \begin{equation}\label{eq: cyc prod tw}
            \prod\limits_{b \in \Cptd\Delta_{x_0 \ldots x_k}} GL_1^\Psi(n_b) \to GL_1^\Psi(n_0)
        \end{equation}
        Applying the maps $\Xi_b$ defines a map from $\cE_{x_0} \times \cM_{x_0x_1} \times \ldots \times \cM_{x_{k-1} x_k} \times \Upsilon_{x_k}$ to the domain of (\ref{eq: cyc prod tw}); composing with (\ref{eq: cyc prod tw}) then gives maps from the $k$-simplices $B(\cE,\cM,\Upsilon)_k \to GL_1^\Psi(n_0)$. The associativity of the pre-twisting data implies these assemble to form a map $\overline \Xi_0: \overline L \to GL_1^\Psi(n_0)$.
    \end{defn}
    Most of the work in the rest of this section will be in proving the following proposition:
    
    \begin{prop}\label{prop: OC tech main}
        The following diagram commutes up to homotopy:
        \begin{equation}\label{eq: OC tech main}
            \xymatrix{
                \overline L
                \ar[r]_{p'}
                \ar[d]_{\overline \Xi_0} 
                &
                L 
                \ar[r]_-{\xi} 
                &
                B(GL_1^\Psi)_{h\cI} 
                \ar[d]_{\eta_*}
                \\
                GL_1^\Psi(n_0) 
                \ar[rr]_{\iota_0} 
                &
                &
                (GL_1^\Psi)_{h\cI}
            }
        \end{equation}
    \end{prop}
\subsection{$\cO\cC$ from twist maps}
    Before proving Proposition \ref{prop: OC tech main}, we first deduce Theorem \ref{thm: tech OC main}.
    
    Let $\hat B(\cE,\cM,\Upsilon) = \colim_{[k] \in \Delta^{op}} B(\cE,\cM,\Upsilon)_k$; here we take the colimit over the simplex category, in contrast to the usual bar construction, which is the homotopy colimit. The natural transformation $\hocolim \to \colim$ provides a natural map $\overline L =B(\cE,\cM,\Upsilon) \to \hat B(\cE,\cM,\Upsilon)$; this is a homotopy equivalence since the inclusion of a boundary face into a manifold with faces is a cofibration. $(\Upsilon \circ \cE)$ is constructed as a subspace of a coequaliser diagram in the form of \cite[Equation (33)]{PS2}, which deformation retracts to $\hat B(\cE,\cM,\Upsilon)$ (via the projection $[0,\eps) \to \{0\}$), so we obtain a map $(\Upsilon \circ \cE) \to \overline L$.

    Let $\widehat{(\Upsilon \circ \cE)}$ be the ``twist'' of $(\Upsilon \circ \cE)$ by the composition $(\Upsilon \circ \cE) \to \overline L \xrightarrow{\overline \Xi_0} GL_1^\Psi(n_0)$; explicitly, this is obtained by taking the zero-section of a generic perturbation to the adjoint map $(\Upsilon \circ \cE) \times S^{n_0} \to \Thom(E_\Psi(n_0))$ as in Section \ref{sec:twisted moduli}; this is naturally $E_\Psi$-oriented relative to $\phi$ in the same way as in Section \ref{sec: E Psi or tw modu}.
    
    \begin{lem}\label{lem: 11 point 8}
        $\widehat{(\Upsilon \circ \cE)}$ is $E_\Psi$-oriented (relative to $\phi$) bordant to $(\Upsilon_\xi \circ \cE^\xi)$ over $L$.
    \end{lem}
    \begin{proof}
        This is essentially by opening up the constructions on both sides: $(\Upsilon_\xi \circ \cE^\xi)$ is obtained from $(\Upsilon \circ \cE)$ by twisting by a twist map $\Xi$ on each $\Upsilon_x \times \cE_x$ (recall $(\Upsilon \circ \cE)$ is constructed essentially by gluing these pieces together, cf. \cite[Section 4]{PS}). But one can alternatively twist after gluing these pieces together, and this is exactly $\widehat{(\Upsilon \circ \cE)}$.
    \end{proof}

    \begin{proof}[Proof of Theorem \ref{thm: tech OC main} from Proposition \ref{prop: OC tech main}]
        Let $\cM_0$ be the bordism from $(\Upsilon \circ \cE)$ to $L$ over $L$ (which is $E_\Psi$-oriented relative to $\phi|_L$) from Theorem \ref{thm: unor OC mod}(2). Let $\hat\cM_0$ be its twist (as in Section \ref{sec:twisted moduli}) by the composition of the map $\cM_0 \to L$ with the map from $L$ to $(GL_1^\Psi)_{h\cI}$ in (\ref{eq: OC tech main}); note that we must choose some homotopy lift of this composition along the inclusion $GL_1^\Psi(n_0) \to (GL_1^\Psi)_{h\cI}$ for this to make sense. This provides a cobordism between the twist $\hat L$ by (a lift of) $\eta_* \circ \xi$ and some twist of $(\Upsilon \circ \cE)$.

        By Proposition \ref{prop: OC tech main}, the restriction of the twist map for $\cM_0$ to $(\Upsilon \circ \cE)$ is homotopic to $\overline \Xi_0$. It follows that both twists of $(\Upsilon \circ \cE)$ are cobordant (over $L$ and with appropriate orientation data), and hence also to $\hat L$. It then follows from Lemma \ref{lem: 11 point 8} that these are also cobordant to $(\Upsilon_\xi \circ \cE^\xi)$, which equals $[L,\xi]$ (by definition). Theorem \ref{thm: tech OC main} then follows from Lemma \ref{lem: tw cap}.
    \end{proof}

    The maps $ev$ from each moduli space to $\Omega L$ (cf. (\ref{eq: ev}) together assemble to form a map:
    \begin{equation}
        ev_\partial: \overline L \to \BB^{cyc}\Omega L
    \end{equation}
    
    \begin{lem}
        For an appropriate choice of Floer data, there is a map $\overline{ev}_\partial:\overline L \to \oBB^{cyc} \Omega L$ making the following diagram commute up to homotopy:
        \begin{equation}\label{eq: cyc tw 4}
            \xymatrix{
                \overline L 
                \ar[d]_{\overline{ev}_\partial}
                \ar[dr]^{ev_\partial} 
                &
                \\
                \oBB^{cyc} \Omega L 
                \ar[r]
                &
                \BB^{cyc} \Omega L
            }
        \end{equation}
    \end{lem}
    \begin{proof}
        For Floer data induced by a $C^2$-small Morse function on $L$, the relevant moduli spaces are homeomorphic to Morse moduli spaces\footnote{A stronger statement, namely that the Floer flow category and Morse flow category can be identified as smooth unoriented flow categories, is proved and used in \cite{PS4}.}, compatibly with evaluation to $L$ up to a $C^0$-small discrepancy. Hence the condition in Definition \ref{def: ev0 bar} holds up to a $C^0$-small discrepancy; homotoping the evaluation maps (inductively in the dimension of the moduli space, so as to do so compatibly with respect to gluing) gives a map homotopic to $ev_\delta$ that lands in $\oBB^{cyc}\Omega L$.
    \end{proof}
    By construction, we find:
    \begin{lem}
        The following diagram commutes up to homotopy:
        \begin{equation}\label{eq: cyc tw 1}
            \xymatrix{
                \overline L
                \ar[r]^{p'}
                \ar[dr]_{\overline{ev}_\partial}
                &
                L
                \\
                &
                \oBB^{cyc}\Omega L
                \ar[u]_{ev_0}
            }
        \end{equation}
    \end{lem}

\subsection{Cyclic twists}\label{sec: cyc tw}
    Recall $S$ is our choice of $\cO\cC$-boundary stabilisation data. Using $S$, we define a `finite-level approximation' of the homotopy colimit $\BB^{cyc}(GL_1^\Psi)_{h\cI}$:
    \begin{defn}
        We define $\BB^{cyc}_S GL_1^\Psi$ to be the realisation of the simplicial space $\left(\BB_S^{cyc}GL_1^\Psi\right)_\bullet$ defined by:
        \begin{equation}\label{eq: def BBcycS}
            \left(\BB_S^{cyc}GL_1^\Psi\right)_\bullet: [k] \mapsto \bigsqcup\limits_{x_0, \ldots, x_k \in \cM} \left(\prod\limits_{b \in \Cptd\Delta_{x_0 \ldots x_k}} GL_1^\Psi(n_b)\right)
        \end{equation}
        and simplicial maps induced by the maps $d_i, s_i$ from Section \ref{sec: OC lab} along with multiplication in $GL_1^\Psi$. 

        Inclusion of 0-simplices defines a map of spaces
        \begin{equation}
            \iota_0: \BB^{cyc}_S GL_1^\Psi \to \BB^{cyc}(GL_1^\Psi)_{h\cI}
        \end{equation}
        The maps $\Xi_b$ together assemble a map of simplicial spaces $\left(\XX^{cyc}_S\right)_\bullet: B(\cE,\cM,\Upsilon)_\bullet \to \left(\BB^{cyc}_SGL_1^\Psi\right)_\bullet$, and hence a map of spaces $\XX^{cyc}_S: \overline L \to \BB_S^{cyc} GL_1^\Psi$.
        
        We define $\XX^{cyc}$ to be the composition of $\XX_S^{cyc}$ with $\iota_0$, so that the following diagram commutes: 
        \begin{equation}\label{eq: cyc tw 2}
            \xymatrix{
                \overline L
                \ar[d]_{\XX^{cyc}_S}
                \ar[dr]^{\XX^{cyc}}
                &
                \\
                \BB^{cyc}_S GL_1^\Psi
                \ar[r]_{\iota_0}
                &
                \BB^{cyc}(GL_1^\Psi)_{h\cI}
            }
        \end{equation} 
    \end{defn}
    \begin{defn}
        We work with the following notational convention. For a map $\prescript{}{2}{F}$ into $\BB^{cyc}(\ldots)$, 
        we write $F$ (i.e. drop the $\prescript{}{2}{\cdot}{}$) for the postcomposition with $\BB_1$ (cf. (\ref{eq:2b1})). Similarly, we add a $\prescript{}{2}{\cdot}{}$ for the precomposition with $\BB_1$ with a map out of $\BB^{cyc}(\ldots)$.

        For example, $\Xi^{cyc}$ is defined to be $\BB_1 \circ \XX^{cyc}$.
    \end{defn}

    \begin{defn}
    
        We define $B^{cyc}_S GL_1^\Psi$ to be the realisation of the simplicial space $\left(B_S^{cyc}GL_1^\Psi\right)_\bullet$ defined by:
        \begin{equation}\label{eq: BcycSbT}
            \left(B_S^{cyc}GL_1^\Psi\right)_\bullet: [k] \mapsto \bigsqcup\limits_{x_0, \ldots, x_k \in \cM} \left( \prod\limits_{b \in \Cptd\Delta^\sim_{x_0 \ldots x_k}} GL_1^\Psi(n_b)\right)
        \end{equation}
        
        Multiplication in $GL_1^\Psi$ and the maps $d_i^\sim$ and $s_i^\sim$ introduced in Section \ref{sec: OC lab} together define the face and degeneracy maps.

        The maps induced by $\omega^+_{x_0 \ldots x_k}$ and multiplication in $GL_1^\Psi$ define maps of simplicial spaces $\BB_1: \left(\BB^{cyc}_SGL_1^\Psi\right)_\bullet \to \left(B_S^{cyc}GL_1^\Psi\right)_\bullet$ and hence maps of spaces $\BB_1: \BB^{cyc}_SGL_1^\Psi \to B^{cyc}_SGL_1^\Psi$.
    \end{defn}
    \begin{lem}
        The following diagram commutes up to homotopy:
        \begin{equation}\label{eq: cyc tw 3}
            \xymatrix{
                \overline L 
                \ar[r]_{ev_\partial}
                \ar[d]_{\XX^{cyc}} 
                &
                \BB^{cyc}\Omega L 
                \ar[d]_{\BB^{cyc}\Omega \xi_L} 
                \\
                \BB^{cyc}(GL_1^\Psi)_{h\cI} 
                \ar@{~>}[r]_{\widetilde\iota_1} 
                &
                \BB^{cyc} \Omega B(GL_1^\Psi)_{h\cI}
            }
        \end{equation}
    \end{lem}
    \begin{proof}
        From how we construct the pre-twisting data $\Xi$ in (\ref{eq: Xi lift prob}), we see that each $\left(\XX^{cyc}_S\right)_k$ is defined to be a solution to the following homotopy lifting problem
        \begin{equation}\label{eq: hom lift prob 2}
            \xymatrix{
                &
                &
                \prod\limits_{b \in \Cptd\Delta_{x_0 \ldots x_k}} GL_1^\Psi(n_b)
                \ar[d]_{\iota_0}
                \\
                &
                &
                (GL_1^\Psi)_{h\cI}^{\Cptd\Delta_{x_0 \ldots x_k}} 
                \ar@{~>}[d]_{\widetilde\iota_1}
                \\
                \cE_{x_0} \times \cM_{x_0 x_1} \times \ldots \times \cM_{x_{k-1} x_k} \times \Upsilon_{x_k} 
                \ar[r]_-{ev_\partial} 
                \ar@{-->}[uurr]_{\left(\XX^{cyc}_S\right)_k}
                &
                \left(\Omega L\right)^{\Cptd\Delta_{x_0 \ldots x_k}} 
                \ar[r]_{\Omega \xi_L} 
                &
                \left(\Omega B (GL_1^\Psi)_{h\cI}\right)^{\Cptd\Delta_{x_0 \ldots x_k}}
            }
        \end{equation}

        (this diagram is just a product of various copies of (\ref{eq: Xi lift prob})).

        Furthermore, these homotopy lifts are compatible with Floer gluings (in the bottom left corner of (\ref{eq: hom lift prob 2})) and with multiplication in the other factors.

        It follows that $\XX^{cyc}_S: \overline L \to \BB^{cyc}_S GL_1^\Psi$ solves a similar homotopy lifting problem; this is exactly (\ref{eq: cyc tw 3}).
    \end{proof}
\subsection{Inclusion of constant loops}\label{sec: incl cons loop}
    \begin{prop}
        The following diagram commutes up to homotopy:
        \begin{equation}\label{eq: step 1}
            \xymatrix{
                \overline L
                \ar[r]_{p'}
                \ar[d]_{\XX^{cyc}}
                &
                L
                \ar[r]_{\xi_L}
                &
                B(GL_1^\Psi)_{h\cI} 
                \ar[d]_{\operatorname{const}}
                \\
                \BB^{cyc}(GL_1^\Psi)_{h\cI} 
                \ar@{~>}[r]_{ \iota_1}
                &
                \BB^{cyc}\Omega B (GL_1^\Psi)_{h\cI} 
                \ar[r]_{\ee}
                &
                \cL B(GL_1^\Psi)_{h\cI} 
            }
        \end{equation}
        Here ${\iota}_1$ is the zig-zag of spaces obtained from Lemma \ref{lem: zig zag} (and applying $\BB^{cyc}$), and $\ee$ is as in Definition \ref{def: cyc bar free loop} (recall we implicitly compose with $\BB_1$).
    \end{prop}
    \begin{proof}
        Consider the following diagram.
        \begin{equation}
            \xymatrix{
                \overline L 
                \ar[r]^{p'} 
                \ar[d]^-{\overline{ev}_\partial}
                \ar@/^-2.0pc/[dd]_{\XX^{cyc}}
                \ar@{}[dr]^(.4){(\ref{eq: cyc tw 1})}
                &
                L
                \ar[dr]^{\operatorname{const}}
                \ar[r]^{\xi_L} 
                \ar@{}[d] | {(\ref{eq: bar pre 1})}
                &
                B(GL_1^\Psi)_{h\cI} 
                \ar@/^2.0pc/[dd]^{\operatorname{const}}
                \\
                \oBB^{cyc}\Omega L 
                \ar[r]
                \ar[ur]_{ev_0}
                \ar@{}[dr] | {(\ref{eq: cyc tw 4})+(\ref{eq: cyc tw 3})}
                &
                \BB^{cyc}\Omega L 
                \ar[r]_{\ee} 
                \ar[d]^{\BB^{cyc}\Omega\xi_L}
                &
                \cL L 
                \ar[d]_{\cL \xi_L} 
                \\
                \BB^{cyc}(GL_1^\Psi)_{h\cI} 
                \ar@{~>}[r]_{\widetilde \iota_1}
                &
                \BB^{cyc}\Omega B (GL_1^\Psi)_{h\cI} 
                \ar[r]_{\ee}
                &
                \cL B(GL_1^\Psi)_{h\cI}
            }
        \end{equation}
        All but two of the subdiagrams have already been show to commute (up to homotopy); we have indicated in the interior of each subdiagram where this has been shown. It is clear that the other two subdiagrams commute. The outer square is exactly (\ref{eq: step 1}), so we are done.
    \end{proof}
    Heuristically Proposition \ref{sec: incl cons loop} says that some model for the twist map (going down and then across) factors through the inclusion of constant loops.
\subsection{Projection}\label{sec: oc proj}

    We will use Proposition \ref{prop: 331} to identify the Hopf map. To do so, we must pass from $\cI$-spaces to $\Gamma$-spaces. Unfortunately, this involves passing through the equivalence $GL_1^{\Psi,\Gamma}(S^1_+) \to V^{cyc}GL_1^\Psi$, which does not (to our knowledge) admit a concrete homotopy inverse. We construct an explicit homotopy lift of the map $B^{cyc}_S GL_1^\Psi \to B^{cyc}(GL_1^\Psi)_{h\cI} \to V^{cyc}GL_1^\Psi$ along this equivalence; roughly, this uses the fact that $B^{cyc}_S GL_1^\Psi$ is built out of finitely many pieces, whereas $B^{cyc}(GL_1^\Psi)_{h\cI}$ is a homotopy colimit over infinitely many terms. Slightly more precisely, we exploit the unnatural order to build various functors between categories we index homotopy colimits over.
    \begin{defn}
        For $x_0, \ldots, x_k \in \cM$, we define an object $\theta_{x_0 \ldots x_k} \in \cS((S^1_+)_k)$ as follows (recall objects of this category are functors $\cP({(S^1_+)_k}) \to \cI$ satisfying a condition, cf. Definition \ref{def: sS cat}).

        For $1 \leq l \leq k$, we set $n_{b_l}$ to be $n_b$ for $b \in \Cptd\Delta^-_{\cM_{x_{l-1}x_l}}$ the unique element, and we set $n_{b_0}$ to be $n^+_{x_0 \ldots x_k}$.
        
        Let $U \subseteq \overline{\left(S^1_+\right)_k}$. 
        We set:
        \begin{equation}
            \theta_{x_0 \ldots x_k}(U) := \sum\limits_{\tau_j \in U} n_{b_j} \in \cI
        \end{equation}

        For an inclusion $i: U \to U'$, the corresponding map $\theta_{x_0 \ldots x_k}(U) \to \theta_{x_0 \ldots x_k}(U')$ is defined so that the following diagram commutes:
        \begin{equation}
            \xymatrix{
                \left[\sum\limits_{\tau_j \in U} {n_{b_j}}\right] 
                \ar[rr]_{\theta_{x_0 \ldots x_k}(i)} 
                &&
                \left[\sum\limits_{\tau_j \in U'} {n_{b_j}}\right]
                \\
                \bigsqcup\limits_{\tau_j \in U} \left[{n_{b_j}}\right] 
                \ar[u]_{\omega^U}^\cong
                \ar@{^{(}->}[rr] 
                &&
                \bigsqcup\limits_{\tau_j \in U'} \left[{n_{b_j}}\right]
                \ar[u]_{\omega^{U'}}^\cong
            }
        \end{equation}  
        where the bottom right arrow is the identity $\left[{n_{b_j}}\right] \to \left[{n_{b_j}}\right]$ for all $\tau_j \in U$.
    \end{defn}

    \begin{lem}\label{lem: simp theta}
        The following equalities hold:
        \begin{enumerate}
            \item $(d_i)_*\theta_{x_0 \ldots x_k} = \theta_{x_0 \ldots \hat x_i \ldots x_k}$
            \item $(s_i)_*\theta_{x_0 \ldots x_k} = \theta_{x_0 \ldots x_i x_i \ldots x_k}$
        \end{enumerate}
        where $d_i: (S^1_+)_k \to (S^1_+)_{k-1}$ and $s_i: (S^1_+)_k \to (S^1_+)_{k+1}$ are the face and degeneracy maps respectively.
    \end{lem}
    \begin{proof}
        This is essentially by opening up definitions. On objects, this follows from the saturatedness condition. On morphisms, this follows from existence of the unnatural ordering and the fact that for $U \subseteq U'$, morphisms $\theta_{\ldots}(U) \to \theta_{\ldots}(U')$ are constructed to be unnatural order-preserving.
    \end{proof}
    \begin{rmk} 
        The reason we imposed saturatedness on $S$ is to ensure \emph{equalities} of functors in Lemma \ref{lem: simp theta}. Without imposing the saturatedness condition, we would instead have that $(d_i)_*\theta_{x_0 \ldots x_k}$ and $\theta_{x_0 \ldots \hat x_i \ldots x_k}$ are related by a natural transformation, which would necessitate incorporating more coherence technology into the definition of $j$.
    \end{rmk}

    \begin{defn}
        We define a map of simplicial spaces $j_\bullet: \left(B_S^{cyc}GL_1^\Psi\right)_\bullet \to \left(GL_1^{\Psi,\Gamma}(S^1_+)\right)_\bullet$, inducing a map $j$ on their realisations, as follows.

        On $k$-simplices, $j_k$ is defined on the component corresponding to $x_0, \ldots, x_k$ (cf. (\ref{eq: BcycSbT})) to be the composition:

        \begin{equation}
            \prod\limits_{b \in \Cptd\Delta^\sim_{x_0 \ldots x_k}} GL_1^\Psi(n_b)
            \xrightarrow{=}
            \prod\limits_{\tau_l \in \overline{(S^1_+)_k}} GL_1^\Psi\left( \theta_{x_0 \ldots x_k}(\{\tau_l\})\right) 
            \xrightarrow{\iota_0} 
            GL_1^{\Psi,\Gamma} \left(\left(S^1_+\right)_k\right)
        \end{equation}
        where the first map comes from identifying $n_b = \theta_{x_0 \ldots x_k}(\{\tau_b\})$, and the second map $\iota_0$ is the inclusion of 0-simplices corresponding to the object $\theta_{x_0 \ldots x_k} \in \cS({(S^1_+)_k})$ in the category which $GL_1^{\Psi,\Gamma}$ is a homotopy colimit over (cf. Definition \ref{def: Gam spc from I mon}).
    \end{defn}
    
    It follows from Lemma \ref{lem: simp theta} that $j_\bullet$ is compatible with the face and degeneracy maps and so does define a map of simplicial spaces. 
    \begin{lem}
        The following diagram commutes up to homotopy:
        \begin{equation}\label{eq: 10.18}
            \xymatrix{
                B^{cyc}_S GL_1^\Psi
                \ar[r]_{\iota_0}
                \ar[d]_j
                &
                B^{cyc}(GL_1^\Psi)_{h\cI} 
                \ar[d] 
                \\
                GL_1^{\Psi,\Gamma}(S^1_+) 
                \ar[r]
                &
                V^{cyc} GL_1^\Psi
            }
        \end{equation}
    \end{lem}
    \begin{proof}
    
        We construct an explicit 3-step homotopy. 
        
        As a first step, we define an intermediate homotopy  $\{F_t\}_{t \in [0,1]}: B^{cyc}_S GL_1^\Psi \to V^{cyc}GL_1^\Psi$, for $j=0,1$, as follows. 

        Here we use the observation in Remark \ref{rmk: I V} that any isometric embedding $\bR^k \to \bR^l$ induces a map $GL_1^\Psi(k) \to GL_1^\Psi(l)$. For any $n \in \cI$ and $t \in [0,1]$, let $f^n_t: \bR^n \to \bR^{2n}$ be the isometric embedding given by the block matrix $\begin{pmatrix}
            \cos \frac \pi 2 t \cdot \operatorname{Id}_{\bR^n} \\
            \sin \frac \pi 2 t \cdot \operatorname{Id}_{\bR^n}
        \end{pmatrix}$ with respect to the decomposition $\bR^{2n}=\bR^n\oplus\bR^n$. Note $f^n_0$ and $f^n_1$ are the inclusions of the first $n$ and the last $n$ co-ordinates respectively, so they're exactly the linear maps induced by the two corresponding inclusions $[n] \to [2n]$.
        
        On the component of the $k$-simplices corresponding to $x_0, \ldots, x_k \in \cM$, we define $F_t$ to be the map which is induced factorwise by $f_t^{n_b}$:

        \begin{equation}
            \prod\limits_{b \in \Cptd\Delta^\sim_{x_0 \ldots x_k}} GL_1^\Psi(n_b) \to \prod\limits_{b \in \Cptd\Delta^\sim_{x_0 \ldots x_k}} GL_1^\Psi(2n_b)
        \end{equation}
        followed by the inclusion of the RHS into the homotopy colimit (cf. (\ref{eq: Vcyc})) corresponding to the object $(n_{b_0}, \ldots, n_{b_k}, \theta_{x_0, \ldots, x_k}) \in \cI^{k+1} \times \cS((S^1_+)_k)$, where the $b_i$ are labelled going counterclockwise, beginning with $b_0 := b^+$. This specifies $F_t$ over the $k$-simplices, and is compatible with the face and degeneracy maps and so induces a map on realisations.

        In particular, note that the endpoints of the homotopy $F_0$ and $F_1$ use only functoriality of $GL_1^\Psi(\cdot)$ under maps in $\cI$, rather than arbitrary isometric embeddings.

        By existence of the family $\{F_t\}$, $F_0$ and $F_1$ are homotopic.
        \begin{claim}\label{claim: 1019}
            $F_0$ is homotopic to the composition of $\iota_0$ with the right vertical arrow in (\ref{eq: 10.18}).
        \end{claim}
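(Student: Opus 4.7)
The plan is to exhibit both maps as induced by functors into the indexing category for the homotopy colimit defining $V^{cyc}GL_1^\Psi$, and then to produce a natural transformation between those functors; Lemma \ref{lem: hocolim hom} then converts this natural transformation into a homotopy levelwise in the simplicial degree $k$, and simplicial compatibility assembles the levelwise homotopies on realizations.

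First I would unpack both maps on $k$-simplices corresponding to a fixed tuple $(x_0, \ldots, x_k)$. For $F_0$, the isometric embedding $f_0^{n_b}: \bR^{n_b} \to \bR^{2n_b}$ is the inclusion of the first $n_b$ coordinates, so $F_0$ factors $\prod_b GL_1^\Psi(n_b)$ through $\prod_b GL_1^\Psi(2n_b)$ and then includes at the object $(n_{b_0}, \ldots, n_{b_k}, \theta_{x_0 \ldots x_k}) \in \cI^{k+1} \times \cS((S^1_+)_k)$; note that $\theta_{x_0 \ldots x_k}(\{\tau_i\}) = n_{b_i}$, so the term at this object is $\prod_i GL_1^\Psi(n_{b_i} + n_{b_i})$, with $f_0^{n_b}$ corresponding to the canonical $\cI$-inclusion of the first factor $[n_{b_i}] \hookrightarrow [n_{b_i}]\sqcup[n_{b_i}] \to [2n_{b_i}]$. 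The other map, being the composite $B^{cyc}_S GL_1^\Psi \to B^{cyc}(GL_1^\Psi)_{h\cI} \to V^{cyc}GL_1^\Psi$, is the inclusion at the object $(n_{b_0}, \ldots, n_{b_k}, \theta_{\emptyset})$, where $\theta_{\emptyset}$ is the initial object of $\cS((S^1_+)_k)$ assigning $0 \in \cI$ to every singleton; here the term is $\prod_i GL_1^\Psi(n_{b_i} + 0) = \prod_i GL_1^\Psi(n_{b_i})$.

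Next I would produce the required natural transformation $T_{x_0 \ldots x_k}: \theta_{\emptyset} \to \theta_{x_0 \ldots x_k}$ in $\cS((S^1_+)_k)$, which exists and is unique since $\theta_{\emptyset}$ is initial. Combined with the identity on $\cI^{k+1}$, $T_{x_0 \ldots x_k}$ is a morphism in $\cI^{k+1} \times \cS((S^1_+)_k)$ inducing the map on homotopy-colimit terms $GL_1^\Psi(n_{b_i} + 0) \to GL_1^\Psi(n_{b_i} + n_{b_i})$ coming from the unnatural-order-preserving inclusion $[n_{b_i}] \hookrightarrow [2n_{b_i}]$: this is identically $f_0^{n_b}$. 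So on the nose, on each component of each simplicial level, $F_0$ agrees with the postcomposition of $\iota_0 \circ (\text{right vertical arrow})$ with the colimit structure map determined by $T_{x_0 \ldots x_k}$.

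Finally, one must check the simplicial compatibility. The natural transformations $T_{x_0 \ldots x_k}$ assemble coherently with faces and degeneracies, because Lemma \ref{lem: simp theta} ensures that $(d_i)_* \theta_{x_0 \ldots x_k} = \theta_{x_0 \ldots \hat{x}_i \ldots x_k}$ and $(s_i)_* \theta_{x_0 \ldots x_k} = \theta_{x_0 \ldots x_i x_i \ldots x_k}$ as equalities of functors (this is where the saturatedness of $S$ is essential), and the universal property of $\theta_{\emptyset}$ guarantees uniqueness of the $T$'s once restricted to each simplex. The levelwise homotopy produced by Lemma \ref{lem: hocolim hom} therefore descends to a homotopy on realizations, proving the claim. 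The main obstacle is this last bookkeeping: verifying that the two paths from $\theta_\emptyset$ to the pushed-forward $\theta$ genuinely coincide under $d_i$ and $s_i$, rather than merely agreeing up to a further natural transformation. The saturatedness hypothesis on the boundary stabilization data reduces this to an elementary check using the unnatural ordering.
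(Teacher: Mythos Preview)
Your proposal is correct and follows essentially the same route as the paper: both identify the two maps on each simplicial level as inclusions at objects $(n_{b_0},\ldots,n_{b_k},0)$ and $(n_{b_0},\ldots,n_{b_k},\theta_{x_0\ldots x_k})$ of $\cI^{k+1}\times\cS((S^1_+)_k)$, produce the natural transformation given by the unique map out of the initial object in the $\cS$-factor (which recovers $f_0$ on terms), apply Lemma~\ref{lem: hocolim hom}, and check simplicial compatibility. Your account is in fact slightly more explicit than the paper's about the role of Lemma~\ref{lem: simp theta} and saturatedness in ensuring the levelwise homotopies assemble over faces and degeneracies.
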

        \begin{proof}[Proof of claim]

            Choose a component of the $k$-simplices of the domain corresponding to $x_0,\ldots,x_k \in \cM$. Let $\cC=\{\pt\}$ and $\cD=\cI^{k+1}\times \cS((S^1_+)_k)$. The two maps we wish to compare are of the form
            \begin{equation}
                \prod_{b \in \partial^\sim\bD_{x_0 \ldots x_k}} GL_1^\Psi(n_b) \to \underset\cD\hocolim GL_1^\Psi(\ldots) \times \ldots \times GL_1^\Psi(\ldots)
            \end{equation}
            We think of the left hand side as a homotopy colimit over the appropriate constant functor $\cC \to Spc_*$. The two maps are induced (as in Definition \ref{def: hocolim func}, and using the same notation) by functors $A,A':\cC \to \cD$ along with appropriate natural transformations. Here $A$ sends $\pt$ to $(n_{b_0},\ldots,n_{b_k},0)$ and $A'$ sends $\pt$ to $(n_0,\ldots,n_k,\theta_{x_0\ldots x_k})$. A natural transformation $U: A \to A'$ is given by the map $f_0$ factorwise on the RHS in (\ref{eq: Vcyc}) corresponding to $A(\pt)$ and $A'(\pt)$. 

            Lemma \ref{lem: hocolim hom} then constructs the required homotopy. A direct check shows this homotopy is compatible with the face and degeneracy maps, and so defines a homotopy of simplicial spaces and therefore a homotopy on realisations.
        \end{proof}
        An identical argument to the proof of Claim \ref{claim: 1019} shows that $F_1$ is homotopic to the composition of $j$ with the lower horizontal arrow in (\ref{eq: 10.18}). These three homotopies together provide the required homotopy.
    \end{proof}
    \begin{defn}
        We define a map $r: B^{cyc}_S GL_1^\Psi \to GL_1^\Psi(n_0)$ as follows. On each component of the $k$-simplices $(B^{cyc}_S GL_1^\Psi)_k$, we define maps
        \begin{equation}
            \prod\limits_{b \in \Cptd\Delta^\sim_{x_0 \ldots x_k}} GL_1^\Psi(n_b) \to GL_1^\Psi(n_0)
        \end{equation}
        to be induced by the maps induced by $S$:
        \begin{equation}
            \bigsqcup\limits_{b \in \Cptd\Delta^\sim_{x_0 \ldots x_k}} [n_b] \to [n_0]
        \end{equation}
        These are compatible with the face and degeneracy maps and hence induce the map $r$ desired.
    \end{defn}
    \begin{lem}
        The following diagram commutes:
        \begin{equation}\label{eq: 1034}
            \xymatrix{
                \overline L
                \ar[r]^{\Xi^{cyc}}
                \ar[dr]_{\overline \Xi_0}
                &
                B_S^{cyc}GL_1^\Psi
                \ar[d]^r
                \\
                &
                GL_1^\Psi(n_0)
            }
        \end{equation}
    \end{lem}
    \begin{proof}
        This follows from opening up the definitions, and using the associativity of the boundary stabilisation data.
    \end{proof}
    \begin{lem}
        The following diagram commutes: 
        \begin{equation}\label{eq: 1035}
            \xymatrix{
                &
                B_S^{cyc}GL_1^\Psi 
                \ar[r]^j
                \ar[dl]_r 
                &
                GL_1^{\Psi, \Gamma}(S^1_+)
                \ar[d]^{S^1_+ \to S^0} 
                \\
                GL_1^\Psi(n_0)
                \ar[r]_{\iota_0}
                &
                (GL_1^\Psi)_{h\cI}
                \ar[r]_=
                &
                GL_1^{\Psi,\Gamma}(S^0)
            }
        \end{equation}
    \end{lem}
    \begin{proof}
        This follows from opening up the definitions, and using the fact that the map $S^1_+ \to S^0$ sends $\theta_{x_0 \ldots x_k}$ to the functor $\cP(S^0) \to \cI$ sending the unique nontrivial object to $n_0$.
    \end{proof}
    \begin{proof}[Proof of Proposition \ref{prop: OC tech main}]
        Consider the following diagram:
        \begin{equation}
            \xymatrix{
                \overline L
                \ar[r]^{p'}
                \ar[d]^{\overline\Xi_S^{cyc}}
                \ar[dr]^{\Xi^{cyc}}
                \ar@/^-2.0pc/[dd]_{\overline{\Xi}_0}
                &
                L
                \ar[r]^{\xi_L}
                \ar@{}[dr] | {(\ref{eq: step 1})+(\ref{eq: bar pre 1 revisited})}
                &
                B(GL_1^\Psi)_{h\cI} 
                \ar[d]^{\operatorname{const}}
                \\
                B^{cyc}_SGL_1^\Psi 
                \ar[r]_{\iota_0} 
                \ar[d]_r
                \ar[dr]_j
                &
                B^{cyc}(GL_1^\Psi)_{h\cI} 
                \ar[r]_q^\simeq
                \ar[dr]
                \ar@{}[d] | {(\ref{eq: 10.18})}
                &
                \cL B(GL_1^\Psi)_{h\cI} 
                \ar@/^2.0pc/[dd]
                \\
                GL_1^\Psi(n_0) 
                \ar[d]_{\iota_0} 
                \ar@{}[dr] | {(\ref{eq: 1035})}
                &
                GL_1^{\Psi,\Gamma}(S^1_+) 
                \ar[r]^\simeq
                \ar[d]^{S^1_+\to S^0} 
                \ar[dr]
                &
                V^{cyc}GL_1^\Psi 
                \\
                (GL_1^\Psi)_{h\cI} 
                \ar[r]_= 
                &
                GL_1^{\Psi,\Gamma}(S^0)
                &
                \cL  GL_1^{\Psi,\Gamma}(S^1)
            }
        \end{equation}
        All subdiagrams have already been shown to commute up to homotopy. Most are indicated. The remaining ones are the triangles near the top left, which are (\ref{eq: 1034}) and (\ref{eq: cyc tw 2}); and the irregular pentagon on the right, which is (\ref{eq: 330}).

        Proposition \ref{prop: 331} then shows that the composition along the right hand side (inverting equivalences as appropriate) is exactly $\eta_*$; it follows that the outside of the above diagram is exactly (\ref{eq: OC tech main}).
    \end{proof}
\subsection{Proof of Corollary \ref{2}}\label{proof 2}
\begin{proof}[Proof of Corollary \ref{2}(1)]
    Corollary \ref{2}(1) follows from the fact that $BGL_1(R_\Psi)$ is connected, so the composition (\ref{eq: owsetrthdhdrerjgej}) lands in the path-component of the unit in $\Omega^\infty R_\Psi$.
\end{proof}
\begin{proof}[Proof of Corollary \ref{2}(3)]
    Corollary \ref{2}(3) follows from the fact that $\eta_*$ is 2-torsion on homotopy groups.
\end{proof}

    Recall from Remark \ref{rmk:A} that since $BGL_1(R_\Psi)$ and $GL_1(R_\Psi)$ correspond to infinite loop spaces, they give rise to connective spectra $bgl_1(R_\Psi)$ and $gl_1(R_\Psi)$ respectively, and that $bgl_1(R_\Psi) \simeq \Sigma gl_1(R_\Psi)$. 

    Applying the $\Sigma^\infty$-$\Omega^\infty$ adjunction to the composition of maps of spaces:
    \begin{equation}
        L \xrightarrow{\xi} BGL_1(R_\Psi) \xrightarrow{\eta_*} GL_1(R_\Psi)
    \end{equation}
    gives a composition of maps of spectra:
    \begin{equation}
        \Sigma^\infty_+ L \xrightarrow{\xi} bgl_1(R_\Psi) \xrightarrow{\eta} gl_1(R_\Psi)
    \end{equation}

\subsubsection*{$p$-localisation}
    Fix $p$ an odd prime. We use two forms of $p$-localisation.
    \begin{enumerate}
        \item A spectrum $Y$ is \emph{$p$-local} if the groups $\pi_*Y$ are uniquely $p$-divisible.\footnote{See \cite[Proposition 2.4]{Bousfield:localisation} for a comparison to a potentially more familiar definition of $p$-locality.}

        \item A space $Y$ is \emph{$p$-local} if, whenever $U \to V$ is a map of spaces which induces an isomorphism on homology with $\bZ_{(p)}$-coefficients, $[V, Y] \to [U, Y]$ is a bijection.
    \end{enumerate}
    There are `$p$-localisation functors' $Y \mapsto Y_p$, from (a).~spectra to $p$-local spectra and (b).~nilpotent spaces to $p$-local spaces, along with (in both cases) a natural transformation $Y \to Y_p$. In both cases, the homotopy groups of $Y_p$ are the $p$-localisations of those of $Y$ (cf. \cite[Section 2]{Bousfield:localisation}, \cite[Theorem 6.1.2]{May-Ponto}). 
    
    Since $\Omega^\infty$ sends spectra to spaces which have nilpotent components and $\Omega^\infty$ preserves homotopy groups for connective spectra, we have:
    \begin{lem}\label{lem: p p}
        $p$-localisation commutes with $\Omega^\infty$. More precisely, if $Y$ is a spectrum, then $\Omega^\infty(Y_p) \simeq (\Omega^\infty Y)_p$.
    \end{lem}
\begin{proof}[Proof of Corollary \ref{2}(2)]
    Consider the homotopy commutative diagram of spectra:
    \begin{equation}\label{eq: irejgpirug0tghsegbsebrg}
        \xymatrix{
            bgl_1(R_\Psi)
            \ar[r]
            \ar[d]
            &
            gl_1(R_\Psi)
            \ar[d]
            \\
            bgl_1(R_\Psi)_p
            \ar[r]
            &
            gl_1(R_\Psi)_p
        }
    \end{equation}
    The bottom horizontal map may be identified with $\eta_*: \Sigma gl_1(R_\Psi)_p \to gl_1(R_\Psi)_p$. Since both these spectra are $p$-local, the action of $\pi_* \bS$ factors through an action by the $p$-local sphere $\pi_* \bS_p$. However, since $\eta$ is 2-torsion, its image in $\pi_* \bS_p$ vanishes, and so this bottom horizontal map is trivial.

    By Lemma \ref{lem: p p} and the above discussion, delooping (\ref{eq: irejgpirug0tghsegbsebrg}) we obtain a homotopy commutative diagram of spaces:
    \begin{equation}
        \xymatrix{
            BGL_1(R_\Psi)
            \ar[r]_{\eta_*}
            \ar[d]
            &
            GL_1(R_\Psi)
            \ar[d]
            \\
            BGL_1(R_\Psi)_p
            \ar[r]_{\mathrm{const}}
            &
            GL_1(R_\Psi)_p
        }
    \end{equation}
    Combining with the following commutative diagram of spaces:
    \begin{equation}
        \xymatrix{
            GL_1(R_\Psi)
            \ar[r]
            \ar[d]
            &
            \Omega^\infty R_\Psi
            \ar[d]
            \ar[dr]
            &
            \\
            GL_1(R_\Psi)_p
            \ar[r]
            &
            (\Omega^\infty R_\Psi)_p
            \ar[r]_-{=}
            &
            \Omega^\infty ((R_\Psi)_p)
        }
    \end{equation}
    (where the arrow labelled $=$ comes from Lemma \ref{lem: p p}) 
    we may conclude that the image of $[\eta\xi]$ in the $p$-localisation of $R^0_\Psi(L)$ agrees with the unit $1$ (since it factors through a constant map) for all odd $p$, and therefore that $1-[\eta\xi] \in R^0_\Psi(L)$ is $2^l$ torsion for some $l$.
\end{proof}

\bibliographystyle{myamsalpha}
\bibliography{Refs.bib}{} 

@misc{PS4,
      title={Bordism from quasi-isomorphism}, 
      author={Noah Porcelli and Ivan Smith},
      year={2025},
      archivePrefix={arXiv},
      primaryClass={math.SG},
    key={PS:qi},
    note={Preprint, available at arXiv:2509.21587}
}

@article {Rognes,
    AUTHOR = {Rognes, John},
     TITLE = {A spectrum level rank filtration in algebraic {$K$}-theory},
   JOURNAL = {Topology},
  FJOURNAL = {Topology. An International Journal of Mathematics},
    VOLUME = {31},
      YEAR = {1992},
    NUMBER = {4},
     PAGES = {813--845},
      ISSN = {0040-9383},
   MRCLASS = {19D99 (18F25 55P42)},
  MRNUMBER = {1191383},
MRREVIEWER = {Roland\ Schw\"anzl},
       DOI = {10.1016/0040-9383(92)90012-7},
       URL = {https://doi.org/10.1016/0040-9383(92)90012-7},
}

@unpublished{AB,
    author = {Abouzaid, Mohammed and Blumberg, Andrew},
    title = {Arnol'd conjecture and {M}orava {$K$}-theory} ,
    note = {Preprint, available at arXiv:2103.01507},
}

@unpublished{PS,
    author = {Porcelli, Noah and Smith, Ivan },
    title = {Bordism of flow modules and exact {L}agrangians} ,
    year = {2024},
    note = {Preprint, available at arXiv:2401.11766},
}

@book {May-Ponto,
    AUTHOR = {May, J. P. and Ponto, K.},
     TITLE = {More concise algebraic topology},
    SERIES = {Chicago Lectures in Mathematics},
      NOTE = {Localization, completion, and model categories},
 PUBLISHER = {University of Chicago Press, Chicago, IL},
      YEAR = {2012},
     PAGES = {xxviii+514},
      ISBN = {978-0-226-51178-8; 0-226-51178-2},
   MRCLASS = {55-02 (16T05 18G55 55P60)},
  MRNUMBER = {2884233},
MRREVIEWER = {Ismar\ Voli\'c},
}

@article {AS:plumbings,
    AUTHOR = {Abouzaid, Mohammed and Smith, Ivan},
     TITLE = {Exact {L}agrangians in plumbings},
   JOURNAL = {Geom. Funct. Anal.},
  FJOURNAL = {Geometric and Functional Analysis},
    VOLUME = {22},
      YEAR = {2012},
    NUMBER = {4},
     PAGES = {785--831},
      ISSN = {1016-443X,1420-8970},
   MRCLASS = {53D37 (53D12)},
  MRNUMBER = {2984118},
MRREVIEWER = {Rafael\ Santamar\'{\i}a},
       DOI = {10.1007/s00039-012-0162-y},
       URL = {https://doi.org/10.1007/s00039-012-0162-y},
}

@unpublished{BDHO,
    author = {Barraud, Jean-Francois and Damian, Mihai and Humili\'ere, Vincent and Oancea, Alexandru},
    title = {Morse homology with {DG} coefficients},
    note = {Preprint, available at arXiv:2308.06104},
}

@unpublished{BDHO2,
    author = {Barraud, Jean-Francois and Damian, Mihai and Humili\'ere, Vincent and Oancea, Alexandru},
    title = {Floer homology with {DG} coefficients: applications to cotangent bundles},
    note = {Preprint, available at arXiv:2404.07953}
}

@article {Barraud-Cornea,
    AUTHOR = {Barraud, Jean-Fran\c{c}ois and Cornea, Octav},
     TITLE = {Lagrangian intersections and the {S}erre spectral sequence},
   JOURNAL = {Ann. of Math. (2)},
  FJOURNAL = {Annals of Mathematics. Second Series},
    VOLUME = {166},
      YEAR = {2007},
    NUMBER = {3},
     PAGES = {657--722},
      ISSN = {0003-486X,1939-8980},
   MRCLASS = {53D40 (55T10)},
  MRNUMBER = {2373371},
MRREVIEWER = {Timothy\ Perutz},
       DOI = {10.4007/annals.2007.166.657},
       URL = {https://doi.org/10.4007/annals.2007.166.657},
}

@article {Bousfield:localisation,
    AUTHOR = {Bousfield, A. K.},
     TITLE = {The localization of spectra with respect to homology},
   JOURNAL = {Topology},
  FJOURNAL = {Topology. An International Journal of Mathematics},
    VOLUME = {18},
      YEAR = {1979},
    NUMBER = {4},
     PAGES = {257--281},
      ISSN = {0040-9383},
   MRCLASS = {55N20 (55N15 55P60)},
  MRNUMBER = {551009},
MRREVIEWER = {Willi\ Meier},
       DOI = {10.1016/0040-9383(79)90018-1},
       URL = {https://doi.org/10.1016/0040-9383(79)90018-1},
}

@misc{Bonciocat,
      title={Floer homotopy theory for monotone Lagrangians}, 
      author={Ciprian Mircea Bonciocat},
      year={2025},
      eprint={2506.17431},
      archivePrefix={arXiv},
      primaryClass={math.SG},
      url={https://arxiv.org/abs/2506.17431}, 
    note={Preprint, available at arXiv:2506.17431},
}

@article {Abouzaid:based_loops,
    AUTHOR = {Abouzaid, Mohammed},
     TITLE = {On the wrapped {F}ukaya category and based loops},
   JOURNAL = {J. Symplectic Geom.},
  FJOURNAL = {The Journal of Symplectic Geometry},
    VOLUME = {10},
      YEAR = {2012},
    NUMBER = {1},
     PAGES = {27--79},
      ISSN = {1527-5256,1540-2347},
   MRCLASS = {53D37 (53D12)},
  MRNUMBER = {2904032},
MRREVIEWER = {Janko\ Latschev},
       DOI = {10.4310/jsg.2012.v10.n1.a3},
       URL = {https://doi.org/10.4310/jsg.2012.v10.n1.a3},
}

@incollection {Bousfield-Friedlander,
    AUTHOR = {Bousfield, A. K. and Friedlander, E. M.},
     TITLE = {Homotopy theory of {$\Gamma $}-spaces, spectra, and
              bisimplicial sets},
 BOOKTITLE = {Geometric applications of homotopy theory ({P}roc. {C}onf.,
              {E}vanston, {I}ll., 1977), {II}},
    SERIES = {Lecture Notes in Math.},
    VOLUME = {658},
     PAGES = {80--130},
 PUBLISHER = {Springer, Berlin-New York},
      YEAR = {1978},
      ISBN = {3-540-08859-8},
   MRCLASS = {55P65 (55P42)},
  MRNUMBER = {513569},
MRREVIEWER = {D.\ W.\ Anderson},
}

@article {HSS,
    AUTHOR = {Hoyois, Marc and Scherotzke, Sarah and Sibilla, Nicol\`o},
     TITLE = {Higher traces, noncommutative motives, and the categorified
              {C}hern character},
   JOURNAL = {Adv. Math.},
  FJOURNAL = {Advances in Mathematics},
    VOLUME = {309},
      YEAR = {2017},
     PAGES = {97--154},
      ISSN = {0001-8708,1090-2082},
   MRCLASS = {14F05 (14F42 18D05 19D55)},
  MRNUMBER = {3607274},
MRREVIEWER = {Satoshi\ Mochizuki},
       DOI = {10.1016/j.aim.2017.01.008},
       URL = {https://doi.org/10.1016/j.aim.2017.01.008},
}

@misc{Konstantinov,
      title={Higher rank local systems in Lagrangian Floer theory}, 
      author={Momchil Konstantinov},
      year={2017},
      eprint={1701.03624},
      archivePrefix={arXiv},
      primaryClass={math.SG},
      url={https://arxiv.org/abs/1701.03624}, 
    note={Preprint, available at arXiv:1701.03624},
}

@incollection {Hovey,
    AUTHOR = {Hovey, Mark},
     TITLE = {Bousfield localization functors and {H}opkins' chromatic
              splitting conjecture},
 BOOKTITLE = {The \v Cech centennial ({B}oston, {MA}, 1993)},
    SERIES = {Contemp. Math.},
    VOLUME = {181},
     PAGES = {225--250},
 PUBLISHER = {Amer. Math. Soc., Providence, RI},
      YEAR = {1995},
      ISBN = {0-8218-0296-8},
   MRCLASS = {55P42 (55N20 55N22 55P60)},
  MRNUMBER = {1320994},
       DOI = {10.1090/conm/181/02036},
       URL = {https://doi.org/10.1090/conm/181/02036},
}

@book {Bousfield-Kan,
    AUTHOR = {Bousfield, A. K. and Kan, D. M.},
     TITLE = {Homotopy limits, completions and localizations},
    SERIES = {Lecture Notes in Mathematics},
    VOLUME = {Vol. 304},
 PUBLISHER = {Springer-Verlag, Berlin-New York},
      YEAR = {1972},
     PAGES = {v+348},
   MRCLASS = {55J05},
  MRNUMBER = {365573},
MRREVIEWER = {Harold\ Hastings},
}

@book {McDuff-Salamon,
    AUTHOR = {McDuff, Dusa and Salamon, Dietmar},
     TITLE = {{$J$}-holomorphic curves and symplectic topology},
    SERIES = {American Mathematical Society Colloquium Publications},
    VOLUME = {52},
   EDITION = {Second},
 PUBLISHER = {American Mathematical Society, Providence, RI},
      YEAR = {2012},
     PAGES = {xiv+726},
      ISBN = {978-0-8218-8746-2},
   MRCLASS = {53D45 (32Q65 53D35)},
  MRNUMBER = {2954391},
MRREVIEWER = {Mark\ Alan\ Branson},
}

@book {Hatcher:book,
    AUTHOR = {Hatcher, Allen},
     TITLE = {Algebraic topology},
 PUBLISHER = {Cambridge University Press, Cambridge},
      YEAR = {2002},
     PAGES = {xii+544},
      ISBN = {0-521-79160-X; 0-521-79540-0},
   MRCLASS = {55-01 (55-00)},
  MRNUMBER = {1867354},
MRREVIEWER = {Donald\ W.\ Kahn},
}

@book {Hirschhorn:book,
    AUTHOR = {Hirschhorn, Philip S.},
     TITLE = {Model categories and their localizations},
    SERIES = {Mathematical Surveys and Monographs},
    VOLUME = {99},
 PUBLISHER = {American Mathematical Society, Providence, RI},
      YEAR = {2003},
     PAGES = {xvi+457},
      ISBN = {0-8218-3279-4},
   MRCLASS = {18G55 (55P60 55U35)},
  MRNUMBER = {1944041},
MRREVIEWER = {David\ A.\ Blanc},
       DOI = {10.1090/surv/099},
       URL = {https://doi.org/10.1090/surv/099},
}

@article {Sagave-Schlichtkrull:Gp,
    AUTHOR = {Sagave, Steffen and Schlichtkrull, Christian},
     TITLE = {Group completion and units in {$\mathcal{I}$}-spaces},
   JOURNAL = {Algebr. Geom. Topol.},
  FJOURNAL = {Algebraic \& Geometric Topology},
    VOLUME = {13},
      YEAR = {2013},
    NUMBER = {2},
     PAGES = {625--686},
      ISSN = {1472-2747,1472-2739},
   MRCLASS = {55P48 (55P43 55U35)},
  MRNUMBER = {3044590},
MRREVIEWER = {Julia\ Bergner},
       DOI = {10.2140/agt.2013.13.625},
       URL = {https://doi.org/10.2140/agt.2013.13.625},
}

@article {Jones:cyc,
    AUTHOR = {Jones, John D. S.},
     TITLE = {Cyclic homology and equivariant homology},
   JOURNAL = {Invent. Math.},
  FJOURNAL = {Inventiones Mathematicae},
    VOLUME = {87},
      YEAR = {1987},
    NUMBER = {2},
     PAGES = {403--423},
      ISSN = {0020-9910,1432-1297},
   MRCLASS = {18F25 (19D10 19D55 58G12)},
  MRNUMBER = {870737},
MRREVIEWER = {Ronnie\ Lee},
       DOI = {10.1007/BF01389424},
       URL = {https://doi.org/10.1007/BF01389424},
}

@article {Schwede:Gamma,
    AUTHOR = {Schwede, Stefan},
     TITLE = {Stable homotopical algebra and {$\Gamma$}-spaces},
   JOURNAL = {Math. Proc. Cambridge Philos. Soc.},
  FJOURNAL = {Mathematical Proceedings of the Cambridge Philosophical
              Society},
    VOLUME = {126},
      YEAR = {1999},
    NUMBER = {2},
     PAGES = {329--356},
      ISSN = {0305-0041,1469-8064},
   MRCLASS = {55P47 (18D10 55P42)},
  MRNUMBER = {1670249},
MRREVIEWER = {Ulrike\ Tillmann},
       DOI = {10.1017/S0305004198003272},
       URL = {https://doi.org/10.1017/S0305004198003272},
}

@book {Quillen:book,
    AUTHOR = {Quillen, Daniel G.},
     TITLE = {Homotopical algebra},
    SERIES = {Lecture Notes in Mathematics},
    VOLUME = {No. 43},
 PUBLISHER = {Springer-Verlag, Berlin-New York},
      YEAR = {1967},
     PAGES = {iv+156 pp. (not consecutively paged)},
   MRCLASS = {18.20 (55.00)},
  MRNUMBER = {223432},
MRREVIEWER = {A.\ K.\ Bousfield},
}

@article {Segal,
    AUTHOR = {Segal, Graeme},
     TITLE = {Categories and cohomology theories},
   JOURNAL = {Topology},
  FJOURNAL = {Topology. An International Journal of Mathematics},
    VOLUME = {13},
      YEAR = {1974},
     PAGES = {293--312},
      ISSN = {0040-9383},
   MRCLASS = {55B20},
  MRNUMBER = {353298},
MRREVIEWER = {J.\ P.\ May},
       DOI = {10.1016/0040-9383(74)90022-6},
       URL = {https://doi.org/10.1016/0040-9383(74)90022-6},
}

@article {Rezk,
    AUTHOR = {Rezk, Charles},
     TITLE = {Every homotopy theory of simplicial algebras admits a proper
              model},
   JOURNAL = {Topology Appl.},
  FJOURNAL = {Topology and its Applications},
    VOLUME = {119},
      YEAR = {2002},
    NUMBER = {1},
     PAGES = {65--94},
      ISSN = {0166-8641,1879-3207},
   MRCLASS = {55U35 (18G30 18G55)},
  MRNUMBER = {1881711},
MRREVIEWER = {Pilar\ C.\ Carrasco},
       DOI = {10.1016/S0166-8641(01)00057-8},
       URL = {https://doi.org/10.1016/S0166-8641(01)00057-8},
}

@article {Sagave-Schlichtkrull:Diagram,
    AUTHOR = {Sagave, Steffen and Schlichtkrull, Christian},
     TITLE = {Diagram spaces and symmetric spectra},
   JOURNAL = {Adv. Math.},
  FJOURNAL = {Advances in Mathematics},
    VOLUME = {231},
      YEAR = {2012},
    NUMBER = {3-4},
     PAGES = {2116--2193},
      ISSN = {0001-8708,1090-2082},
   MRCLASS = {55P43 (55P48)},
  MRNUMBER = {2964635},
MRREVIEWER = {Rekha\ Santhanam},
       DOI = {10.1016/j.aim.2012.07.013},
       URL = {https://doi.org/10.1016/j.aim.2012.07.013},
}

@article {Mandell-May-Schwede-Shipley,
    AUTHOR = {Mandell, M. A. and May, J. P. and Schwede, S. and Shipley, B.},
     TITLE = {Model categories of diagram spectra},
   JOURNAL = {Proc. London Math. Soc. (3)},
  FJOURNAL = {Proceedings of the London Mathematical Society. Third Series},
    VOLUME = {82},
      YEAR = {2001},
    NUMBER = {2},
     PAGES = {441--512},
      ISSN = {0024-6115,1460-244X},
   MRCLASS = {55P42 (18A25 18E30 55U35)},
  MRNUMBER = {1806878},
MRREVIEWER = {Mark\ Hovey},
       DOI = {10.1112/S0024611501012692},
       URL = {https://doi.org/10.1112/S0024611501012692},
}

@article {Hovey-Shipley-Smith,
    AUTHOR = {Hovey, Mark and Shipley, Brooke and Smith, Jeff},
     TITLE = {Symmetric spectra},
   JOURNAL = {J. Amer. Math. Soc.},
  FJOURNAL = {Journal of the American Mathematical Society},
    VOLUME = {13},
      YEAR = {2000},
    NUMBER = {1},
     PAGES = {149--208},
      ISSN = {0894-0347,1088-6834},
   MRCLASS = {55P42 (18D10 18D50 18G30 18G55 55U10 55U35)},
  MRNUMBER = {1695653},
MRREVIEWER = {J.\ P. C. Greenlees},
       DOI = {10.1090/S0894-0347-99-00320-3},
       URL = {https://doi.org/10.1090/S0894-0347-99-00320-3},
}

@misc{ADP,
      title={Spectral equivalence of nearby {L}agrangians}, 
      author={Johan Asplund and Yash Deshmukh and Alex Pieloch},
      year={2024},
      eprint={2411.08841},
      archivePrefix={arXiv},
      primaryClass={math.SG},
      url={https://arxiv.org/abs/2411.08841}, 
    note={Preprint, available at arXiv:2411.08841},
}

@article {Vogt,
    AUTHOR = {Vogt, R. M.},
     TITLE = {Homotopy homomorphisms and the classifying space functor},
   JOURNAL = {J. Homotopy Relat. Struct.},
  FJOURNAL = {Journal of Homotopy and Related Structures},
    VOLUME = {10},
      YEAR = {2015},
    NUMBER = {4},
     PAGES = {875--921},
      ISSN = {2193-8407,1512-2891},
   MRCLASS = {55R35 (55P35 55R37 55U35)},
  MRNUMBER = {3423077},
MRREVIEWER = {Vidhyanath\ K.\ Rao},
       DOI = {10.1007/s40062-014-0087-2},
       URL = {https://doi.org/10.1007/s40062-014-0087-2},
}

@article {FO3:smoothness,
    AUTHOR = {Fukaya, Kenji and Oh, Yong-Geun and Ohta, Hiroshi and Ono,
              Kaoru},
     TITLE = {Exponential decay estimates and smoothness of the moduli space
              of pseudoholomorphic curves},
   JOURNAL = {Mem. Amer. Math. Soc.},
  FJOURNAL = {Memoirs of the American Mathematical Society},
    VOLUME = {299},
      YEAR = {2024},
    NUMBER = {1500},
     PAGES = {v+139},
      ISSN = {0065-9266,1947-6221},
      ISBN = {978-1-4704-7106-4; 978-1-4704-7878-0},
   MRCLASS = {53D35 (35B40 53D40 53D45 58D27)},
  MRNUMBER = {4790260},
       DOI = {10.1090/memo/1500},
       URL = {https://doi.org/10.1090/memo/1500},
    note = {Available at arXiv:1603.07026},
}

@article {Georgieva,
    AUTHOR = {Georgieva, Penka},
     TITLE = {The orientability problem in open {G}romov-{W}itten theory},
   JOURNAL = {Geom. Topol.},
  FJOURNAL = {Geometry \& Topology},
    VOLUME = {17},
      YEAR = {2013},
    NUMBER = {4},
     PAGES = {2485--2512},
      ISSN = {1465-3060,1364-0380},
   MRCLASS = {53D45 (14N35 32Q65 57R17)},
  MRNUMBER = {3110584},
MRREVIEWER = {Sergiy\ Koshkin},
       DOI = {10.2140/gt.2013.17.2485},
       URL = {https://doi.org/10.2140/gt.2013.17.2485},
}

@misc{Bai-Xu,
    title={Arnol'd conjecture over the integers},
author={Bai, Shaoyun and Xu,Guangbo},
year={2022},
eprint={2209.08599},
archivePrefix={arXiv},
primaryClass={math.SG},
url={https://arxiv.org/abs/2209.08599}, 
}

@misc{Sheel:Duality,
      title={Symplectic cohomology and duality for the wrapped Fukaya category}, 
      author={Sheel Ganatra},
      year={2013},
      eprint={1304.7312},
      archivePrefix={arXiv},
      primaryClass={math.SG},
      url={https://arxiv.org/abs/1304.7312}, 
}

@article {Brun,
    AUTHOR = {Brun, M.},
     TITLE = {Topological {H}ochschild homology of {${\bf Z}/p^n$}},
   JOURNAL = {J. Pure Appl. Algebra},
  FJOURNAL = {Journal of Pure and Applied Algebra},
    VOLUME = {148},
      YEAR = {2000},
    NUMBER = {1},
     PAGES = {29--76},
      ISSN = {0022-4049,1873-1376},
   MRCLASS = {16E40 (19D50)},
  MRNUMBER = {1750729},
MRREVIEWER = {Teimuraz\ Pirashvili},
       DOI = {10.1016/S0022-4049(98)00131-5},
       URL = {https://doi.org/10.1016/S0022-4049(98)00131-5},
}

@unpublished{Large,
    author = {Large, Tim},
    title = {Spectral {F}ukaya categories for {L}iouville manifolds},
    note = {MIT PhD Thesis, 2021, available at http://dspace.mit.edu/handle/1721.1/139233}
}

@misc{AAGCKarxiv,
      title={Normal invariant of nearby Lagrangians via twisted derivative}, 
      author={Mohammed Abouzaid and Daniel Alvarez-Gavela and Sylvain Courte and Thomas Kragh},
      year={2025},
      eprint={2505.12515},
      archivePrefix={arXiv},
      primaryClass={math.SG},
      url={https://arxiv.org/abs/2505.12515}, 
    note={Preprint, available at arXiv:2505.12515},
}

@article {Seidel:graded,
    AUTHOR = {Seidel, Paul},
     TITLE = {Graded {L}agrangian submanifolds},
   JOURNAL = {Bull. Soc. Math. France},
  FJOURNAL = {Bulletin de la Soci\'{e}t\'{e} Math\'{e}matique de France},
    VOLUME = {128},
      YEAR = {2000},
    NUMBER = {1},
     PAGES = {103--149},
      ISSN = {0037-9484,2102-622X},
   MRCLASS = {53D40 (53D12 57R58)},
  MRNUMBER = {1765826},
MRREVIEWER = {David\ E.\ Hurtubise},
       URL = {http://www.numdam.org/item?id=BSMF_2000__128_1_103_0},
}

@article {Schlichtkrull,
    AUTHOR = {Schlichtkrull, Christian},
     TITLE = {Units of ring spectra and their traces in algebraic
              {$K$}-theory},
   JOURNAL = {Geom. Topol.},
  FJOURNAL = {Geometry and Topology},
    VOLUME = {8},
      YEAR = {2004},
     PAGES = {645--673},
      ISSN = {1465-3060,1364-0380},
   MRCLASS = {19D55 (19D10 55P43 55P48)},
  MRNUMBER = {2057776},
MRREVIEWER = {Mark\ Hovey},
       DOI = {10.2140/gt.2004.8.645},
       URL = {https://doi.org/10.2140/gt.2004.8.645},
}

@book {Seidel:book,
    AUTHOR = {Seidel, Paul},
     TITLE = {Fukaya categories and {P}icard-{L}efschetz theory},
    SERIES = {Zurich Lectures in Advanced Mathematics},
 PUBLISHER = {European Mathematical Society (EMS), Z\"{u}rich},
      YEAR = {2008},
     PAGES = {viii+326},
      ISBN = {978-3-03719-063-0},
   MRCLASS = {53D40 (16E45 32Q65 53D12)},
  MRNUMBER = {2441780},
MRREVIEWER = {Timothy\ Perutz},
       DOI = {10.4171/063},
       URL = {https://doi.org/10.4171/063},
}

@article {P,
    AUTHOR = {Porcelli, Noah W.},
     TITLE = {Families of relatively exact {L}agrangians, free loop spaces
              and generalised homology},
   JOURNAL = {Selecta Math. (N.S.)},
  FJOURNAL = {Selecta Mathematica. New Series},
    VOLUME = {30},
      YEAR = {2024},
    NUMBER = {2},
     PAGES = {Paper No. 21},
      ISSN = {1022-1824,1420-9020},
   MRCLASS = {53D12 (55N20)},
  MRNUMBER = {4700425},
       DOI = {10.1007/s00029-023-00910-6},
       URL = {https://doi.org/10.1007/s00029-023-00910-6},
}

@PHDTHESIS{deSilva,
    AUTHOR = {de Silva, Vin},
    TITLE = {Products in the Symplectic {F}loer Homology of {L}agrangian Intersections},
    SCHOOL = {University of Oxford},
    YEAR = {1998}
    
}

@misc{AB2,
      title={Foundation of {F}loer homotopy theory {I}: {F}low categories}, 
      author={Mohammed Abouzaid and Andrew J. Blumberg},
      year={2024},
      eprint={2404.03193},
      archivePrefix={arXiv},
      primaryClass={math.SG},
      url={https://arxiv.org/abs/2404.03193}, 
    note={Preprint, available at arXiv:2404.03193},
}

@article {Blumberg-THH,
    AUTHOR = {Blumberg, Andrew J.},
     TITLE = {Topological {H}ochschild homology of {T}hom spectra which are
              {$E_\infty$} ring spectra},
   JOURNAL = {J. Topol.},
  FJOURNAL = {Journal of Topology},
    VOLUME = {3},
      YEAR = {2010},
    NUMBER = {3},
     PAGES = {535--560},
      ISSN = {1753-8416,1753-8424},
   MRCLASS = {55P43 (18G55 19D55 55N20)},
  MRNUMBER = {2684512},
MRREVIEWER = {Birgit\ Richter},
       DOI = {10.1112/jtopol/jtq017},
       URL = {https://doi.org/10.1112/jtopol/jtq017},
}

@misc{PS2,
      title={Spectral {F}loer theory and tangential structures}, 
      author={Noah Porcelli and Ivan Smith},
      year={2024},
      eprint={2411.03257},
      archivePrefix={arXiv},
      primaryClass={math.SG},
      url={https://arxiv.org/abs/2411.03257}, 
note={Preprint, available at arXiv:2411.03257, to appear in \textit{Trans. Amer. Math. Soc.}},
}

\end{document}